		\xpatchcmd{\tableofcontents}{\contentsname \@mkboth}{\large\contentsname \@mkboth}{}{}
	\newcommand\mylistlabel[2][l]{\eqmakebox[listlabel@\EnumitemId][#1]{#2}}
	\setlist[enumerate,1]{
		label=\text{\upshape{(\roman*)}},
	}
	\setlist[itemize,2]{
		wide,
		label=\raisebox{0.3ex}{\tiny$\bullet$},
		leftmargin = 0pt,
		labelindent = 0pt,
	}
	\setlist{
		widestL,
		topsep=1ex,
		itemsep=0pt,
	}
		\def\and{, }%
		\def\footnotemark{}%
		\def\thanks{}%
	\numberwithin{equation}{section}
	\newcommand{\createthmlists}[1]{%
		\newlist{#1enumerate}{enumerate}{1}%
		\setlist[#1enumerate,1]{%
			label = \upshape{(\roman*)},%
			ref = \text{\thetheorem(\roman*)},%
		}%
		\crefalias{#1enumeratei}{#1}%
		\newlist{#1enumerateq}{enumerate}{1}%
		\setlist[#1enumerateq,1]{%
			label = \upshape{(\alph*)},%
			ref = \text{\thetheorem(\alph*)},%
		}%
		\crefalias{#1enumerateqi}{#1}%
		\AtBeginEnvironment{#1}{%
			\expandafter\let\expandafter\enumerate\csname #1enumerate\endcsname
			\expandafter\let\expandafter\endenumerate\csname end#1enumerate\endcsname
			\expandafter\let\expandafter\enumerateq\csname #1enumerateq\endcsname
			\expandafter\let\expandafter\endenumerateq\csname end#1enumerateq\endcsname
		}{}{}%
	}%
	\theoremstyle{plain}
	\newtheorem{theorem}{Theorem}[section]
	\newtheorem{claim}{Claim}
	\newtheorem{assumption}[theorem]{Assumption}
		\Crefname{assumption}{Assumption}{Assumptions}
	\newtheorem{corollary}[theorem]{Corollary}
		\Crefname{corollary}{Corollary}{Corollaries}
	\newtheorem{fact}[theorem]{Fact}
		\Crefname{fact}{Fact}{Facts}
	\newtheorem{lemma}[theorem]{Lemma}
	\theoremstyle{definition}
	\newtheorem{example}[theorem]{Example}
	\newtheorem{remark}[theorem]{Remark}
	\newcommand{\algnamefont}{\sffamily}
	\newcommand{\BY}{{\algnamefont B-adaPG}}
	\newcommand{\FNEa}{{\algnamefont B-adaPG\texorpdfstring{\(_\alpha\)}{a}}}
	\newcommand{\ABPG}{{\algnamefont ABPG}}
	\newcommand{\ABPGg}{\ABPG{\algnamefont-g}}
	\newcommand{\adaPG}{{\algnamefont adaPG}}
	\newcommand{\adaPGq}{\adaPG{\algnamefont\(^{1,\frac{1}{2}}\)}}
	\newcommand{\BaGRA}{{\algnamefont BaGRAAL}}
	\newcommand{\BPG}{{\algnamefont B-PG}}
	\newcommand{\BPGls}{\BPG{\algnamefont-ls}}
	\newcommand{\MDPolyak}{{\algnamefont MD-Polyak}}
	\newcommand{\PG}{{\algnamefont PG}}
	\newcommand{\PGls}{\PG{\algnamefont-ls}}
	\newcommand{\refBPGls}{\texorpdfstring{\hyperref[alg:BPGls]{\BPGls}}{\BPGls}}
	\newcommand{\refBY}{\texorpdfstring{\hyperref[alg:BY]{\BY}}{\BY}}
	\newcommand{\refFNEa}{\texorpdfstring{\hyperref[alg:FNEa]{\FNEa}}{\FNEa}}
	\DeclareSymbolFont{varmathbb}{U}{txmia}{m}{it}
	\DeclareMathSymbol{\mathbbcharN}{\mathord}{varmathbb}{142}
	\DeclareMathSymbol{\mathbbcharR}{\mathord}{varmathbb}{146}
		\newcommand{\I}{\mathop{}\!{\operator@font I}}
	\newcommand{\K}{\mathcal{K}}
	\newcommand{\N}{\mathbbcharN}
	\newcommand{\R}{\mathbbcharR}
	\newcommand{\Rinf}{\overline{\R}}
	\newcommand{\U}{\mathcal{U}}
	\newcommand{\xmark}{\ding{55}}%
	\DeclarePairedDelimiter\norm{\lVert}{\rVert}
	\DeclarePairedDelimiter\abs{\lvert}{\rvert}
	\newcommand{\innprod}[2]{\langle#1,#2\rangle}
	\DeclareMathOperator*{\argmin}{arg\,min}
	\DeclareMathOperator{\diag}{diag}
	\DeclareMathOperator{\dom}{dom}
	\DeclareMathOperator{\epi}{epi}
	\DeclareMathOperator{\indicator}{\delta}
	\DeclareMathOperator{\interior}{int}
	\DeclareMathOperator{\KL}{KL}
	\DeclareMathOperator*{\minimize}{minimize}
	\DeclareMathOperator{\symm}{Sym}
		\newcommand{\prox}{\mathop{}\!\overleftarrow{\operator@font prox}^{\kernel}}
	\newcommand{\kernel}{\phi}
	\DeclareMathOperator{\D}{D}				
	\DeclareMathOperator{\DD}{\Delta}		
	\DeclareMathOperator{\tD}{\tilde{D}}	
	\DeclareFontFamily{U}{boondoxuprscr}{\skewchar \font =45}
	\DeclareFontShape{U}{boondoxuprscr}{m}{n}{
		<-> BOONDOXUprScr-Regular}{}
	\DeclareFontShape{U}{boondoxuprscr}{b}{n}{
		<-> BOONDOXUprScr-Bold}{}
	\DeclareMathAlphabet{\mathboondoxo}{U}{boondoxuprscr}{m}{n}
	\SetMathAlphabet{\mathboondoxo}{bold}{U}{boondoxuprscr}{b}{n}
	\renewcommand\j{\mathboondoxo{j}}
\newcommand{{%
		\tikzexternalenable
		\pgfkeys{/pgf/images/include external/.code={\includegraphics[]{#}}}%
		\tikzsetnextfilename{}
		\input{./TeX/Tikz/.tex}%
	}}[2][]{\includegraphics[#1]{Pics/Tikz/#2.pdf}}
	\title{%
		Linesearch-free adaptive Bregman proximal gradient for convex minimization under local relative smoothness\thanks{%
			P. Latafat is a member of the Gruppo Nazionale per l'Analisi Matematica, la Probabilit\`a e le loro Applicazioni (GNAMPA - National Group for Mathematical Analysis, Probability and their Applications) of the Istituto Nazionale di Alta Matematica (INdAM - National Institute of Higher Mathematics).
			A. Themelis was supported by the JSPS KAKENHI grant number JP24K20737.
		}%
	}
	\author{%
		Hongjia Ou\thanks{%
			Faculty of Information Science and Electrical Engineering (ISEE), Kyushu University,
			744 Motooka, Nishi-ku, Fukuoka 819-0395, Japan.
			{\it E-mails:} {\sf ou.honjia.069@s.kyushu-u.ac.jp}, {\sf andreas.themelis@ees.kyushu-u.ac.jp}%
		}\and
		Puya Latafat\thanks{%
			IMT School for Advanced Studies Lucca.
			{\it E-mail:} {\sf puya.latafat@imtlucca.it}%
		}%
		\and
		Andreas Themelis\footnotemark[2]%
	}
	\date{}
\begin{document}

	\maketitle

	\begin{abstract}
		This paper introduces adaptive Bregman proximal gradient algorithms for solving convex composite minimization problems without relying on global relative smoothness or strong convexity assumptions.
		Building upon recent advances in adaptive stepsize selections, the proposed methods generate stepsizes based on local curvature estimates, entirely eliminating the need for backtracking linesearch.
		A key tool in our analysis is a Bregman generalization of Young's inequality, which allows the control of a critical inner product in terms of the same Bregman distances used in the updates.
		Our theory applies to problems where the differentiable term is merely \emph{locally} smooth relative to a distance-generating function, without requiring the existence of global moduli or symmetry coefficients.
		Numerical experiments demonstrate their competitive performance compared to existing approaches across various problem classes.
	\end{abstract}

	\par\noindent{\bf Keywords.}
		Convex minimization,
		Bregman proximal gradient method,
		local relative smoothness,
		adaptive stepsizes,
		Bregman distance.

	\smallskip
	\par\noindent{\bf AMS subject classifications.}
		65K05,
		90C06,
		90C25,
		90C30,
		49M29.

	\tableofcontents

	\section{Introduction}
		This work considers structured optimization problems of the form
		\[\label{eq:P} \tag{\text{P}}%
			\minimize_{x\in\overline{C}}
			\varphi(x)
			\coloneqq
			f(x) + g(x),
		\]
		where \(\overline{C}\) denotes the closure of \(C \coloneqq \interior\dom\kernel\) for a proper \(1\)-coercive function \(\kernel:\R^n\to\Rinf\) of Legendre type, \(f:\R^n\to\Rinf\), \(g:\R^n\to\Rinf\)  are proper closed convex, while \(f\) is only assumed to be \emph{locally} smooth relative to the \emph{distance-generating function} (dgf).
		By local smoothness relative to \(\kernel\) we refer to the existence, for any compact and convex set \(\K\subset\interior\dom\kernel\), of a constant \(L_{f,\K}^{\kernel}\geq0\) such that \(L_{f,\K}^{\kernel}\kernel-f\) is convex over \(\K\); see \cref{ass:basic}.
		When such a constant \(L_f^{\kernel}\) exists independently of \(\K\), that is such that \(L_f^{\kernel}\kernel-f\) is convex on \(\interior\dom\kernel\), then the usual notion of (global) smoothness of \(f\) relative to \(\kernel\) is recovered.

		For convex \(f\), these definitions are respective generalizations of local and global Lipschitz-smoothness of \(f\), that is, local and global Lipschitz continuity of \(\nabla f\).
		Indeed, these are recovered when \(\kernel=\j\), where with
		\[
			\j\coloneqq\tfrac12\norm{{}\cdot{}}^2
		\]
		we denote the squared Euclidean norm.

		A standard approach for solving \eqref{eq:P} is to use fixed point iterations with the Bregman proximal gradient (BPG) operator
		\begin{equation}\label{eq:BPG}
			x^{k+1}
		=
			\argmin_{w\in\R^n}\left\{
				f(x^k)+\innprod{\nabla f(x^k)}{w-x^k}+g(w)+\tfrac{1}{\gamma_{k+1}}\D_{\kernel}(w,x^k)
			\right\},
		\end{equation}
		where \(\gamma_{k+1}>0\) is the stepsize parameter and
		\begin{equation}\label{eq:D}
			\D_{\kernel}(x, y)
		\coloneqq
			\begin{cases}
				\kernel(x) - \kernel(y) - \innprod{\nabla\kernel(y)}{x - y} & \text{if }(x,y)\in\dom\kernel\times\interior\dom\kernel
			\\
				\infty & \text{otherwise}
			\end{cases}
		\end{equation}
		is the Bregman distance associated to \(\kernel\).

		Noticing that \(\D_{\j}(x,y)=\frac{1}{2}\norm{x-y}^2\) is the squared Euclidean norm, BPG updates \eqref{eq:BPG} reduce to standard proximal gradient iterations when \(\kernel=\j\).
		More generally, a well-chosen kernel \(\kernel\) can naturally encode the feasible region through its domain, blending the benefits of barrier and operator splitting methods.
		Beyond this, the Bregman framework addresses important smoothness limitations.
		In many applications, the differentiable term \(f\) lacks Lipschitz smoothness, ubiquitous requirement for first-order methods, but may exhibit smoothness \emph{relative} to a kernel \(\kernel\) other than the squared Euclidean norm \cite{bauschke2017descent,lu2018relatively}.
		Moreover, even in unconstrained problems and with \(f\) enjoying global Lipschitz smoothness, appropriate kernel selections can yield tighter smoothness parameters, enabling larger stepsizes and faster convergence.

		\subsection{Motivations and related work}
			Under the assumption that \(f\) is globally \(L_f^{\kernel}\)-smooth relative to \(\kernel\), the BPG method enjoys a descent property in terms of the Bregman distance, provided the stepsizes do not exceed \((1 + \alpha)/L_f^{\kernel}\), where \(\alpha\in[0,1]\) is the so-called \emph{symmetry coefficient} \cite{bauschke2017descent}.
			However, reliance on global smoothness and symmetry properties typically leads to unnecessarily small stepsizes and slow convergence in practice.
			This observation is not limited to the Bregman setting, as it also pertains the standard proximal gradient method and first-order algorithms in general.
			For this reason, even when constant stepsizes based on \emph{global} moduli are employable, time-varying selections reflecting the \emph{local} landscape of the problem can significantly improve algorithmic performance.

			\paragraph{Adaptive methods in the Euclidean setting}
				\emph{Backtracking linesearch} is a well-established practice to achieve this; linesearch refers to a trial-and-error procedure that iteratively adjusts the stepsize until a prescribed condition, typically a descent on the cost, is verified.
				These techniques can significantly accelerate convergence by selecting more effective stepsizes, but they incur higher per-iteration costs due to repeated evaluations, until the needed condition is met, thereby leading to slower individual iterations.
				In response to this, in a pioneering contribution, \cite{malitsky2020adaptive} introduced an adaptive stepsize selection strategy for the gradient method based on local estimates of Lipschitz moduli that can be derived from available data.
				This work sparked considerable interest in adaptive first-order methods and inspired a broad line of research, such as the proximal extension of \cite{latafat2024adaptive}, later further refined in several other flavors \cite{malitsky2024adaptive,latafat2024convergence,zhou2025adabb,ou2024safeguarding}.
				Among these developments, \cite{oikonomidis2024adaptive} showed that this class of adaptive methods extends to the local H\"olderian setting, thereby identifying an additional layer of adaptivity to unknown (local) H\"older exponents, while \cite{li2025simple,suh2025adaptive} proposed accelerated variants \`a la Nesterov \cite{nesterov1983method}.
				Interesting developments have also emerged in the stochastic setting \cite{aujol2025stochastic,ji2026stochastic}, as well as in the nonconvex regime \cite{yagishita2025simple}.
				Extensions beyond proximal gradient-type methods have likewise begun to emerge; see, e.g., \cite{jang2026alia} for Douglas-Rachford splitting and ADMM.

			\paragraph{Advances in the Bregman setting}
				All the above-mentioned works are however limited to the standard proximal gradient setup, dubbed ``Euclidean setting'' as it is captured by the choice of \(\kernel=\j\) as the square Euclidean norm.
				Linesearch techniques can be directly extended to Bregman proximal gradient iterations \eqref{eq:BPG}, where they preserve the same advantages and limitations as in the Euclidean case.
				In contrast, generalizing other adaptive schemes to the Bregman setting proves significantly more challenging.
				To the best of our knowledge, the only successful extension in this direction is the \BaGRA{} method proposed in \cite{tam2023bregman}, which adapts the golden-ratio scheme of \cite{malitsky2020golden} to the Bregman context.
				Remarkably, similarly to its predecessor \cite{tam2023bregman} covers a class of hemivariational inequalities broader than composite minimization.
				On the other hand, it requires the Bregman kernel \(\kernel\) to be strongly convex, with the stepsize parameters explicitly dependent on the corresponding modulus of strong convexity.
				More importantly, tailored to our setup this method addresses the \emph{unconstrained} minimization of \(f+g\) assuming that a solution exists within \(\dom\kernel\), and derives its stepsizes from \emph{Euclidean} (Lipschitz) estimates, which are not aligned with the underlying Bregman geometry.
				As we will demonstrate in our simulations, this results in conservative stepsize selections and, consequently, slower convergence in practice.

				Another notable exception is the very recent work \cite{malitsky2026entropic}, which focuses on the \emph{Boltzmann--Shannon entropy} as dgf \(\kernel\) and proposes an adaptive Bregman scheme for solving linear systems.
				In this setup, it generates very large stepsizes \`a la Polyak \cite{polyak1969minimization} which can significantly speedup convergence compared to a standard backtracking routine.
				With a minor modification, the scheme can also be applied to the minimization of Lipschitz-differentiable functions over the positive orthant, provided (a lower bound of) the optimal value is available.

		\subsection{Contribution}
			In this work, we propose two adaptive stepsize selection strategies for Bregman proximal gradient iterations that operate without requiring strong convexity of the Bregman kernel or global Lipschitz smoothness of the differentiable term, thus significantly broadening the scope of applicability of Bregman-based methods.
			A central technical challenge in the Bregman setting arises from controlling inner product terms, which in the Euclidean case are typically bounded using Young's or Cauchy--Schwarz inequalities.
			A key tool in our approach is a Bregman generalization of Young's inequality, which enables a direct and effective handling of inner products in terms of Bregman distances without the need to resort to Euclidean-type bounds and assumptions.
			Several numerical experiments confirm that the proposed methodology is competitive with existing approaches in terms of convergence speed, robustness across problem classes, and employment of large stepsizes, all under very general working assumptions.
			As a byproduct, our proof technique also yields convergence guarantees for Bregman proximal gradient iterations with a standard linesearch, under the same general assumptions.

		\subsection{Preliminaries and notation}
			The set of natural numbers is \(\N\coloneqq\{0,1,2,\dots\}\), while \(\R\), \(\R_{++}\coloneqq(0,\infty)\), and \(\Rinf\coloneqq\R\cup\{\pm\infty\}\) denote the set of real, strictly positive, and extended-real numbers, respectively.
			For \(t\in\R\), we define \([t]_+\coloneqq\max\{t,0\}\).
			We use \(\innprod{{}\cdot{}}{{}\cdot{}}\) to denote the standard inner product on \(\R^n\), and for a symmetric and positive definite \(\R^{n\times n}\) matrix \(Q\), denoted \(Q\in\symm_{++}(\R^n)\), we let \(\norm{x}_Q=\sqrt{\innprod{x}{Qx}}\) be the induced norm.
			In case \(Q\) is the identity matrix, we simply write \(\norm{x}\).
			Given a set \(\mathcal D\subseteq\R^n\), with \(\interior \mathcal D\) we denote its interior.

			With \(\j:\R^n\to\R\) we indicate the square Euclidean norm
			\(
				\j(x)=\frac{1}{2}\norm{x}^2
			\).
			The \emph{domain} and \emph{epigraph} of an extended-real-valued function \(h:\R^n\to\Rinf\) are the sets
			\(
				\dom h
			\coloneqq
				\{x\in\R^n \mid h(x)<\infty\}
			\)
			and
			\(
				\epi h
			\coloneqq
				\{(x, c)\in\R^n\times\R \mid h(x)\leq c\}
			\).
			Function \(h\) is said to be: \emph{proper} if \(h>-\infty\) and \(\dom h\neq\emptyset\); \emph{lower semicontinuous (lsc)} if \(\epi h\) is a closed subset of \(\R^{n+1}\); \emph{1-coercive} if
			\(
				\lim_{\norm{x}\to\infty}\frac{h(x)}{\norm{x}}=\infty
			\).

			The \emph{conjugate} of a proper, lsc, convex function \(h:\R^n\to\Rinf\) is the proper, lsc, convex function \(h^*:\R^n\to\Rinf\) defined by
			\(
				h^*(\xi)
			\coloneqq
				\sup_{x\in\R^n}\{\innprod{\xi}{x}-h(x)\}
			\).
			The \emph{(convex) subdifferential} of \(h\) at \(x\in\dom h\) is the set
			\[
				\partial h(x)
			\coloneqq
				\{u\in\R^n \mid h(x')\geq h(x)+\innprod{u}{x'-x} ~ \forall x'\in\R^n\},
			\]
			while \(\partial h(x)=\emptyset\) for \(x\notin\dom h\).
			\(h\) is differentiable at \(x\) iff \(\partial h(x)\) is a singleton, and in this case one has that \(\partial f(x)=\{\nabla h(x)\}\).

			\subsubsection*{Bregman distance}
				We next list a few known facts related to Bregman distances as in \eqref{eq:D}.

				\begin{fact}[three-point identity {\cite[Lem. 3.1]{chen1993convergence}}]\label{thm:3p}%
					Let \(\kernel:\R^n\to\Rinf\) be a proper lsc convex function differentiable on \(\interior\dom\kernel\).
					For any \(x\in\dom\kernel\), and \(y,z\in\interior\dom\kernel\) the following holds:
					\[
						\D_{\kernel}(x,z)
					=
						\D_{\kernel}(x,y)
						+
						\D_{\kernel}(y,z)
						+
						\innprod{x-y}{\nabla\kernel(y)-\nabla\kernel(z)}.
					\]
				\end{fact}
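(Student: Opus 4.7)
The plan is to verify the identity by direct algebraic expansion of the right-hand side using the definition of the Bregman distance in \eqref{eq:D} with $h$ in place of $\kernel$; no convexity, lower semicontinuity, or other structural property of $h$ is actually required, only that $\nabla h(y)$ and $\nabla h(z)$ be well defined, which is guaranteed by $y,z\in\interior\dom h$. The convexity/lsc hypotheses in the statement merely ensure that $\D$ has its usual meaning as a nonnegative Bregman distance, but they play no role in the algebraic identity itself.

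Concretely, I would first spell out
\[
	\D[h](x,y) + \D[h](y,z)
=
	h(x) - h(y) - \innprod{\nabla h(y)}{x-y}
	+ h(y) - h(z) - \innprod{\nabla h(z)}{y-z},
\]
so that the $\pm h(y)$ terms cancel. I would then add the inner product $\innprod{x-y}{\nabla h(y) - \nabla h(z)}$, splitting it by bilinearity into $\innprod{\nabla h(y)}{x-y} - \innprod{\nabla h(z)}{x-y}$. The first summand cancels $-\innprod{\nabla h(y)}{x-y}$ coming from $\D[h](x,y)$, while the second combines with $-\innprod{\nabla h(z)}{y-z}$ from $\D[h](y,z)$ into $-\innprod{\nabla h(z)}{x-z}$ via $(y-z)+(x-y)=x-z$. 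What remains is exactly $h(x)-h(z)-\innprod{\nabla h(z)}{x-z} = \D[h](x,z)$.

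There is no genuine obstacle here: the argument is a one-line expansion, and the only point worth flagging is bookkeeping of the $\nabla h(y)$ and $\nabla h(z)$ terms so that they recombine correctly via bilinearity of the inner product. The hypotheses $x\in\dom h$ and $y,z\in\interior\dom h$ ensure that every quantity appearing is finite and that both gradients exist, so the identity holds pointwise without any limiting or smoothing argument.
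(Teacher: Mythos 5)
Your proof is correct: the three-point identity is indeed a purely algebraic consequence of the definition \eqref{eq:D}, and your expansion and recombination of the inner-product terms (using $(y-z)+(x-y)=x-z$) is exactly the standard argument, which is also how the cited reference \cite[Lem.~3.1]{chen1993convergence} proceeds. The paper itself gives no proof, stating this as a Fact with a citation, so there is nothing to compare beyond noting that your argument matches the classical one; your side remark that convexity and lower semicontinuity are not needed for the identity itself (only finiteness of $h(x)$ and existence of the two gradients, guaranteed by $x\in\dom h$ and $y,z\in\interior\dom h$) is accurate, those hypotheses serving only to make the Bregman distances meaningful and nonnegative in the rest of the paper.
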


				We say that a proper, lsc, convex function \(\kernel:\R^n\to\Rinf\) is \emph{of Legendre type} (or simply \emph{Legendre}) if it is
				(i) \emph{essentially smooth}, namely differentiable on \(\interior\dom\kernel\neq\emptyset\) and such that \(\norm{\nabla\kernel(x^k)}\to\infty\) whenever \(\interior\dom\kernel\ni x^k\to x\in\dom\kernel\setminus\interior\dom\kernel\), and
				(ii) \emph{essentially strictly convex}, namely strictly convex on every convex subset of \(\dom\partial\kernel\).

				\begin{fact}\label{fact:D}%
					Let \(\kernel:\R^n\to\Rinf\) be of Legendre type, and let \(C\coloneqq\interior\dom\kernel\).
					\begin{enumerate}
					\item \label{fact:D:coercive}%
							{\upshape \cite[Prop. 2.16]{bauschke1997legendre}, \cite[Thm. 3.7(vi)]{bauschke1997legendre}}
							If \(\kernel\) is 1-coercive, then \(\dom\kernel^*=\R^n\) and \(\D_{\kernel}(x,{}\cdot{})\) is coercive for any \(x\in C\).

					\item \label{fact:D:boundary}%
						{\upshape \cite[Thm. 3.8(i)]{bauschke1997legendre}}
						If a sequence \((x^k)_{k\in\N}\) converges to a boundary point of \(C\), then \(\D_{\kernel}(x,x^k)\to\infty\) for any \(x\in C\).

					\item \label{fact:D:solodov}%
						{\upshape \cite[Thm. 2.4]{solodov2000inexact}}
						Suppose that \(\dom\kernel\) is closed and that \(\D_{\kernel}(x^k,y^k)\to0\).
						If either \((x^k)_{k\in\N}\) or \((y^k)_{k\in\N}\) converges, then the other sequence too converges to the same limit.

					\item \label{fact:D:Dphi*}%
						{\upshape \cite[Thm. 26.5]{rockafellar1970convex}}
						The conjugate \(\kernel^*\) is continuously differentiable and strictly convex on \(\interior\dom\kernel^*\), with \(\nabla\kernel^* = \nabla\kernel^{-1}:\interior\dom\kernel^*\to\interior\dom\kernel\).

					\item \label{fact:D:DtoD*}%
						{\upshape \cite[Thm. 3.7(v)]{bauschke1997legendre}}
						\(\D_{\kernel}(x,y) = \D_{\kernel^*}(\nabla \kernel(y), \nabla \kernel(x))\) for any \(x,y\in C\).
					\end{enumerate}
				\end{fact}

				Finally, we introduce the symbol \(\DD_{\kernel}\) to indicate the \emph{symmetrized Bregman distance}
				\begin{equation}\label{eq:DD}
					\DD_{\kernel}(x,y)
				\coloneqq
					\D_{\kernel}(x,y)+\D_{\kernel}(y,x)
				=
					\begin{cases}
						\innprod{\nabla\kernel(x)-\nabla\kernel(y)}{x - y} & \hspace*{-7pt}\text{if } x,y\in\interior\dom\kernel
					\\
						\infty & \hspace*{-7pt}\text{otherwise.}
					\end{cases}
				\end{equation}

	\section{Problem setting and proposed algorithms}
		Problem \eqref{eq:P} will be studied under the following assumptions.

		\begin{assumption}\label{ass:basic}%
			The following hold in \eqref{eq:P}:
			\begin{enumerate}
			\item \label{ass:h}%
				\(C=\interior\dom\kernel\) for a proper, convex, 1-coercive function \(\kernel\) of Legendre type, which is twice continuously differentiable with \(\nabla^2\kernel \succ 0\) on \(C\).
			\item \label{ass:f}%
				\(f:\R^n\to\Rinf\) is proper, convex, lsc, and \emph{locally smooth relative to \(\kernel\)}:
				that is, \(\dom\kernel\subseteq\dom f\), and for every convex and compact set \(\K\subset C\) there exists \(L_{f,\K}^{\kernel}>0\) such that \(L_{f,\K}^{\kernel}\kernel-f\) is convex on \(\K\).
			\item \label{ass:g}%
				\(g:\R^n\to\Rinf\) is proper, lsc, and convex with \(\dom g\cap C\neq\emptyset\).
			\item \label{ass:argmin}%
				A solution exists: \(\argmin_{\overline C}\varphi\neq\emptyset\).
			\end{enumerate}
		\end{assumption}

		Beyond convexity, all these basic requirements on \(f\) and \(g\) are virtually negligible.
		The local relative smoothness in \cref{ass:f} is tantamount to saying that \(f\) is differentiable on \(C\),\footnote{%
			See \cite[Prop. 3.7]{wang2024mirror} or \cite[Prop. 2.5]{ahookhosh2021bregman}.
		}
		and that
		\begin{equation}\label{eq:Df<D}
			\D_f(x, y)\leq L_{f,\K}^{\kernel}\D_{\kernel}(x, y)
		\quad
			\forall x,y\in\K,
		\end{equation}
		which simplifies to local Lipschitz continuity of \(\nabla f\) when \(\kernel=\tfrac{1}{2}\norm{x}^2\).

		\begin{remark}\label{rem:local}%
			The locality notion of \cref{ass:f} is weaker than the more natural local relative smoothness used, for instance, in the backtracking analysis of \cite[\S5.3]{li2018learning} which entails the existence of a modulus on every \emph{bounded} subset of \(\interior\dom\kernel\).
			Since such sets may accumulate at the boundary of \(\dom\kernel\), that condition implicitly enforces uniform control up to the boundary.
			In contrast, our assumption only requires control on \emph{compact} subsets and therefore allows the relative smoothness modulus to blow up near the boundary.
			\hfill\qedsymbol
		\end{remark}

		Indeed, when \(\nabla^2\kernel(x)\succ0\) as in \cref{ass:h}, any convex function \(f\) which is twice differentiable on \(C\) satisfies \cref{ass:f}, even if exhibiting an infinite slope at boundary points of \(C\).\footnote{%
			The assumption \(\nabla^2\kernel(x)\succ0\) prevents pathological situations far from the boundary: for instance, when \(\nabla^2\kernel(\bar x)=0\) while \(\nabla^2 f(\bar x)\neq0\), there exists no \(L\) such that \(L\kernel-f\) is convex in a neighborhood of \(\bar x\), having \(\nabla^2(L\kernel-f)(\bar x)=-\nabla^2 f(\bar x)\not\succeq0\) in this case.
		}

		\begin{example}\label{ex:locrelsmooth}%
			Let \(\kernel(x)=\sum_{i=1}^n(x_i\ln x_i-x_i)\) be the \emph{Boltzmann--Shannon entropy} on \(\R_+^n\).
			Any convex function that is twice differentiable on \(\R_{++}^{n}\) is locally smooth relative to \(\kernel\) as in \cref{ass:f}, despite the fact that there may exist no \(L\) such that \(L\kernel-f\) is convex around points on the boundary of \(\R_+^n\).
			\hfill\qedsymbol
		\end{example}

		As we detail in the following subsection, the basic requirements listed in \cref{ass:basic} are enough to guarantee that the proposed adaptive stepsize selection strategies produce iterates \(x^k\) such that \(\inf_{k\in\N}\varphi(x^k)=\inf_{\overline C}\varphi\).
		Slightly more can be said upon assuming that the Bregman distance generated by \(\kernel\) satisfies the following mild additional assumption.

		\begin{assumption}[Bregman with zone \(C\) {\cite[Def. 2.1]{solodov2000inexact}}]\label{ass:zone}%
			The dgf \(\kernel\) satisfies the following:
			\begin{enumerate}
			\item \label{ass:Dhto0}%
				\(\D_{\kernel}(x, x^k) \to 0\) whenever \(C \ni x^k \to x\) (in particular, \(\dom\kernel\) is closed).
			\item \label{ass:xkbounded}%
				\(\D_{\kernel}(x,{}\cdot{})\) is level bounded for any \(x\in\overline{C}\setminus C\).
			\end{enumerate}
		\end{assumption}

		\Cref{ass:zone} holds vacuously whenever \(\kernel\) has full domain \(\R^n\).
		More generally, it is a standard requirement satisfied by many separable kernels used in practice; see for instance \cite[\S6.1]{bauschke1997legendre}.
		The nonseparable case is more intricate, as \cite[Ex. 6.9]{bauschke1997legendre} showcases.
		We complement that example with a more practically relevant choice of \(\kernel\) for which \cref{ass:zone} fails.

		\begin{example}\label{ex:zone}%
			Consider the 1-coercive and Legendre function \(\kernel:\R^n\to\Rinf\) given by
			\[
				\kernel(x)
			=
				\begin{cases}
					-\sqrt{1-\norm{x}^2} & \text{if } \norm{x}\leq1
				\\
					\infty & \text{otherwise.}
				\end{cases}
			\]
			For \(n=1\), this yields the ``Hellinger divergence'' which complies with \cref{ass:zone}, and likewise so do the separable extensions \(x\mapsto\sum_{i=1}^N\phi(x_i)\) for any \(N\in\N\) \cite[Ex. 6.4]{bauschke1997legendre}.
			However, for \(n\geq2\) this is no longer the case.
			To see this, for \((x,y)\in\dom\kernel\times\interior\dom\kernel\) note that
			\[
				\D_\kernel(x,y)
			=
				\tfrac{1-\innprod{x}{y}}{\sqrt{1-\norm{y}^2}}
				-
				\sqrt{1-\norm{x}^2}.
			\]
			Let \(x=(1,0,\dots,0)\in\dom\kernel\) and \(y_t=\bigl(t,\sqrt{(1-t^2)(1-(1-t)^2)},0,\dots,0\bigr)\in\interior\dom\kernel\) for \(t\in[0,1)\); then, as \(t\to 1^-\) one has that \(y_t\to x\), yet not only isn't
			\[
				\D_\kernel(x,y_t)
			=
				\tfrac{1-t}{\sqrt{1-t^2-(1-t^2)(1-(1-t)^2)}}
			=
				\tfrac{1}{\sqrt{1-t^2}}
			\]
			vanishing, it actually diverges.
			\hfill\qedsymbol
		\end{example}

		\subsection{Local moduli estimates}\label{sec:estimates}%
			Our approach builds upon the Euclidean analyses of \cite{latafat2024adaptive,latafat2024convergence}, and more generally follows the ``self-adaptive'' rationale of generating stepsizes solely based on past available data, without resorting to inner loops or requiring existence (or knowledge thereof) of any global modulus.
			However, the involvement of Bregman geometry brings forth several challenges that do not allow for straightforward extensions of these works.

			Each iteration revolves around three local estimates: two are Lipschitz-like estimates for \(\nabla f\) and for the \emph{forward operator}
			\begin{equation}\label{eq:Hk}
				H_k\coloneqq\nabla\kernel-\gamma_k\nabla f,
			\end{equation}
			and one measuring the gap between \(\D_{\kernel}(x,y)\) and \(\D_{\kernel}(y,x)\) at specific points.
			This latter measure is superfluous in Euclidean analyses, since the quadratic function \(\kernel=\j\) enjoys complete symmetry.

			\begin{subequations}
				\paragraph{Differentiable function \(f\)}
					First, based on \eqref{eq:Df<D},
					\begin{equation}\label{eq:lk}
						\ell_k
					\coloneqq
						\frac{\DD_f(x^k,x^{k-1})}{\DD_{\kernel}(x^k,x^{k-1})}
					\end{equation}
					provides an estimate of the relative smoothness modulus on the line segment between the last two consecutive iterates \(x^{k-1}\) and \(x^k\).
					This is the obvious counterpart of (the inverse of) a Barzilai-Borwein stepsize \cite{barzilai1988two}, and the local Lipschitz estimate of \(\nabla f\) used in \cite{latafat2024adaptive} for the Euclidean case, which indeed matches \eqref{eq:lk} when \(\kernel=\j\).

				\paragraph{Forward operator \(H_k\)}
					Inferring a Bregman equivalent of a local Lipschitz estimate of the forward operator \eqref{eq:Hk} is not as immediate.
					Indeed, replacing \(\norm{H_k(x^k)-H_k(x^{k-1})}^2\) with, say, a Bregman term \(\D_{\kernel^*}(H_k(x^k),H_k(x^{k-1}))\) does not seem to lead to quantities that naturally arise in the analysis.
					Our solution is more convoluted, and specifically given by
					\begin{equation}\label{eq:Lamk}
						\Lambda_{k,\delta}
					\coloneqq
						\frac{
							2\D_{\kernel^*}\bigl(\nabla\kernel(x^k)+\delta\bigl[H_k(x^k)-H_k(x^{k-1})\bigr],\nabla\kernel(x^k)\bigr)
						}{
							\delta^2\DD_{\kernel}(x^k,x^{k-1})
						}
					\end{equation}
					depending on some parameter \(\delta>0\) (specified later).
					Despite its deceptive intricacy, when \(\kernel=\j\) is quadratic, and thus so is its conjugate \(\kernel^*\), this estimate recovers the (square) Lipschitz estimate \(\tfrac{\norm{H_k(x^k)-H_k(x^{k-1})}^2}{\norm{x^k-x^{k-1}}^2}\)
					of \cite[Lem. 2.1(ii)]{latafat2024adaptive}, independently of the parameter \(\delta\).
					However, a judicious choice of \(\delta\) will be crucial for our convergence analysis in the generality of \cref{ass:basic}.
					The expression \eqref{eq:Lamk} for more general \(\kernel\) owes to the following Bregman extension of Young's inequality
					\[
						\innprod{x-y}{v}
					\leq
						\tfrac{1}{\delta}
						\D_{\kernel}(x,y)
						+
						\tfrac{1}{\delta}
						\D_{\kernel^*}\bigl(\nabla\kernel(y)+\delta v,\nabla\kernel(y)\bigr),
					\]
					see \cref{thm:BY} for a precise statement.

				\paragraph{Bregman kernel \(\kernel\)}
					Lastly, a local \emph{symmetry coefficient}
					\begin{equation}\label{eq:ak}
						\alpha_k
					\coloneqq
						\frac{\D_{\kernel}(x^k,x^{k-1})}{\D_{\kernel}(x^{k-1},x^k)}
					\in
						(0,\infty)
					\end{equation}
					allows us to express
			\end{subequations}
			\begin{equation}
				\DD_{\kernel}(x^k,x^{k-1})
			=
				\tfrac{1+\alpha_k}{\alpha_k}\D_{\kernel}(x^k,x^{k-1}).
			\end{equation}
			Note that \(\alpha_k>0\) holds for any \(k\) by essential strict convexity of \(\kernel\), regardless of whether or not \(\kernel\) has a \emph{global} (strictly positive) \emph{symmetry coefficient}
			\begin{equation}\label{eq:alpha}
				\alpha(\kernel)
			\coloneqq
				\inf_{\substack{x,y\in\interior\dom\kernel\\x\neq y}}\frac{\D_{\kernel}(x,y)}{\D_{\kernel}(y,x)}.
			\end{equation}
			Even when it does, a symmetry coefficient based on the global landscape of \(\kernel\) may be excessively conservative; instead, the use of local estimates enables the adoption of tighter constants, ultimately leading to larger stepsizes for Bregman proximal gradient iterations \eqref{eq:BPG}.
			The interested reader is referred to the recent work \cite{nilsson2025symmetry} for an in-depth analysis of the coefficient \(\alpha(\kernel)\) for a certain class of Bregman kernels \(\kernel\).

		\subsection{Proposed algorithms and main results}\label{sec:results}%
			Based on these three quantities, choose a stepsize \(\gamma_{k+1}\) and proceed with a Bregman proximal gradient update \eqref{eq:BPG}.
			We propose the following two options, where we let
			\[
				\rho_{k+1}
			\coloneqq
				\tfrac{\gamma_{k+1}}{\gamma_k}
			\]
			denote the ratio of consecutive stepsizes (so that \(\gamma_{k+1}=\rho_{k+1}\gamma_k\)):

			\medskip
			\par\noindent\phantomsection \label{alg:BY}%
				\parbox[t]{0.15\linewidth}{\bf\BY}%
				\hfill
				\fbox{\parbox[t]{0.82\linewidth}{%
					set \(\hat\rho_{k+1}=\sqrt{1+\rho_k}\) and \(\delta=2\hat\rho_{k+1}\), and update
					\begin{equation}\label{eq:BY:rhok*}
						\rho_{k+1}
					=
						\min\left\{
							\hat\rho_{k+1}
						,
							\frac{\alpha_k}{1+\alpha_k}
							\frac{
								1
							}{
								2\hat\rho_{k+1}
								\left[
									\Lambda_{k,\delta}
									-
									(1-\gamma_k\ell_k)
								\right]_+
							}
						\right\}
					\end{equation}
				}}%

			\medskip
			\noindent
			and, in case \(\kernel\) enjoys a symmetry coefficient \(\alpha=\alpha(\kernel)>0\),
			\medskip

			\par\noindent\phantomsection \label{alg:FNEa}%
				\parbox[t]{0.15\linewidth}{%
					{\bf\FNEa}

					\footnotesize
					(if \(\alpha(\kernel)>0\))%
				}
				\hfill
				\fbox{\parbox[t]{0.82\linewidth}{%
					set \(\hat\rho_{k+1}=\sqrt{\frac{1+\alpha}{2}+\rho_k}\) and \(\delta=\frac{2}{1+\alpha}\hat\rho_{k+1}\), and update
					\begin{equation}\label{eq:FNEa:rhok*}
						\rho_{k+1}
					=
						\min\left\{
							\hat\rho_{k+1}
						,
							\frac{\alpha}{
								2\hat\rho_{k+1}
								\left[
									\Lambda_{k,\delta}
									-
									(1-\gamma_k\ell_k)
								\right]_+
							}
						\right\}.
					\end{equation}
				}}%

			\bigskip
			\begin{remark}\label{rem:1/0}%
				We use the convention that \(\frac{1}{0}=\infty\), and remind that \([t]_+=\max\{0,t\}\).
				In particular, whenever \(\Lambda_{k,\delta}\leq1-\gamma_k\ell_k\) all updates reduce to \(\rho_{k+1}=\hat\rho_{k+1}\).
				It is implied that a starting point \(x^0\in C\) should be provided, as well as two stepsizes \(\gamma_0,\gamma_1>0\) for the first two iterations.
				We refer the reader to \cref{sec:simulation:BadaPG} for a practical initialization strategy.
				\hfill\qedsymbol
			\end{remark}

			The following theorem collects the main results for Bregman proximal gradient iterations \eqref{eq:BPG} with stepsizes selected according to the above rules.

			\begin{center}
			\fbox{\parbox{0.975\linewidth}{%
				\begin{theorem}[summary of main results]\label{thm:main}%
					Suppose that \cref{ass:basic} holds, and consider the iterates generated by \refBY{}.
					Then, one always has that
					\begin{equation}\label{eq:main:inf}
						\smash{
							\inf_{k\in\N}\varphi(x^k)=\inf_{\overline{C}}\varphi.
						}
					\end{equation}
					Moreover,
					\begin{enumerate}
					\item \label{thm:main:C}%
						If \(C\cap\argmin_{\overline{C}}\varphi\neq \emptyset\) (equivalently, if \(C\cap\argmin\varphi\neq\emptyset\)), then there exists \(x^\star\in C\cap \argmin\varphi\) such that \(x^k\to x^\star\).
					\item \label{thm:main:zone}%
						If \cref{ass:zone} holds, then \((x^k)_{k\in\N}\) is bounded and admits a unique optimal limit point.
					\end{enumerate}
					When \(\kernel\) has symmetry coefficient \(\alpha>0\), the same is true for \refFNEa.
				\end{theorem}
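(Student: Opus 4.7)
The plan is to establish a Lyapunov-type descent inequality from which all three conclusions follow. The starting point is the standard one-step analysis for Bregman proximal gradient iterations. Using the optimality condition of the minimization defining $\xk*$, together with a subgradient inequality for $g$ at $\xk*$, the three-point identity \cref{thm:3p}, and convexity of $f$, one derives for any $w\in\dom g\cap C$ the relation
\[
	\varphi(\xk*)-\varphi(w)
\leq
	\D_f(\xk*,\xk)
	+
	\tfrac{1}{\gamk*}\bigl[\D(w,\xk)-\D(w,\xk*)-\D(\xk*,\xk)\bigr].
\]
The central difficulty is that the ``forward'' quantity $\D_f(\xk*,\xk)$ involves the not-yet-computed iterate and cannot be bounded directly via the past data $\lk$, which only controls the symmetrized $\DD_f(\xk,\xk')$. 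This is precisely where the novelty of the paper enters.

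The next step is to rewrite $\D_f(\xk*,\xk)$ by splitting through $\xk'$ via the three-point identity applied to $f$, producing terms involving $\DD_f(\xk,\xk')$ (controlled by $\lk\DD(\xk,\xk')$) plus an inner product of the form $\innprod{\xk*-\xk}{\nabla f(\xk)-\nabla f(\xk')}$, which can be rewritten in terms of $\Hk(\xk)-\Hk(\xk')$ through the definition \eqref{eq:Hk}. Invoking the Bregman Young inequality (\cref{thm:BY}) with the specific parameter $\delta$ prescribed by each algorithm bounds this inner product by a combination of $\D(\xk*,\xk)$ and the conjugate-side expression that defines $\Lamk$. Using \eqref{eq:ak} to pass from $\DD(\xk,\xk')$ to $\D(\xk,\xk')$ via the factor $\tfrac{1+\ak}{\ak}$, and collecting all terms, one obtains an inequality of the form
\[
	\gamk*[\varphi(\xk*)-\varphi(w)]
	+
	\D(w,\xk*)
	+
	c_{k+1}\D(\xk*,\xk)
\leq
	\D(w,\xk)
	+
	c_k\D(\xk,\xk')
\]
for appropriate non-negative coefficients $c_k$ depending on $\rhok$ and the local estimates. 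The stepsize rule \eqref{eq:BY:rhok*} (respectively \eqref{eq:FNEa:rhok*}) is reverse-engineered so that the coefficient in front of $\D(\xk*,\xk)$ remains non-negative, and the choice $\rhok*[\hat]=\sqrt{1+\rhok}$ ensures a self-consistent recursion for which $c_k$ cannot blow up. I expect this compatibility between $\rhok*[\hat]$, the Bregman Young parameter $\delta$, and the local symmetry $\ak$ to be the main technical obstacle, as it is the algebraic identity that justifies the precise form of the stepsize rule.

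Given such a descent, a standard telescoping argument yields summability of the residuals and of $\gamk*[\varphi(\xk*)-\varphi(w)]_+$ for every $w\in\dom g\cap C$. Combined with a lower bound on the stepsize sequence (which follows from the $\sqrt{1+\rhok}$-update preventing shrinkage toward zero), this produces $\inf_{k\in\N}\varphi(\xk)\leq\varphi(w)$ for any admissible $w$, and density of $\dom g\cap C$ in $\overline{C}\cap\dom\varphi$ together with lower semicontinuity yield \eqref{eq:main:inf}. For claim \ref{thm:main:C}, taking $w=x^\star\in C\cap\argmin\varphi$ turns the descent into a Fejér-like monotonicity statement for $\D(x^\star,\xk)$, forcing this quantity to converge; combined with \cref{fact:D}\ref{fact:D:coercive} and the essential strict convexity of $\kernel$, one extracts a unique optimal limit point $x^\star$ and shows $\xk\to x^\star$. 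For claim \ref{thm:main:zone}, \cref{ass:zone}\ref{ass:xkbounded} upgrades the finiteness of $\D(x^\star,\xk)$ for any minimizer $x^\star\in\overline C\setminus C$ into boundedness of $\seq{\xk}$, while \ref{ass:Dhto0} allows identifying any cluster point as optimal and arguing its uniqueness through the monotonicity of the Lyapunov function. The case of \refFNEa{} is analogous, replacing $\ak$ by the uniform lower bound $\alpha$ throughout.
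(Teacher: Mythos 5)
Your high-level framework captures the right ingredients — a one-step descent from the BPG optimality condition, splitting the $f$-curvature through $\xk'$, the Bregman--Young bound for the resulting inner product, the local symmetry $\ak$ to pass between $\DD$ and $\D$, a merit function, telescoping — and this is indeed broadly how the paper organizes its analysis (cf.\ \cref{thm:eq}, \cref{thm:BY}, \cref{thm:BY:leq}, \cref{thm:descent}). There are minor algebraic discrepancies (a sign on the bracket $\D(w,\xk)-\D(w,\xk*)-\D(\xk*,\xk)$; the paper's Lyapunov quantity $\UkBY$ carries an explicit $\gamk(1+\rhok[\hat])P_{k-1}(x)$ term, not only Bregman distances), but these are reconcilable.

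However, there is one genuine gap that undermines the way you derive \eqref{eq:main:inf} and (i). You assert that ``a lower bound on the stepsize sequence \ldots follows from the $\sqrt{1+\rhok}$-update preventing shrinkage toward zero.'' This is false: the stepsize rule \eqref{eq:BY:rhok*} takes a \emph{minimum}, and the $\sqrt{1+\rhok}$ branch only caps the ratio from above; the second branch, driven by $[\Lamk-(1-\gamk\lk)]_+$, can freely push $\gamk$ toward zero. The paper explicitly does \emph{not} claim a universal stepsize lower bound (see the remark after \cref{thm:main}, which confines such a bound to the setting of \cref{thm:main:C} and calls it ``tedious'' and ``conservative''). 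What is actually proved is only that $\gamk\not\to 0$, and this takes real work: one first uses the merit function to bound the iterates inside a compact $\K\subset C$ (which already requires a $w\in C$ with $\varphi(w)\leq\inf_k\varphi(\xk)$ — i.e.\ condition \eqref{eq:xinC} of \cref{thm:xinC}), then invokes \cref{thm:Lamkto1} to show $\Lamk\to 1$ along vanishing stepsizes so that the second branch of the minimum eventually becomes infinite, yielding a contradiction. This creates a circularity in your plan: for the general \eqref{eq:main:inf} no such $w$ is a priori available, which is why the paper proves \cref{thm:xinC} conditionally first and then establishes \eqref{eq:main:inf} by contradiction via \cite[Prop.~11.1(iv)]{bauschke2017convex}. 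Your density argument can replace that contradiction \emph{only after} $\sum_k\gamk=\infty$ is secured — which is exactly what is missing.

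A smaller but still substantive gap concerns the convergence $\xk\to x^\star$ in item (i). This is \emph{not} a routine Fej\'er-monotonicity corollary. The merit function $\UkBY(x^\star)$ contains $\gamk(1+\rhok[\hat])P_{k-1}(x^\star)$ and a $\D(\xk,\xk')$ term, so its limit being positive is not immediately excluded; the paper has to show $\UkBY(x^\star)\to 0$, which requires a delicate two-case dichotomy on whether the stepsizes along converging subsequences stay bounded (then \cref{lem:prox:property:optimal} is used) or diverge (then a bespoke subsequence $\set{k_i}$ is built via \eqref{eq:ki+} and the boundedness of $\Lambda_{k_i-2,\cdot}$ is established). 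Essential strict convexity gives \emph{uniqueness} of the optimal limit point, but not that the whole sequence converges, without that additional argument.

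In short: the descent/merit-function skeleton is on target, but the proof of ``$\gamk$ bounded below'' is wrong as stated, and the conditional structure (prove \cref{thm:xinC} under hypothesis \eqref{eq:xinC}, then use contradiction for the unconditional \eqref{eq:main:inf}, then use \cref{ass:zone} for (ii)) and the $\U_k(x^\star)\to 0$ argument are both missing from your plan.
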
%
			}}%
			\end{center}

			Any Legendre kernel \(\kernel\) with a non-open domain necessarily satisfies \(\alpha(\kernel)=0\) \cite[Prop. 2]{bauschke2017descent}.
			Consequently, the statement of \cref{thm:main:zone} is only nontrivial for the \refBY{} variant: a positive symmetry coefficient \(\alpha(\kernel)>0\), combined with the domain closedness required by \cref{ass:zone}, forces \(\dom\kernel=\R^n\).
			In this case, the stronger conclusion of \cref{thm:main:C} applies instead.

			Although the general theoretical results are weaker than in the Euclidean setting, our proposed methods demonstrate significant practical advantages.
			The numerical simulations in \cref{sec:simulations} confirm that the adaptive choices \refBY{} and \refFNEa{} enable larger stepsizes (even on average) and exhibit faster convergence, outperforming even aggressive linesearch strategies.
			Providing a firm theoretical basis for this observed performance, similar to what has been established in the Euclidean case, remains a compelling direction for future research.

			\begin{remark}\label{rem:eps}%
				Concerning \cref{thm:main:zone}, it can be shown that the entire sequence converges up to multiplying the right-hand side of \eqref{eq:BY:rhok*} by \((1-\epsilon)\) for some \(0<\epsilon\ll1\).
				Under \cref{ass:zone}, this slight modification of the stepsize rule guarantees that the sequences \(\bigl(\gamma_k(\varphi(x^k) - \inf_{\overline{C}}\varphi)\bigr)_{k\in\N}\) and \(\bigl(\D_\kernel(x^k,x^{k-1})\bigr)_{k\in\N}\) both converge to zero.
				This fact follows from a simple telescoping argument on \eqref{eq:UkBY:leq}, from which sequential convergence can be established arguing as in the proof of \cref{thm:xinC}; see \cref{proof:rem:eps} for the details.
				We conjecture that the same convergence result holds without this modification of the stepsize update rule, but a formal proof of this fact remains an open problem.
				\hfill\qedsymbol
			\end{remark}

			\subsubsection{Comparison with Euclidean methods}
				The \refBY{} variant can be interpreted as a Bregman analogue of the \adaPG{} update of \cite{latafat2024adaptive}, which reads
				\[
					\rho_{k+1}^{\text{\adaPG}}
				=
					\min\left\{
						\hat\rho_{k+1}
					,~
						\tfrac{1}{
							2\sqrt{\left[
								\Lambda_k
								-
								(1-\gamma_k\ell_k)
							\right]_+}
						}
					\right\}.
				\]
				Here, \(\ell_k\) and \(\Lambda_k\) are as in \eqref{eq:lk} and \eqref{eq:Lamk} with \(\kernel=\j\) (as already mentioned, in this case the latter is independent of the parameter \(\delta\) and is thus omitted from the subscript).
				On the other hand, with \(\kernel=\j\) (hence \(\alpha_k = 1\)) \eqref{eq:BY:rhok*} reads
				\begin{align*}
					\rho_{k+1}^{\text{\BY}}
				={} &
					\min\left\{
						\hat\rho_{k+1}
					,~
						\tfrac{
							1
						}{
							4\hat\rho_{k+1}
							\left[
								\Lambda_k
								-
								(1-\gamma_k\ell_k)
							\right]_+
						}
					\right\}
				\\
				\leq{} &
					\min\left\{
						\hat\rho_{k+1}
					,~
						\tfrac{
							1
						}{
							2\sqrt{
							\left[
								\Lambda_k
								-
								(1-\gamma_k\ell_k)
							\right]_+}
						}
					\right\}
				=
					\rho_{k+1}^{\text{\adaPG}},
				\end{align*}
				thus introducing a slight conservatism over the Euclidean predecessor; the inequality owes to the fact that \(\rho_{k+1}\leq\hat\rho_{k+1}\), hence that \(\rho_{k+1}^2\leq\rho_{k+1}\hat\rho_{k+1}\).

				Similarly, the \FNEa{} variant with \(\kernel=\j\) (hence \(\alpha=1\)) simplifies to a slightly more conservative variant of the update
				\[
					\rho_{k+1}^{\text{\adaPG}^{1,\frac{1}{2}}}
				\coloneqq
					\min\left\{
						\sqrt{1+\rho_k}
					,
						\tfrac{1}{
							\sqrt{
								2
								\left[
									\Lambda_{k,\delta}
									-
									(1-\gamma_k\ell_k)
								\right]_+
							}
						}
					\right\}
				\]
				of \cite[\adaPG$^{1,\frac{1}{2}}$]{latafat2024convergence}, having
				\[
					\rho_{k+1}^{\text{\FNEa}}
				=
					\min\left\{
						\hat\rho_{k+1}
					,
						\tfrac{1}{
							2\hat\rho_{k+1}
							\left[
								\Lambda_{k,\delta}
								-
								(1-\gamma_k\ell_k)
							\right]_+
						}
					\right\}
				\leq
					\rho_{k+1}^{\text{\adaPG}^{1,\frac{1}{2}}}
				\]
				(in all occurrences throughout this subsubsection, \(\hat\rho_{k+1}=\sqrt{1+\rho_k}\)).

				\begin{remark}[quadratic kernels]%
					As explained in \cref{sec:estimates}, for general \(\kernel\) the curvature estimate \(\Lambda_{k,\delta}\) as in \eqref{eq:Lamk} depends on the Bregman--Young parameter \(\delta>0\).
					In exisiting Euclidean analyses, this parameter is optimally chosen as a suitable multiple of the ratio \(\rho_{k+1}=\gamma_{k+1}/\gamma_k\), a feasible choice given that the value of \(\Lambda_{k,\delta}\) is independent of \(\delta\).
					This is not the case for more general kernels \(\kernel\), whence the above-discussed conservatism originates: the value of \(\gamma_{k+1}\) depends on \(\Lambda_{k,\delta}\), and should \(\Lambda_{k,\delta}\) in turn depend on \(\gamma_{k+1}\) a circular dependency would arise.

					Specializing \refFNEa{} to quadratic \(\kernel=\tfrac{1}{2}\norm{{}\cdot{}}_Q^2\) with \(Q\in\symm_{++}(\R^n)\), this issue does not persist and the tighter analyses of the Euclidean cases are recovered.
					The corresponding algorithm produces iterates
					\begin{gather*}
						x^{k+1}
					=
						\argmin_{x\in\R^n}\left\{
							\innprod{\nabla f(x^k)}{x-x^k}
							+
							g(x)
							+
							\tfrac{1}{2\gamma_{k+1}}\norm{x-x^k}_Q^2
						\right\}
					\shortintertext{with stepsizes chosen as}
						\gamma_{k+1}
					=
						\gamma_k\min\biggl\{
							\sqrt{1+\tfrac{\gamma_k}{\gamma_{k-1}}}
						,
							\tfrac{1}{
								\sqrt{
									2
									\left[
										\gamma_k^2 L_k^2-\gamma_k\ell_k
									\right]_+
								}
							}
						\biggr\},
					\shortintertext{where}
						\ell_k
					=
						\tfrac{
							\innprod{\nabla f(x^k)-\nabla f(x^{k-1})}{x^k-x^{k-1}}
						}{
							\norm{x^k-x^{k-1}}_Q^2
						}
					\quad\text{and}\quad
						L_k
					=
						\tfrac{
							\norm{\nabla f(x^k)-\nabla f(x^{k-1})}_{Q^{-1}}
						}{
							\norm{x^k-x^{k-1}}_Q
						}.
					\end{gather*}
					This expression follows from the easily verifiable fact that \(\Lambda_{k,\delta}=\gamma_k^2L_k^2-2\gamma_k\ell_k+1\) (independently of \(\delta\)) in this case.
				\hfill\qedsymbol
				\end{remark}

	\section{Main inequalities}
		To ease the subsequent discussion, we introduce some convenient notational shorthands and remind some of those already encountered.
		Relative to the iterates generated by \eqref{eq:BPG}, for any \(k\in\N\) and \(x\in\dom\varphi\) we denote
		\begin{align}
		\label{eq:Pk}
			P_k(x)
		\coloneqq{} &
			\varphi(x^k)-\varphi(x)
		\shortintertext{and}
		\label{eq:Bk}
			B_{k+1}
		\coloneqq{} &
			\rho_{k+1}
			\innprod{x^{k+1}-x^k}{H_k(x^k)-H_k(x^{k-1})},
		\end{align}
		where \(\rho_{k+1}=\tfrac{\gamma_{k+1}}{\gamma_k}\) and
		\begin{equation}\label{eq:H:def}
			H_k\coloneqq\nabla\kernel-\gamma_k\nabla f
		\end{equation}
		is the ``forward'' operator.
		Due to convexity of \(g\), the BPG updates in \eqref{eq:BPG} are characterized by the subgradient inclusion
		\begin{subequations}
			\begin{align}
			\label{eq:subgrad}
				\tilde{\nabla}g(x^{k+1})
			\coloneqq{} &
				\tfrac{\nabla\kernel(x^k)-\nabla\kernel(x^{k+1})}{\gamma_{k+1}}
				-
				\nabla f(x^k)
			\in
				\partial g(x^{k+1}).
			\intertext{%
				Throughout, we use the notation \(\tilde{\nabla}g(x^{k+1})\) to indicate this particular element of the subgradient of \(g\) along the iterates.
				Similarly, we use
			}
			\nonumber
				\tilde{\nabla}\varphi(x^{k+1})
			\coloneqq{} &
				\tilde{\nabla} g(x^{k+1})
				+
				\nabla f(x^{k+1})
			\\
			\label{eq:subgradvarphi}
			={} &
				\tfrac{H_{k+1}(x^k)-H_{k+1}(x^{k+1})}{\gamma_{k+1}}
			\in
				\partial\varphi(x^{k+1}).
			\end{align}
		\end{subequations}
		In light of these, we adopt the notation
		\begin{gather*}
			\tD_g(w,x^k)
		\coloneqq
			g(w)-g(x^k)-\innprod{\tilde{\nabla}g(x^k)}{w-x^k},
		\shortintertext{and similarly}
			\tD_{\varphi}(w,x^k)
		\coloneqq
			{\varphi}(w)-{\varphi}(x^k)-\innprod{\tilde{\nabla}{\varphi}(x^k)}{w-x^k},
		\end{gather*}
		which are both positive quantities for any \(k\in\N\) and \(w\in\R^n\).

		The main identity in our study is an adaptation to the Bregman setting of the inequality in \cite[Lem. 2.2]{latafat2024adaptive}.
		The proof closely patterns the one in the reference, and is provided in the appendix for completeness.

		\begin{lemma}[main identity]\label{thm:eq}%
			Suppose that \cref{ass:basic} holds, and starting from \(x^0\in C\) consider Bregman proximal gradient iterations \eqref{eq:BPG} with stepsizes \(\gamma_k>0\).
			Then, for any \(x\in\dom\varphi\cap\dom\kernel\), \(\vartheta_{k+1}\geq0\), \(k\in\N\), it holds that
			\begin{align*}
			&
				\D_{\kernel}(x,x^{k+1})
				+
				\gamma_{k+1}(1+\vartheta_{k+1})P_k(x)
				+
				\D_{\kernel}(x^{k+1},x^k)
			\\
			={} &
				\D_{\kernel}(x,x^k)
				+
				\gamma_{k+1}\vartheta_{k+1} P_{k-1}(x)
				-
				\rho_{k+1}\vartheta_{k+1}(1-\gamma_k\ell_k)
				\DD_{\kernel}(x^k,x^{k-1})
				+
				B_{k+1}
			\\
			&
				-
				\gamma_{k+1}
				\Bigl\{
					\D_f(x,x^k)
					+
					\tD_g(x^{k+1},x^k)
					+
					\tD_g(x,x^{k+1})
					+
					\vartheta_{k+1}\tD_\varphi(x^{k-1},x^k)
				\Bigr\},
			\end{align*}
			where \(P_k(x)\), \(B_{k+1}\), and \(\ell_k\) are as in \eqref{eq:Pk}, \eqref{eq:Bk}, and \eqref{eq:lk}.
			In particular, denoting
			\[
				\widehat{\U}_k(x)
			\coloneqq
				\D_{\kernel}(x,x^k)
				+
				\gamma_k(1+\vartheta_k)P_{k-1}(x)
				+
				\D_{\kernel}(x^k,x^{k-1}),
			\]
			one has that
			\begin{align}
			\nonumber
				\widehat{\U}_{k+1}(x)
			\leq{} &
				\widehat{\U}_k(x)
				-
				\gamma_k(1+\vartheta_k-\rho_{k+1}\vartheta_{k+1})P_{k-1}(x)
				+
				B_{k+1}
			\\
			&
				-
				\left[
					\tfrac{\alpha_k}{1+\alpha_k}
					+
					\rho_{k+1}\vartheta_{k+1}(1-\gamma_k\ell_k)
				\right]
				\DD_{\kernel}(x^k,x^{k-1}).
			\label{eq:ineq0}
			\end{align}
		\end{lemma}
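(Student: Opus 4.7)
The plan is to first establish the equality by combining subgradient and convexity inequalities with the Bregman three-point identity (\cref{thm:3p}), and then to deduce the inequality \eqref{eq:ineq0} by discarding the nonnegative slacks and exploiting the symmetry identity $\tfrac{\ak}{1+\ak}\DD(\xk,\xk')=\D(\xk,\xk')$, which follows directly from \eqref{eq:ak}.

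Starting from the decomposition $P_k(x)=[f(\xk)-f(x)]+[g(\xk)-g(x)]$, convexity of $f$ at $\xk$ produces the slack $\D_f(x,\xk)$, while for $g$ the crucial move is to split $g(\xk)-g(x)=[g(\xk)-g(\xk*)]+[g(\xk*)-g(x)]$ and invoke the subgradient inclusion \eqref{eq:subgrad} at iteration $k$ (producing slack $\D_g(x,\xk*)$) as well as at iteration $k-1$ (producing slack $\D_g(\xk*,\xk)$). Collecting the inner products, two telescoping simplifications arise via \eqref{eq:subgrad}: $\nabla f(\xk)+\nabla*g(\xk*)$ reduces to $\tfrac{1}{\gamk*}(\nabla\kernel(\xk)-\nabla\kernel(\xk*))$, whereas $\nabla f(\xk)+\nabla*g(\xk)$ becomes $-\tfrac{1}{\gamk}(\Hk(\xk)-\Hk(\xk'))$. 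Applying \cref{thm:3p} to the first converts its contribution into $\D(x,\xk)-\D(x,\xk*)-\D(\xk*,\xk)$, while the second is recognised as $\Bk*/\gamk*$ after invoking the definition \eqref{eq:Bk} of $\Bk*$ and the relation $\rhok*=\gamk*/\gamk$. This already yields the identity for $\thetk*=0$.

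To incorporate the $\thetk*$-weighted shift, I would add $\gamk*\thetk*[P_k(x)-P_{k-1}(x)]=\gamk*\thetk*[\varphi(\xk)-\varphi(\xk')]$ to both sides; the subgradient inclusion \eqref{eq:subgradvarphi} at $\xk$ (built from the previous BPG step) rewrites $\varphi(\xk')-\varphi(\xk)=\D_\varphi(\xk',\xk)+\tfrac{1}{\gamk}\innprod{\Hk(\xk')-\Hk(\xk)}{\xk'-\xk}$, and the inner product simplifies to $(1-\gamk\lk)\DD(\xk,\xk')$ by the definitions \eqref{eq:Hk} and \eqref{eq:lk} of $\Hk$ and $\lk$, delivering the full equality.

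The inequality \eqref{eq:ineq0} then follows by expanding $\D(x,\xk)=\hUk(x)-\gamk(1+\thetk)P_{k-1}(x)-\D(\xk,\xk')$ from the definition of $\hUk$, discarding the four nonnegative Bregman slacks $\D_f(x,\xk)$, $\D_g(\xk*,\xk)$, $\D_g(x,\xk*)$ and $\thetk*\D_\varphi(\xk',\xk)$, and absorbing the residual $\D(\xk,\xk')$ into the coefficient of $\DD(\xk,\xk')$ via the symmetry identity above. The main obstacle lies in the careful tracking of the different subgradient representatives used for $g$ at $\xk$ (via the previous BPG step) and at $\xk*$ (via the current one), along with the orientation-sensitive applications of the three-point Bregman identity — a bookkeeping refinement needed beyond the Euclidean analysis of \cite{latafat2024adaptive}, where the symmetry of $\kernel=\j$ renders these ordering choices inconsequential.
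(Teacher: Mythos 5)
Your proposal is correct and follows essentially the same route as the paper's proof: the same three convexity slacks \(\D_f(x,\xk)\), \(\D_g(x,\xk*)\), \(\D_g(\xk*,\xk)\) built from the two subgradient representatives \eqref{eq:subgrad}, the three-point identity of \cref{thm:3p} applied to \(\innprod{\nabla\kernel(\xk)-\nabla\kernel(\xk*)}{x-\xk*}\), the recognition of \(\Bk*\), the \(\thetk*\)-weighted identity coming from \eqref{eq:subgradvarphi} and \eqref{eq:lk}, and finally the relation \(\D(\xk,\xk')=\tfrac{\ak}{1+\ak}\DD(\xk,\xk')\) to obtain \eqref{eq:ineq0}. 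The only difference is cosmetic bookkeeping: you decompose \(P_k(x)\) directly and add \(\gamk*\thetk*[P_k(x)-P_{k-1}(x)]\), whereas the paper sums the corresponding zero-valued identities \eqref{eq:main} and \eqref{eq:thetk}.
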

		\begin{proof}
			See \cref{proof:thm:eq}.
		\end{proof}

		The following two subsections will be devoted to developing analogues of Young's inequality allowing us to bound the inner product term \(B_{k+1}\) in terms of Bregman distances.
		These pinpoint the main departure from previous Euclidean analyses, in particular the need to introduce a new parameter \(\hat\rho_{k+1}\) that generates some unavoidable conservatism; see the discussion after \cref{thm:main}.

		\subsection{Young's inequality in the Bregman sense}
			Young's inequality is a very simple but powerful tool enabling to bound inner products in terms of sum of squares.
			Its derivation is elementary, all revolving around the fact that, for any \(u,v\in\R^n\) and \(\delta>0\),
			\[
				\innprod{u}{v}
			=
				\tfrac{1}{\delta}\innprod{u}{\delta v}
			=
				\tfrac{1}{2\delta}\norm{u}^2
				+
				\tfrac{\delta}{2}\norm{v}^2
				-
				\tfrac{1}{2\delta}\norm{u-\delta v}^2.
			\]
			Discarding the negative term leaves us with the familiar bound holding for any \(\delta>0\).
			The same arguments can be extended beyond quadratic norms to more general Bregman distances by means of the three-point identity of \cref{thm:3p}.

			\begin{lemma}[Young's inequality in the Bregman sense]\label{thm:BY}%
				Let \(\kernel:\R^n\to\Rinf\) be Legendre (not necessarily 1-coercive).
				Then, for any \(x\in\dom\kernel\), \(y\in\interior\dom\kernel\), \(v\in\R^n\), and \(\delta>0\) one has
				\begin{equation}\label{eq:BY}
					\innprod{x-y}{v}
				\leq
					\tfrac{1}{\delta}
					\D_{\kernel}(x,y)
					+
					\tfrac{1}{\delta}
					\D_{\kernel^*}\bigl(\nabla\kernel(y)+\delta v,\nabla\kernel(y)\bigr).
				\end{equation}
			\end{lemma}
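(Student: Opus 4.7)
The plan is to exploit the primal--dual symmetry of Bregman distances to reduce the inequality to a single application of the Fenchel--Young inequality, in the same spirit as the Euclidean derivation outlined right before the statement. Write $\eta \coloneqq \nabla\kernel(y)$ and $\xi \coloneqq \eta + \delta v$, so that $\delta v = \xi - \eta$ and the target becomes
\[
	\innprod{x-y}{\xi - \eta}
\leq
	\D(x,y) + \D*(\xi,\eta).
\]
Note that $\xi,\eta \in \R^n = \dom\kernel^*$ (full domain by 1-coercivity of $\kernel$, via \cref{fact:D}(iii)), and $\kernel^*$ is differentiable there, so $\D*(\xi,\eta)$ is well defined.

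The key observation is that $\eta = \nabla\kernel(y)$ means $y = \nabla\kernel^*(\eta)$ by \cref{fact:D}(iii), which turns the Fenchel--Young inequality at the pair $(y,\eta)$ into an equality: $\kernel(y)+\kernel^*(\eta) = \innprod{y}{\eta}$. Substituting this into the definition $\D*(\xi,\eta)=\kernel^*(\xi)-\kernel^*(\eta)-\innprod{y}{\xi-\eta}$ yields the handy identity
\[
	\kernel^*(\xi) = \D*(\xi,\eta) + \innprod{y}{\xi} - \kernel(y).
\]

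Next, the (generally strict) Fenchel--Young inequality $\innprod{x}{\xi}\leq\kernel(x)+\kernel^*(\xi)$ combined with the identity above gives
\[
	\innprod{x-y}{\xi}
\leq
	\kernel(x) - \kernel(y) + \D*(\xi,\eta).
\]
On the other hand, the very definition of the Bregman distance reads
\[
	\innprod{x-y}{\eta} = \kernel(x) - \kernel(y) - \D(x,y).
\]
Subtracting these two relations, the terms $\kernel(x)-\kernel(y)$ cancel and one obtains $\innprod{x-y}{\xi-\eta}\leq\D(x,y)+\D*(\xi,\eta)$; dividing by $\delta$ finishes the proof.

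No step looks particularly delicate: once the Fenchel--Young equality at $(y,\eta)$ is in place, everything else is bookkeeping with the definitions of $\D$ and $\D*$. The only mild subtlety is handling the case $x\in\dom\kernel\setminus\interior\dom\kernel$, where $\nabla\kernel(x)$ need not exist; this is precisely why the argument is set up on the dual side (through $\kernel^*$), invoking Fenchel--Young as an \emph{inequality} at $x$ rather than as an equality. 1-coercivity of $\kernel$ ensures $\kernel^*$ is finite and differentiable everywhere, which is all that is needed.
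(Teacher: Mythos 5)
Your proof is correct, and it takes a genuinely different route from the paper's. The paper rewrites \(\delta v=\nabla\kernel(z)-\nabla\kernel(y)\) with \(z\coloneqq\nabla\kernel^*\bigl(\nabla\kernel(y)+\delta v\bigr)\), applies the three-point identity of \cref{thm:3p} to split \(\innprod{x-y}{\nabla\kernel(z)-\nabla\kernel(y)}\) into \(\D(x,y)+\D(y,z)-\D(x,z)\), discards \(\D(x,z)\geq0\), and converts \(\D(y,z)\) to \(\D^*\bigl(\nabla\kernel(y)+\delta v,\nabla\kernel(y)\bigr)\) via \cref{fact:D}(iv). You instead work entirely from the Fenchel--Young (in)equality and the raw definitions of \(\D\) and \(\D^*\): the equality case at \((y,\nabla\kernel(y))\) rewrites \(\kernel^*(\xi)\) in terms of \(\D^*(\xi,\eta)\), the inequality case at \((x,\xi)\) is the single place slack is introduced, and subtracting the definition of \(\D(x,y)\) closes the argument. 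Algebraically the two proofs discard the same nonnegative quantity --- one can check that your Fenchel--Young gap \(\kernel(x)+\kernel^*(\xi)-\innprod{x}{\xi}\) equals the paper's discarded term \(\D(x,z)\) --- so the bounds are identical. What your version buys is self-containment: it avoids invoking the three-point identity and the dual-symmetry formula for Bregman distances as black boxes, and it makes transparent exactly where and why the inequality is tight (precisely when \(\xi\in\partial\kernel(x)\)). The paper's version is shorter given the lemmas it already has on hand. Your remark about handling \(x\in\dom\kernel\setminus\interior\dom\kernel\) is well taken, though the paper's proof also covers this case since \cref{thm:3p} only requires \(x\in\dom\kernel\).
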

			\begin{proof}
				Without loss of generality we may assume that \(\nabla\kernel(y)+\delta v\in\dom\kernel^*\), for otherwise the right-hand side of \eqref{eq:BY} is infinite and the inequality trivializes.
				Since \(\nabla\kernel(y)\in\interior\dom\kernel^*\) by \cref{fact:D:Dphi*}, for any \(t\in(0,1)\) one has that
				\(
					\nabla\kernel(y)+t\delta v\in\interior\dom\kernel^*
				\)
				by \cite[Prop. 3.44]{bauschke2017convex}.
				We have
				\begin{align*}
					\innprod{x-y}{v}
				={} &
					\tfrac{1}{t\delta}
					\innprod{x-y}{t\delta v}
				\\
				={} &
					\tfrac{1}{t\delta}
					\innprod{x-y}{\nabla\kernel(\nabla\kernel^*\bigl(\nabla\kernel(y)+t\delta v)\bigr)-\nabla\kernel(y)}
				\\
				={} &
					\tfrac{1}{t\delta}
					\D_{\kernel}(x,y)
					+
					\tfrac{1}{t\delta}
					\D_{\kernel}\bigl(y,\nabla\kernel^*\bigl(\nabla\kernel(y)+t\delta v\bigr)\bigr)
				\\
				&
					-
					\tfrac{1}{t\delta}
					\D_{\kernel}\bigl(x,\nabla\kernel^*\bigl(\nabla\kernel(y)+t\delta v\bigr)\bigr)
				\\
				\leq{} &
					\tfrac{1}{t\delta}
					\D_{\kernel}(x,y)
					+
					\tfrac{1}{t\delta}
					\D_{\kernel^*}\bigl(\nabla\kernel(y)+t\delta v,\nabla\kernel(y)\bigr).
			\end{align*}
				The second equality follows from Legendreness of \(\kernel\) via \cref{fact:D:Dphi*}, and the third one is the three point identity of \cref{thm:3p}.
				The inequality follows from \cref{fact:D:DtoD*} and the fact that \(\D_\kernel\geq0\).
				By taking the limit as \(t\to1^-\) and appealing to \cite[Thm. 7.5]{rockafellar1970convex}, the claimed inequality \eqref{eq:BY} is obtained.
			\end{proof}

			When \(\kernel=\j\), one recovers the usual Young's inequality
			\[
				\innprod{x-y}{v}
			\leq
				\tfrac{1}{2\delta}\norm{x-y}^2
				+
				\tfrac{\delta}{2}\norm{v}^2
			\eqqcolon
				\psi_{\j}(\delta).
			\]
			Note that the right-hand side \(\psi_{\j}(\delta)\) diverges as \(\delta\to0^+\) and \(\delta\to\infty\), and attains a unique minimizer at \(\delta=\frac{\norm{x-y}}{\norm{v}}\).
			This specific choice of \(\delta\) leads to the Cauchy-Schwarz inequality
			\[
				\innprod{x-y}{v}
			\leq
				\inf_{\delta>0}\psi_{\j}(\delta)
			=
				\norm{x-y}\norm{v}.
			\]
			A similar pattern occurs for the right-hand side in \eqref{eq:BY}, although with some complications arising because of the dependency on \(\delta\) for the second Bregman distance.

			\begin{lemma}[A Cauchy--Schwarz inequality in the Bregman sense]\label{thm:BCS}%
				In the setting of \cref{thm:BY}, suppose that \(\kernel\) is 1-coercive. Then, either the upper bound in \eqref{eq:BY} is always decreasing in \(\delta\), or it is minimized at a unique \(\delta^\star>0\) which is characterized by the identity
				\begin{equation}\label{eq:delta*}
					\D_{\kernel^*}\bigl(\nabla\kernel(y),\nabla\kernel(y)+\delta^\star v\bigr)
				=
					\D_{\kernel}(x,y).
				\end{equation}
				In this latter case, which is necessarily true when \(\dom\kernel\) is open, one has that
				\begin{equation}\label{eq:BCS}
					\innprod{x-y}{v}
				\leq
					\tfrac{1}{\delta^\star}
					\DD_{\kernel^*}\bigl(\nabla\kernel(y)+\delta^\star v,\nabla\kernel(y)\bigr).
				\end{equation}
			\end{lemma}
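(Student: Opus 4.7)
The plan is to treat the right-hand side of \eqref{eq:BY} as a single-variable function \(\psi(\delta)>0\) of \(\delta>0\) and locate its critical points. Setting \(B(\delta)\coloneqq\D*(\nabla\kernel(y)+\delta v,\nabla\kernel(y))\) and using \(\nabla\kernel*(\nabla\kernel(y))=y\) from \cref{fact:D}, a direct computation gives \(B'(\delta)=\innprod{y_\delta-y}{v}\), where \(y_\delta\coloneqq\nabla\kernel*(\nabla\kernel(y)+\delta v)\). First I would establish the clean algebraic identity
\[
	\delta B'(\delta)-B(\delta)
=
	\D*\bigl(\nabla\kernel(y),\nabla\kernel(y)+\delta v\bigr),
\]
by expanding both Bregman distances and cancelling via the Fenchel--Young equality \(\kernel*(q)+\kernel(\nabla\kernel*(q))=\innprod{q}{\nabla\kernel*(q)}\). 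Writing \(\psi(\delta)=(\D(x,y)+B(\delta))/\delta\), this reduces the derivative to
\[
	\psi'(\delta)
=
	\tfrac{1}{\delta^2}
	\bigl[\D*\bigl(\nabla\kernel(y),\nabla\kernel(y)+\delta v\bigr)-\D(x,y)\bigr].
\]

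Let \(g(\delta)\coloneqq\D*(\nabla\kernel(y),\nabla\kernel(y)+\delta v)\), so \(g(0)=0\). In the trivial case \(v=0\) the bound \eqref{eq:BY} becomes \(0\leq\D(x,y)/\delta\), which is strictly decreasing; otherwise, strict convexity of \(\kernel*\) (inherited from essential smoothness of \(\kernel\) by Legendre duality) makes \(B\) strictly convex in \(\delta\), and a short convexity argument applied to \(g=\delta B'-B\) shows that \(g\) is strictly increasing on \((0,\infty)\). Consequently, either \(g(\delta)<\D(x,y)\) for all \(\delta>0\), whence \(\psi'<0\) throughout and \(\psi\) is strictly decreasing, or \(g\) attains \(\D(x,y)\) at a unique \(\delta^\star>0\) — which is precisely \eqref{eq:delta*} — and the sign of \(\psi'\) flips from negative to positive across \(\delta^\star\), making it the unique global minimizer.

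To deduce \eqref{eq:BCS}, substitute \(\delta=\delta^\star\) into \eqref{eq:BY} and use \eqref{eq:delta*} to rewrite \(\tfrac{1}{\delta^\star}\D(x,y)\) as \(\tfrac{1}{\delta^\star}\D*(\nabla\kernel(y),\nabla\kernel(y)+\delta^\star v)\); the sum of the two Bregman terms is then exactly \(\tfrac{1}{\delta^\star}\DD*(\nabla\kernel(y)+\delta^\star v,\nabla\kernel(y))\) by definition \eqref{eq:DD}.

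The main obstacle is showing that \eqref{eq:delta*} is solvable whenever \(\dom\kernel\) is open, i.e.\ that \(g(\delta)\to\infty\) as \(\delta\to\infty\). Here I would exploit the dual reformulation \(g(\delta)=\D(y_\delta,y)\), obtained by substituting \(\kernel*(\nabla\kernel(y)+\delta v)=\innprod{\nabla\kernel(y)+\delta v}{y_\delta}-\kernel(y_\delta)\) into \(g\) and simplifying. Since \(\nabla\kernel(y)+\delta v\to\infty\) while \(\nabla\kernel\) is continuous on \(\interior\dom\kernel\), the point \(y_\delta\) cannot remain in any compact subset of \(\interior\dom\kernel\): either \(\norm{y_\delta}\to\infty\), in which case 1-coercivity of \(\kernel\) yields \(\kernel(y_\delta)\to\infty\), or \(y_\delta\) approaches a boundary point of \(\dom\kernel\) which — precisely by \(\dom\kernel\) being open — lies outside \(\dom\kernel\), so that lower semicontinuity of \(\kernel\) forces \(\kernel(y_\delta)\to\infty\). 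Either way \(\D(y_\delta,y)\to\infty\), and \eqref{eq:delta*} admits a solution.
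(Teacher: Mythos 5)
Your proof is correct and follows the paper's strategy for the core dichotomy: you compute the same derivative
\[
	\psi'(\delta) = \tfrac{1}{\delta^2}\bigl[\D*\bigl(\nabla\kernel(y),\nabla\kernel(y)+\delta v\bigr) - \D(x,y)\bigr],
\]
argue that its numerator is a strictly increasing function of \(\delta\) vanishing at \(\delta=0\) (hence starting below \(\D(x,y)>0\)), and conclude that \(\psi\) is either everywhere decreasing or minimized at the unique root \(\delta^\star\) of \eqref{eq:delta*}; substituting \(\delta=\delta^\star\) into \eqref{eq:BY} and invoking \eqref{eq:delta*} and \eqref{eq:DD} then gives \eqref{eq:BCS}. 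Your intermediate identity \(\delta B'(\delta)-B(\delta)=\D*(\nabla\kernel(y),\nabla\kernel(y)+\delta v)\) is just the paper's computation made more explicit, and your ``short convexity argument'' for monotonicity of \(g=\delta B'-B\) is valid without second derivatives, consistent with the lemma's hypotheses.

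The genuine departure is the final sub-claim that openness of \(\dom\kernel\) guarantees existence of \(\delta^\star\). The paper simply cites \cite[Cor.\ 3.11]{bauschke1997legendre} for coercivity of \(\D*(\nabla\kernel(y),{}\cdot{})\). You instead give a self-contained argument: via \cref{fact:D}(iv) you rewrite the numerator as \(g(\delta)=\D(y_\delta,y)\) with \(y_\delta=\nabla\kernel*(\nabla\kernel(y)+\delta v)\), and then show \(g(\delta)\to\infty\) by combining 1-coercivity of \(\kernel\), essential smoothness (which forbids \(y_\delta\) from clustering inside \(C\) while \(\nabla\kernel(y_\delta)\to\infty\)), and lower semicontinuity together with openness of \(\dom\kernel\) (which makes any boundary cluster point carry value \(+\infty\)). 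This alternative is correct and has the pedagogical merit of exposing exactly where openness enters; it costs a slightly longer case analysis. One cosmetic tightening: rather than the dichotomy ``\(\norm{y_\delta}\to\infty\) or \(y_\delta\) approaches a boundary point,'' it is cleaner to argue by contradiction along subsequences — if \(g(\delta_k)\) stayed bounded along some \(\delta_k\to\infty\), then coercivity of \(\D({}\cdot{},y)\) (itself a consequence of 1-coercivity of \(\kernel\)) would force \(\seq{y_{\delta_k}}\) to have a bounded subsequence, whose cluster point must lie on the boundary because \(\nabla\kernel(y_{\delta_k})\to\infty\); the lsc contradiction then closes the argument.
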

			\begin{proof}
				Let
				\[
					\psi_{\kernel}(\delta)
				\coloneqq
					\frac{
						\D_{\kernel}(x,y)
						+
						\D_{\kernel^*}\bigl(\nabla\kernel(y)+\delta v,\nabla\kernel(y)\bigr)
					}{\delta}
				\]
				denote the right-hand side of the Bregman--Young inequality \eqref{eq:BY}.
				A simple computation reveals that its derivative is
				\begin{align*}
					\psi_{\kernel}'(\delta)
				={} &
					\frac{
						\innprod{\nabla\kernel^*\bigl(\nabla\kernel(y)+\delta v\bigr)-y}{\delta v}
						-
						\D_{\kernel}(x,y)
						-
						\D_{\kernel^*}\bigl(\nabla\kernel(y)+\delta v,\nabla\kernel(y)\bigr)
					}{\delta^2}
				\\
				={} &
					\frac{
						\D_{\kernel^*}\bigl(\nabla\kernel(y),\nabla\kernel(y)+\delta v\bigr)
						-
						\D_{\kernel}(x,y)
					}{\delta^2}.
				\end{align*}
				Since \(\kernel^*\) is strictly convex, the numerator is strictly increasing. Moreover, as \(\D_{\kernel}(x,y)>0\), it is strictly negative at \(\delta=0\).
				Therefore, it is either negative for all \(\delta>0\) or it vanishes at a unique \(\delta^\star\) as in the statement, which must be the global minimum of \(\psi_{\kernel}\).

				Thus, in this latter case,
				\begin{equation}\label{eq:infpsi}
					\innprod{x-y}{v}
				\leq
					\inf_{\delta>0}\psi_{\kernel}(\delta)
				=
					\frac{
						\D_{\kernel}(x,y)
						+
						\D_{\kernel^*}\bigl(\nabla\kernel(y)+\delta^\star v,\nabla\kernel(y)\bigr)
					}{\delta^\star},
				\end{equation}
				which by \eqref{eq:delta*} expands to the right-hand side of \eqref{eq:BCS}.

				Finally, if \(\dom\kernel\) is open it follows from \cite[Cor. 3.11]{bauschke1997legendre} that \(\D_{\kernel^*}\bigl(\nabla\kernel(y),\cdot\bigr)\) is coercive, and thus the numerator in the expression of \(\psi_{\kernel}'(\delta)\) cannot be negative for all \(\delta\).
			\end{proof}

			Note that the right-hand side of \eqref{eq:BCS} does depend on \(x\) via the parameter \(\delta^\star\), as evident by its definition \eqref{eq:delta*}.
			This upper bound can be relaxed into a simplified form whenever \(\kernel\) has a (strictly positive) \emph{symmetry coefficient} \(\alpha=\alpha(\kernel)>0\) as in \eqref{eq:alpha}.
			In this case, \(\dom\kernel\) must be open \cite[Prop. 2]{bauschke2017descent} and, since \(\alpha(\kernel)=\alpha(\kernel^*)\) \cite[Rem. 2(b)]{bauschke2017descent}, \eqref{eq:infpsi} can be further expanded into
			\[
				\innprod{x-y}{v}
			\leq
				\frac{
					\D_{\kernel}(x,y)
					+
					\frac{1}{\alpha}\D_{\kernel^*}\bigl(\nabla\kernel(y),\nabla\kernel(y)+\delta^\star v\bigr)
				}{\delta^\star},
			\]
			which combined with \eqref{eq:delta*} results in the following simplified version.

			\begin{corollary}
				Consider the setting of \cref{thm:BCS} with \(\innprod{x-y}{v}>0\), and suppose that \(\kernel\) has symmetry coefficient \(\alpha>0\).
				Then, \(\delta^\star\) as in \eqref{eq:delta*} exists, and one has
				\begin{equation}\label{eq:BCS'}
					\innprod{x-y}{v}
				\leq
					\tfrac{1+\alpha}{\alpha}
					\tfrac{1}{\delta^\star}
					\D_{\kernel}(x,y).
				\end{equation}
			\end{corollary}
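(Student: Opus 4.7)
The plan is to combine the intermediate bound \eqref{eq:infpsi} already obtained in the proof of \cref{thm:BCS} with the fact that positivity of the symmetry coefficient is preserved under Legendre conjugation, then close the argument using the characterizing identity \eqref{eq:delta*}.

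First, I would observe that the hypothesis \(\alpha=\alpha(\kernel)>0\) forces \(\dom\kernel\) to be open by \cite[Prop.~2]{bauschke2017descent}, which is precisely the condition in \cref{thm:BCS} guaranteeing existence (and uniqueness) of the minimizer \(\delta^\star>0\) of \(\psi_\kernel\) characterized by \eqref{eq:delta*}. Consequently the bound
\[
	\innprod{x-y}{v}
	\leq
	\frac{\D(x,y)+\D*\bigl(\nabla\kernel(y)+\delta^\star v,\nabla\kernel(y)\bigr)}{\delta^\star}
\]
from \eqref{eq:infpsi} is at our disposal.

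Second, I would invoke \cite[Rem.~2(b)]{bauschke2017descent}, which states \(\alpha(\kernel*)=\alpha(\kernel)=\alpha\), and apply the definition \eqref{eq:alpha} of the symmetry coefficient to \(\kernel*\) at the pair of points \(\nabla\kernel(y)\) and \(\nabla\kernel(y)+\delta^\star v\) (these are distinct because \(\innprod{x-y}{v}>0\) in particular forces \(v\neq0\)). This yields
\[
	\D*\bigl(\nabla\kernel(y)+\delta^\star v,\nabla\kernel(y)\bigr)
	\leq
	\tfrac{1}{\alpha}\,
	\D*\bigl(\nabla\kernel(y),\nabla\kernel(y)+\delta^\star v\bigr),
\]
and then the defining identity \eqref{eq:delta*} collapses the right-hand side to \(\tfrac{1}{\alpha}\D(x,y)\). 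Substituting into the previous display and factoring gives \(\innprod{x-y}{v}\leq\tfrac{1+\alpha}{\alpha}\tfrac{1}{\delta^\star}\D(x,y)\), which is \eqref{eq:BCS'}.

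There is essentially no obstacle here: the two lemmas preceding the corollary already carry all the work. The only mildly delicate point is bookkeeping the order of the arguments in the two dual Bregman terms, so that the one bounded from above via the symmetry coefficient is exactly the one tied to \(\D(x,y)\) through \eqref{eq:delta*}; the assumption \(\innprod{x-y}{v}>0\) is used only to rule out the degenerate case \(v=0\), in which the claim is trivial but the characterization \eqref{eq:delta*} need not define a positive \(\delta^\star\).
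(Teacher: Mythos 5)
Your proposal is correct and follows the paper's own route essentially verbatim: openness of \(\dom\kernel\) from \(\alpha(\kernel)>0\) gives existence of \(\delta^\star\) via \cref{thm:BCS}, then \(\alpha(\kernel*)=\alpha(\kernel)\) flips the arguments of the dual Bregman term in \eqref{eq:infpsi}, and \eqref{eq:delta*} converts it to \(\D(x,y)\), yielding \eqref{eq:BCS'}. Your remark that \(\innprod{x-y}{v}>0\) rules out the degenerate case \(v=0\) (where the coercivity argument behind existence of \(\delta^\star\) would fail) is a sensible added precision, fully consistent with the paper.
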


		\subsection[Bounding the inner product]{Bounding the inner product \(B_{k+1}\)}\label{sec:Bk}%
			Patterning previous analyses of adaptive stepsizes, our goal is to turn the identity of \cref{thm:eq} into a descent-type inequality on some merit function.
			The bottleneck lies in the inner product term \(B_{k+1}\) as in \eqref{eq:Bk}, which in previous analyses restricted to the Euclidean case was handled via standard Young's or Cauchy--Schwarz bounds.

			The Bregman version of Young's inequality given in \cref{thm:BY} allows us to replicate these ideas, but with an important caveat.
			Indeed, the presence of \(\delta\) within the argument of the Bregman distance \(\D_{\kernel^*}\) constrains our choice on the parameter, causing a slight departure from the easier Euclidean case in which such complication does not arise.
			In this subsection, we identify two possible options, each leading to one of the two algorithmic variants \refBY{} and \refFNEa{}.

			\subsubsection{\refBY{} bound}\label{sec:Bk*:BY}%
				A direct application of Young's inequality of \cref{thm:BY} allows us to bound the inner product \(B_{k+1}\) as
				\begin{align}
				\nonumber
					B_{k+1}
				\leq{} &
					\tfrac{\rho_{k+1}}{\delta_{k+1}}\D_{\kernel}(x^{k+1},x^k)
				\\
				\nonumber
				&
					+
					\tfrac{\rho_{k+1}}{\delta_{k+1}}\D_{\kernel^*}\bigl(\nabla\kernel(x^k)+\delta_{k+1}\bigl[H_k(x^k)-H_k(x^{k-1})\bigr],\nabla\kernel(x^k)\bigl)
				\shortintertext{%
					for any \(\delta_{k+1}>0\), which in terms of the Lipschitz-like estimate \(\Lambda_{k,\delta}\) as in \eqref{eq:Lamk} reads
				}
				={} &
					\tfrac{\rho_{k+1}}{\delta_{k+1}}\D_{\kernel}(x^{k+1},x^k)
					+
					\tfrac{\delta_{k+1}\rho_{k+1}}{2}
					\Lambda_{k,\delta_{k+1}}
					\DD_{\kernel}(x^k,x^{k-1}).
				\label{eq:Bk:BY}
				\end{align}
				As we will see, the employment of this inequality combined with a specific choice of \(\delta_{k+1}\) will lead to the stepsize update in the \refBY{} variant.

			\subsubsection{\refFNEa{} bound}\label{sec:Bk*:FNEa}%
				The bound leading to the \refFNEa{} variant follows from the combination of \eqref{eq:Bk:BY} and the following lemma, which furnishes a Bregman generalization of firm nonexpansiveness in the adaptive setting.

				\begin{lemma}\label{thm:FNE}
					Let \(\kernel:\R^n\to\Rinf\) be Legendre, and \(f,g:\R^n\to\Rinf\) be proper, lsc, and convex, with \(f\) differentiable on \(C\coloneqq\interior\dom\kernel\) and \(\dom g\cap C\neq\emptyset\).
					Then, denoting \(H_k\coloneqq\nabla\kernel-\gamma_k\nabla f\), Bregman proximal gradient iterations \eqref{eq:BPG} with stepsizes \(\gamma_k>0\) and starting at some \(x^0\in C\) satisfy
					\[
						B_{k+1}
					\coloneqq
						\tfrac{\gamma_{k+1}}{\gamma_k}
						\innprod{x^{k+1}-x^k}{H_k(x^k)-H_k(x^{k-1})}
					\geq
						\DD_{\kernel}(x^{k+1},x^k)
					\quad
						\forall k\geq 1.
					\]
				\end{lemma}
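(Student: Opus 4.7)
The plan is to exploit monotonicity of \(\partial g\) applied to the two specific subgradients arising from consecutive BPG updates. Concretely, iterating \eqref{eq:subgrad} at step \(k\) and step \(k+1\) yields
\[
    \nabla*g(\xk)=\tfrac{\nabla\kernel(\xk')-\nabla\kernel(\xk)}{\gamk}-\nabla f(\xk')\in\partial g(\xk),
\qquad
    \nabla*g(\xk*)=\tfrac{\nabla\kernel(\xk)-\nabla\kernel(\xk*)}{\gamk*}-\nabla f(\xk)\in\partial g(\xk*).
\]
Since \(\partial g\) is monotone, the inner product of their difference with \(\xk*-\xk\) is nonnegative.

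Next, I would group the terms in \(\nabla*g(\xk*)-\nabla*g(\xk)\) so that the forward operator \(\Hk=\nabla\kernel-\gamk\nabla f\) naturally appears. After multiplying the monotonicity inequality by \(\gamk*>0\), the \(f\)-contribution and the \(\nabla\kernel(\xk)-\nabla\kernel(\xk')\) contribution combine into
\[
    \tfrac{\gamk*}{\gamk}\bigl[\nabla\kernel(\xk)-\nabla\kernel(\xk')-\gamk(\nabla f(\xk)-\nabla f(\xk'))\bigr]
=
    \tfrac{\gamk*}{\gamk}\bigl[\Hk(\xk)-\Hk(\xk')\bigr],
\]
while the remaining piece is simply \(\nabla\kernel(\xk)-\nabla\kernel(\xk*)\). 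Rearranging moves \(\innprod{\nabla\kernel(\xk*)-\nabla\kernel(\xk)}{\xk*-\xk}\) to the right-hand side, and by definition \eqref{eq:DD} this quantity is exactly \(\DD(\xk*,\xk)\).

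There is no real obstacle here: the only subtlety is making sure the definition of \(\Hk\) (which uses the stepsize \(\gamk\) of the earlier update) lines up with the subgradient expression, which is why the prefactor \(\gamk*/\gamk=\rhok*\) appears in front of \(\Hk(\xk)-\Hk(\xk')\). All the points \(\xk',\xk,\xk*\) lie in \(C=\interior\dom\kernel\) by Legendreness of \(\kernel\) (cf.\ \cref{fact:D}), so \(\nabla\kernel\) is well-defined at each of them and \(\DD(\xk*,\xk)\) is finite; convexity of \(g\) suffices for the monotonicity step, with no need to invoke anything about \(f\) beyond differentiability on \(C\).
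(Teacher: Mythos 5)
Your proof is correct and follows essentially the same route as the paper: both decompose \(\rhok*\bigl[\Hk(\xk)-\Hk(\xk')\bigr]\) as \(\nabla\kernel(\xk*)-\nabla\kernel(\xk)\) plus \(\gamk*\bigl[\nabla*g(\xk*)-\nabla*g(\xk)\bigr]\), identify the first contribution with \(\DD(\xk*,\xk)\), and discard the second as nonnegative by monotonicity of \(\partial g\). The only cosmetic difference is order of operations (you invoke monotonicity up front and then regroup, while the paper first writes the algebraic identity and then observes the residual is \(\gamk*\D_g\geq0\)); the substance is identical.
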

				\begin{proof}
					Follows by observing that
					\begin{align*}
					&
						\tfrac{\gamma_{k+1}}{\gamma_k}
						\bigl(H_k(x^k)-H_k(x^{k-1})\bigr)
					\\
					={} &
						\nabla\kernel(x^{k+1})-\nabla\kernel(x^k)
						+
						\gamma_{k+1}
						\Bigl[
							\smash{
								\overbrace{
									\tfrac{H_{k+1}(x^k)-\nabla\kernel(x^{k+1})}{\gamma_{k+1}}
								}^{\tilde{\nabla}g(x^{k+1})}
								-
								\overbrace{
									\tfrac{H_k(x^{k-1})-\nabla\kernel(x^k)}{\gamma_k}
								}^{\tilde{\nabla}g(x^k)}
							}
						\Bigr],
					\end{align*}
					hence that
					\(
						B_{k+1}
					=
						\DD_{\kernel}(x^{k+1},x^k)
						+
						\gamma_{k+1}\tD_g(x^{k+1},x^k)
					\geq
						\DD_{\kernel}(x^{k+1},x^k)
					\).
				\end{proof}

				We may thus complement \eqref{eq:Bk:BY} with a lower bound as
				\begin{align*}
					\DD_{\kernel}(x^{k+1},x^k)
				\leq{} &
					B_{k+1}
				\\
				\leq{} &
					\tfrac{\rho_{k+1}}{\delta_{k+1}}\D_{\kernel}(x^{k+1},x^k)
					+
					\tfrac{\delta_{k+1}\rho_{k+1}}{2}
					\Lambda_{k,\delta_{k+1}}
					\DD_{\kernel}(x^k,x^{k-1}).
				\end{align*}
				If \(\kernel\) has symmetry coefficient \(\alpha>0\) as in \eqref{eq:alpha}, then note that
				\[
					\DD_{\kernel}(x^{k+1},x^k)
				\geq
					(1+\alpha)\D_{\kernel}(x^{k+1},x^k).
				\]
				Combined with the previous inequality, we obtain
				\[
					\left(
						1+\alpha-\tfrac{\rho_{k+1}}{\delta_{k+1}}
					\right)
					\D_{\kernel}(x^{k+1},x^k)
				\leq
					\tfrac{\delta_{k+1}\rho_{k+1}}{2}
					\Lambda_{k,\delta_{k+1}}
					\DD_{\kernel}(x^k,x^{k-1}),
				\]
				which plugged into \eqref{eq:Bk:BY} leads to
				\begin{equation}
				\label{eq:Bk:FNEa}
					B_{k+1}
				\leq
					\tfrac{
						(1+\alpha)\delta_{k+1}^2
					}{
						(1+\alpha)\delta_{k+1}-\rho_{k+1}
					}
					\tfrac{\rho_{k+1}}{2}
					\Lambda_{k,\delta_{k+1}}
					\DD_{\kernel}(x^k,x^{k-1}),
				\end{equation}
				holding for any \(\delta_{k+1}\) such that \((1+\alpha)\delta_{k+1}-\rho_{k+1}>0\).

		\subsection{A merit function for \refBY}\label{sec:BY:merit}%
			By bounding the inner product term \(B_{k+1}\) with \eqref{eq:Bk:BY}, the inequality \eqref{eq:ineq0} in \cref{thm:eq} reveals that Bregman proximal gradient iterations \eqref{eq:BPG} with arbitrary stepsizes \(\gamma_k>0\) satisfy
			\begin{align}
			\nonumber
			&
				\widehat{\U}_{k+1}(x)
				-
				\tfrac{\rho_{k+1}}{\delta_{k+1}}
				\D_{\kernel}(x^{k+1},x^k)
			\\
			\nonumber
			\leq{} &
				\widehat{\U}_k(x)
				-
				\tfrac{\rho_k}{\delta_k}
				\D_{\kernel}(x^k,x^{k-1})
				-
				\gamma_k(1+\vartheta_k-\rho_{k+1}\vartheta_{k+1})P_{k-1}(x)
			\\
			&
				-
				\left[
					\bigl(1-\tfrac{\rho_k}{\delta_k}\bigr)
					-
					\rho_{k+1}(1+\alpha_k)
					\tfrac{
						\frac{\delta_{k+1}}{2}
						\Lambda_{k,\delta_{k+1}}
						-
						\vartheta_{k+1}
						(1-\gamma_k\ell_k)
					}{\alpha_k}
				\right]
				\D_{\kernel}(x^k,x^{k-1})
			\label{eq:UkBYdt:leq}
			\end{align}
			for any \(x\in\dom\varphi\cap\dom\kernel\), \(\vartheta_k\geq0\), and \(\delta_k>0\), \(k\in\N\).
			Imposing that the terms in brackets multiplying \(P_{k-1}(x)\) and \(\D_{\kernel}(x^k,x^{k-1})\) in the right-hand side of \eqref{eq:UkBYdt:leq} are negative amounts to the following two conditions:
			\begin{subequations}\label{eq:gamk:ineq12}
				\begin{gather}
				\label{eq:gamk:ineq1}
					\vartheta_{k+1}\rho_{k+1}\leq 1+\vartheta_k
				\shortintertext{and}
				\label{eq:gamk:ineq2}
					1-\tfrac{\rho_k}{\delta_k}
				\geq
					\rho_{k+1}
					\tfrac{1+\alpha_k}{\alpha_k}
					\bigl[
						\tfrac{\delta_{k+1}}{2}
						\Lambda_{k,\delta_{k+1}}
						-
						\vartheta_{k+1}(1-\gamma_k\ell_k)
					\bigr].
				\end{gather}
			\end{subequations}
			In line with the analyses of \cite{malitsky2020adaptive,latafat2024adaptive}, a convenient choice for the parameter \(\delta_{k+1}\) is \(\delta_{k+1} = 2\rho_{k+1}\).
			However, this choice is not feasible in our more general setting.
			Specifically, selecting \(\rho_{k+1}\) to satisfy \eqref{eq:gamk:ineq2} requires knowledge of the quantity \(\Lambda_{k,\delta_{k+1}}\), which generally depends on \(\delta_{k+1}\) itself (except in the special case where \(\kernel\) is quadratic).
			As a result, setting \(\delta_{k+1}=2\rho_{k+1}\) would create a circular dependency between the two parameters.
			To complicate things further, note that the left-hand side of \eqref{eq:gamk:ineq2} indicates that a constraint \(\delta_k>\rho_k\) must be in place in order to ensure the existence of a \(\rho_{k+1}>0\) satisfying the inequality.

			In order to resolve this circular dependence, we introduce a parameter \(\hat\rho_{k+1}\) that shall provide an overestimate
			\begin{equation}\label{eq:rhokhatgeq}
				\hat\rho_{k+1}\geq\rho_{k+1}
			\end{equation}
			while only being based on information available at iteration \(k\).
			Its explicit value will be revealed shortly after.
			Then, we may conveniently select \(\delta_k=2\hat\rho_k\) and \(\vartheta_k=\hat\rho_k\) so that \eqref{eq:gamk:ineq12} simplifies as
			\[
				\rho_{k+1}\hat\rho_{k+1}
			\leq
				1+\hat\rho_k
			\quad\text{and}\quad
				1-\tfrac{\rho_k}{2\hat\rho_k}
			\geq
				\rho_{k+1}
				\hat\rho_{k+1}
				\tfrac{1+\alpha_k}{\alpha_k}
				\bigl[
					\Lambda_{k,\delta_{k+1}}
					-
					(1-\gamma_k\ell_k)
				\bigr],
			\]
			that is,
			\begin{equation}\label{eq:BY:rhok*+}
				\rho_{k+1}
			\leq
				\min\left\{
					\frac{1+\hat\rho_k}{\hat\rho_{k+1}}
					,\,
					\frac{\alpha_k}{1+\alpha_k}
					\frac{
						1+ \frac{\hat\rho_k - \rho_k}{\hat\rho_k}
					}{
						2\hat\rho_{k+1}
						\bigl[
							\Lambda_{k,2\hat\rho_{k+1}}
							-
							(1-\gamma_k\ell_k)
						\bigr]_+
					}
				\right\}.
			\end{equation}
			Due to \eqref{eq:rhokhatgeq} the term \(\frac{\hat\rho_k - \rho_k}{\hat\rho_k}\) in the numerator of the second update is always nonnegative.
			Using this, we show in the next lemma that the update rule of \refBY{} always complies with the above bound.
			In fact, while it is possible to retain the term \(\frac{\hat\rho_k - \rho_k}{\hat\rho_k}\) in the update of \refBY{}, for the sake of a neater expression we opted to omit it at the cost of introducing slight conservatism.

			\begin{lemma}\label{thm:BY:leq}%
				Suppose that \cref{ass:basic} holds, and consider the iterates generated by Bregman proximal gradient iterations \eqref{eq:BPG} with \(\gamma_k\) and \(\hat\rho_k\) selected according to \refBY.
				Then, both \eqref{eq:rhokhatgeq} and \eqref{eq:BY:rhok*+} are satisfied for any \(k\in\N\).

				Moreover, denoting
				\begin{equation}\label{eq:UkBY}
					\U_k(x)
				\coloneqq
					\D_{\kernel}(x,x^k)
					+
					\gamma_k(1+\hat\rho_k)P_{k-1}(x)
					+
					(1-\tfrac{\rho_k}{2\hat\rho_k})
					\D_{\kernel}(x^k,x^{k-1}),
				\end{equation}
				one has that
				\begin{equation}\label{eq:UkBY:leq}
					\U_{k+1}(x)
				\leq
					\U_k(x)
					-
					\gamma_k(
						\smash{
							\overbrace{1+\hat\rho_k-\rho_{k+1}\hat\rho_{k+1}}^{\geq\hat\rho_k-\rho_k\geq0}
						}
					)P_{k-1}(x)
				\end{equation}
				holds for any \(x\in\dom\varphi\cap\dom\kernel\) and \(k\in\N\).
			\end{lemma}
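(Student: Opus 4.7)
My plan is to establish the three claims in sequence, with the first two following quickly from the definition of \refBY{} and the third requiring substituting specific parameter choices into \eqref{eq:UkBYdt:leq}. Throughout, I use inductively that $\rhok\leq\rhok[\hat]$ (the first claim at the previous iteration). The bound \eqref{eq:rhokhatgeq} is then immediate, since $\rhok*$ is defined as a minimum with first argument $\rhok*[\hat]$. For \eqref{eq:BY:rhok*+}, I verify the two components of the min separately. The first, $\rhok*\leq\frac{1+\rhok[\hat]}{\rhok*[\hat]}$, follows from $\rhok*\leq\rhok*[\hat]$ combined with $(\rhok*[\hat])^2=1+\rhok\leq 1+\rhok[\hat]$. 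The second is weaker than the bound imposed by the second argument in the min defining $\rhok*$, since the numerator factor $1+\frac{\rhok[\hat]-\rhok}{\rhok[\hat]}\geq 1$ by the inductive hypothesis; hence any $\rhok*$ satisfying the algorithm's bound also satisfies the one in \eqref{eq:BY:rhok*+}.

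For the descent inequality \eqref{eq:UkBY:leq}, I instantiate \eqref{eq:UkBYdt:leq} at $\delk=2\rhok[\hat]$ and $\thetk=\rhok[\hat]$ (hence $\delk*=2\rhok*[\hat]$ and $\thetk*=\rhok*[\hat]$). Under these choices, the left-hand side becomes $\UkBY*(x)$, and the portion $\hUk(x)-\tfrac{\rhok}{\delk}\D(\xk,\xk')$ of the right-hand side collapses to $\UkBY(x)$, both by direct inspection of \eqref{eq:UkBY}. The coefficient of $P_{k-1}(x)$ reduces to $1+\rhok[\hat]-\rhok*\rhok*[\hat]$; using $\rhok*\leq\rhok*[\hat]$ gives $\rhok*\rhok*[\hat]\leq(\rhok*[\hat])^2=1+\rhok$, which furnishes the overbraced lower bound $1+\rhok[\hat]-\rhok*\rhok*[\hat]\geq\rhok[\hat]-\rhok\geq 0$.

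The core technical step, and the only real obstacle, is to show that the bracketed coefficient multiplying $\D(\xk,\xk')$ on the right-hand side of \eqref{eq:UkBYdt:leq} is nonnegative, so that this term can be dropped altogether. After substitution, it reads
\[
	\bigl(1 - \tfrac{\rhok}{2\rhok[\hat]}\bigr)
	-
	\rhok*\rhok*[\hat]
	\tfrac{1+\ak}{\ak}
	\bigl[\Lamk_{2\rhok*[\hat]}-(1-\gamk\lk)\bigr].
\]
If the inner bracket is nonpositive, nonnegativity is immediate since $1-\tfrac{\rhok}{2\rhok[\hat]}\geq\tfrac12$ by $\rhok\leq\rhok[\hat]$. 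Otherwise, the positive part in the min defining $\rhok*$ coincides with the bracket itself, and the defining inequality for $\rhok*$ rearranges — via the identity $1+\tfrac{\rhok[\hat]-\rhok}{\rhok[\hat]}=2\bigl(1-\tfrac{\rhok}{2\rhok[\hat]}\bigr)$ — into exactly the required nonnegativity. Dropping this term in \eqref{eq:UkBYdt:leq} then delivers \eqref{eq:UkBY:leq}.
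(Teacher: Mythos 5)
Your proposal is correct and follows essentially the same route as the paper: derive \eqref{eq:rhokhatgeq} and \eqref{eq:BY:rhok*+} directly from the \refBY{} update rule, then specialize \eqref{eq:UkBYdt:leq} at $\delk=2\rhok[\hat]$, $\thetk=\rhok[\hat]$ and drop the nonnegative $\D(\xk,\xk')$ term. You are merely more explicit than the paper's terse closing sentence, in particular by splitting the nonnegativity of the bracketed coefficient into the two cases according to the sign of $\Lamk_{2\rhok*[\hat]}-(1-\gamk\lk)$ and invoking the identity $1+\tfrac{\rhok[\hat]-\rhok}{\rhok[\hat]}=2\bigl(1-\tfrac{\rhok}{2\rhok[\hat]}\bigr)$, which is exactly what makes \eqref{eq:BY:rhok*+} equivalent to the required bound.
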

			\begin{proof}
				The update \eqref{eq:BY:rhok*} clearly ensures that \(\rho_{k+1}\leq\hat\rho_{k+1}\) always holds.
				Note that the second element in the minimum within \eqref{eq:BY:rhok*+} coincides with that in \eqref{eq:BY:rhok*};
				moreover, since \(\rho_{k+1}\leq\hat\rho_{k+1}\) one has that
				\(
					\rho_{k+1}\hat\rho_{k+1}
				\leq
					\hat\rho_{k+1}^2
				=
					1+\rho_k
				\leq
					1+\hat\rho_k
				\),
				altogether confirming that the validity of \eqref{eq:BY:rhok*+}.

				Observe further that the bound \(\hat\rho_k\geq\rho_k\) ensures that both elements in the minimum of \eqref{eq:BY:rhok*} are strictly positive, and thus so are the generated stepsizes \(\gamma_k\).
				Finally, \eqref{eq:UkBY:leq} follows from \eqref{eq:UkBYdt:leq} with the specified choices of \(\vartheta_k\) and \(\delta_k\).
			\end{proof}

		\subsection{A merit function for \refFNEa}
			In case \(\kernel\) has a symmetry coefficient \(\alpha>0\), we may leverage the bound \eqref{eq:Bk:FNEa} for \(B_{k+1}\).
			By doing so, the inequality \eqref{eq:ineq0} in \cref{thm:eq} becomes
			\begin{multline*}
				\widehat{\U}_{k+1}(x)
			\leq
				\widehat{\U}_k(x)
				-
				\gamma_k(1+\vartheta_k-\rho_{k+1}\vartheta_{k+1})P_{k-1}(x)
			\\
				-
				\left\{
					\tfrac{\alpha_k}{1+\alpha_k}
					-
					\rho_{k+1}
					\left[
						\tfrac{
							(1+\alpha)\delta_{k+1}^2
						}{
							(1+\alpha)\delta_{k+1}-\rho_{k+1}
						}
						\tfrac{1}{2}
						\Lambda_{k,\delta_{k+1}}
						-
						\vartheta_{k+1}(1-\gamma_k\ell_k)
					\right]
				\right\}
				\DD_{\kernel}(x^k,x^{k-1}).
			\end{multline*}

			Once again, we introduce a parameter \(\hat\rho_{k+1}\) (to be specified later) that satisfies \eqref{eq:rhokhatgeq} whilst based on information available at iteration \(k\).
			We can conveniently set \(\delta_{k+1}=\vartheta_{k+1}=\frac{2}{1+\alpha}\hat\rho_{k+1}\), and use the bound
			\(
				(1+\alpha)\delta_{k+1}-\rho_{k+1}
			\geq
				(1+\alpha)\delta_{k+1}-\hat\rho_{k+1}
			\)
			to simplify the coefficient of \(\Lambda_{k,\delta_{k+1}}\).
			Combined with the fact that
			\(
				\tfrac{\alpha_k}{1+\alpha_k}
			\geq
				\tfrac{\alpha}{1+\alpha}
			\),
			the inequality simplifies as
			\begin{align*}
				\U^{\alpha}_{k+1}(x)
			\coloneqq{} &
				\D_{\kernel}(x,x^{k+1})
				+
				\D_{\kernel}(x^{k+1},x^k)
				+
				\gamma_{k+1}\left(1+\tfrac{2}{1+\alpha}\hat\rho_{k+1}\right)P_k(x)
			\\
			\leq{} &
				\U^{\alpha}_k(x)
				-
				\gamma_k(1+\tfrac{2}{1+\alpha}\hat\rho_k-\tfrac{2}{1+\alpha}\hat\rho_{k+1}\rho_{k+1})P_{k-1}(x)
			\\
			&
				-
				\left\{
					\tfrac{\alpha}{1+\alpha}
					-
					\tfrac{2}{1+\alpha}
					\hat\rho_{k+1}
					\rho_{k+1}
					\left[
						\Lambda_{k,\delta_{k+1}}
						-
						(1-\gamma_k\ell_k)
					\right]
				\right\}
				\DD_{\kernel}(x^k,x^{k-1}),
			\end{align*}
			where \(\U^{\alpha}_k(x)\) corresponds to \(\widehat{\U}_k(x)\) as in \cref{thm:eq} with \(\vartheta_k=\tfrac{2}{1+\alpha}\hat\rho_k\).
			By imposing negativity of the coefficients of \(P_{k-1}(x)\) and \(\DD_{\kernel}(x^k,x^{k-1})\) as done in the previous subsection, the following analogue of \cref{thm:BY:leq} for \refFNEa{} is derived.

			\begin{lemma}\label{thm:FNEa:leq}%
				Additionally to \cref{ass:basic}, suppose that \(\kernel\) has symmetry coefficient \(\alpha>0\) and consider the iterates generated by Bregman proximal gradient iterations \eqref{eq:BPG} with \(\gamma_k\) and \(\hat\rho_k\) selected according to \refFNEa.
				Then, denoting
				\begin{equation}\label{eq:UkFNEa}
					\U^{\alpha}_k(x)
				\coloneqq
					\D_{\kernel}(x,x^k)
					+
					\D_{\kernel}(x^k,x^{k-1})
					+
					\gamma_k\left(1+\tfrac{2}{1+\alpha}\hat\rho_k\right)P_{k-1}(x),
				\end{equation}
				one has that
				\begin{equation}\label{eq:UkFNEa:leq}
					\U^{\alpha}_{k+1}(x)
				\leq
					\U^{\alpha}_k(x)
					-
					\gamma_k\Bigl(
						\smash{
							\overbrace{
								1+\tfrac{2}{1+\alpha}\hat\rho_k-\tfrac{2}{1+\alpha}\hat\rho_{k+1}\rho_{k+1}
							}^{
								\geq
								\tfrac{2}{1+\alpha}(\hat\rho_k-\rho_k)
								\geq
								0
							}
						}
					\Bigr)P_{k-1}(x)
				\end{equation}
				holds for any \(x\in\dom\varphi\cap\dom\kernel\) and \(k\in\N\).
			\end{lemma}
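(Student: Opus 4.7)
The plan is to follow the same template as in the proof of \cref{thm:BY:leq}, but now starting from the \refFNEa{} bound \eqref{eq:Bk:FNEa} rather than \eqref{eq:Bk:BY}. First, I would observe that the update rule \eqref{eq:FNEa:rhok*} forces \(\rhok*\leq\rhok*[\hat]\), so with the choice \(\delk*=\tfrac{2}{1+\alpha}\rhok*[\hat]\) one has \((1+\alpha)\delk*-\rhok*\geq 2\rhok*[\hat]-\rhok*[\hat]=\rhok*[\hat]>0\), which is precisely the condition needed to legitimize the use of \eqref{eq:Bk:FNEa}. Plugging this bound into the identity \eqref{eq:ineq0} from \cref{thm:eq}, together with \(\thetk*=\tfrac{2}{1+\alpha}\rhok*[\hat]\) and the monotone bound \(\tfrac{\ak}{1+\ak}\geq\tfrac{\alpha}{1+\alpha}\) (which follows from \(\ak\geq\alpha\) and monotonicity of \(t\mapsto t/(1+t)\)), yields exactly the displayed inequality preceding the lemma.

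Next, the descent inequality \eqref{eq:UkFNEa:leq} will follow once I verify that the two coefficients in that display are of the right sign. For the coefficient of \(\DD(\xk,\xk')\), nonnegativity is equivalent to
\[
	\tfrac{\alpha}{1+\alpha}
	\geq
	\tfrac{2}{1+\alpha}\rhok*[\hat]\rhok*
	\bigl[\Lamk_{\delk*}-(1-\gamk\lk)\bigr]_+,
\]
which rearranges to \(\rhok*\leq\tfrac{\alpha}{2\rhok*[\hat][\Lamk_{\delk*}-(1-\gamk\lk)]_+}\), precisely the second element in the minimum of \eqref{eq:FNEa:rhok*}. For the coefficient of \(P_{k-1}(x)\), I would exploit the identity \(\rhok*[\hat]^2=\tfrac{1+\alpha}{2}+\rhok\) together with \(\rhok*\leq\rhok*[\hat]\) to deduce
\[
	\rhok*\rhok*[\hat]\leq\rhok*[\hat]^2=\tfrac{1+\alpha}{2}+\rhok,
\]
which upon multiplication by \(\tfrac{2}{1+\alpha}\) and rearrangement yields both the nonnegativity of the coefficient and the sharper lower bound \(1+\tfrac{2}{1+\alpha}\rhok[\hat]-\tfrac{2}{1+\alpha}\rhok*[\hat]\rhok*\geq\tfrac{2}{1+\alpha}(\rhok[\hat]-\rhok)\geq 0\) claimed in the overbrace.

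Finally, I would note that \(\UkFNEa(x)\) as defined in \eqref{eq:UkFNEa} is nothing but \(\hUk(x)\) from \cref{thm:eq} with the particular choice \(\thetk=\tfrac{2}{1+\alpha}\rhok[\hat]\), so the descent inequality just derived is exactly \eqref{eq:UkFNEa:leq}. The only genuinely delicate point is the first one: ensuring that the denominator \((1+\alpha)\delk*-\rhok*\) in \eqref{eq:Bk:FNEa} remains strictly positive under the proposed update. All remaining steps are algebraic manipulations leveraging the specific quadratic relation \(\rhok*[\hat]^2=\tfrac{1+\alpha}{2}+\rhok\), which is why the square root in the definition of \(\rhok*[\hat]\) in \eqref{eq:FNEa:rhok*} is tuned exactly this way.
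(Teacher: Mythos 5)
Your proposal is correct and follows essentially the same route as the paper: the paper's "proof" of this lemma is precisely the derivation preceding its statement, namely plugging the bound \eqref{eq:Bk:FNEa} into \eqref{eq:ineq0} with \(\delk*=\thetk*=\tfrac{2}{1+\alpha}\rhok*[\hat]\), using \((1+\alpha)\delk*-\rhok*\geq\rhok*[\hat]>0\) and \(\tfrac{\ak}{1+\ak}\geq\tfrac{\alpha}{1+\alpha}\), and then checking that the update \eqref{eq:FNEa:rhok*} together with \(\rhok*[\hat]^2=\tfrac{1+\alpha}{2}+\rhok\) and \(\rhok\leq\rhok[\hat]\) makes both coefficients nonnegative, exactly as you argue. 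Your extra care in verifying the strict positivity of the denominator \((1+\alpha)\delk*-\rhok*\) is the same observation the paper makes implicitly when invoking that bound.
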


	\section{Convergence analysis}\label{sec:cvg}%
		This section is devoted to proving \cref{thm:main} in its entirety.
		We first provide some technical lemmas that will be invoked in the proofs.
		The first one is a direct consequence of \cref{thm:BY:leq,thm:FNEa:leq}, and its statement closely patterns similar results in the Euclidean setting.
		The simple proof is given in the appendix.

		\begin{lemma}\label{thm:descent}%
			Suppose that \cref{ass:basic} holds, and consider the iterates generated by Bregman proximal gradient iterations \eqref{eq:BPG} with \(\gamma_k\) and \(\hat\rho_k\) selected according to \refBY.
			Then, with \(\U_k\) as in \eqref{eq:UkBY},
			the following hold for any \(x\in\dom\kernel\) satisfying \(\varphi(x)\leq\inf_{k\in\N}\varphi(x^k)\):
			\begin{enumerate}
			\item \label{thm:SD}%
				\(\bigl(\U_k(x)\bigr)_{k\in\N}\) decreases and converges to a finite value.
			\item \label{thm:Pkmin}%
				\(P_K^{\rm min}(x)\leq\frac{\U_0(x)}{\sum_{k=1}^{K+1}\gamma_k}\) for every \(K\geq 1\), where \(P_K^{\rm min}(x)\coloneqq\min_{k\leq K}P_k(x)\).
			\end{enumerate}
			When \(\kernel\) has symmetry coefficient \(\alpha>0\), all remains true for the updates of \refFNEa, with \(\U_k\gets\U^{\alpha}_k\) as in \eqref{eq:UkFNEa}.
		\end{lemma}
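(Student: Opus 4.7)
Both conclusions stem from the descent inequality \eqref{eq:UkBY:leq}, combined with the observation that $P_{k-1}(x)=\varphi(\xk')-\varphi(x)\geq 0$ holds for every $k\geq 1$ under the standing hypothesis $\varphi(x)\leq\inf_{k\in\N}\varphi(\xk)$.

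For part (i), \cref{thm:BY:leq} already records that the decrement factor $1+\rhok[\hat]-\rhok*\rhok*[\hat]$ is bounded below by $\rhok[\hat]-\rhok\geq 0$; hence \eqref{eq:UkBY:leq} yields the monotonicity $\UkBY*(x)\leq\UkBY(x)$. Termwise nonnegativity of \eqref{eq:UkBY} is then immediate: $\D(x,\xk)\geq 0$, the middle summand $\gamk(1+\rhok[\hat])P_{k-1}(x)\geq 0$, and the coefficient $1-\rhok/(2\rhok[\hat])$ in front of $\D(\xk,\xk')$ is at least $\tfrac{1}{2}$ thanks to $\rhok\leq\rhok[\hat]$. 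A decreasing sequence bounded below by $0$ converges to a finite limit, which proves (i).

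For part (ii), my plan is to telescope \eqref{eq:UkBY:leq} from $k=1$ to $K+1$, invoking the monotonicity established in (i), to obtain
\[
	\sum_{k=1}^{K+1}\gamk\bigl(1+\rhok[\hat]-\rhok*\rhok*[\hat]\bigr)P_{k-1}(x)
	\leq
	\UkBY_1(x)-\UkBY_{K+2}(x)
	\leq
	\UkBY_0(x).
\]
The algebraic centerpiece is the identity $\gamk(1+\rhok[\hat]-\rhok*\rhok*[\hat])=\gamk+\gamk\rhok[\hat]-\gamk*\rhok*[\hat]$, obtained from $\gamk\rhok*=\gamk*$, which lets me rewrite the left-hand side as $\sum_{k=1}^{K+1}\gamk P_{k-1}(x)$ plus a residual sum of partially telescoping $\rhok[\hat]$-contributions. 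Via summation-by-parts, that residual reduces to boundary terms which, combined with the nonnegativity of both $\UkBY_{K+2}(x)$ and of the $P_{k-1}(x)$, can be shown to be nonnegative, leaving the clean bound $\sum_{k=1}^{K+1}\gamk P_{k-1}(x)\leq\UkBY_0(x)$. Factoring out $P_K^{\rm min}(x)\leq P_{k-1}(x)$ for every $k\leq K+1$ then yields the announced rate. The \refFNEa{} case follows verbatim from \eqref{eq:UkFNEa:leq}, whose decrement coefficient $1+\tfrac{2}{1+\alpha}(\rhok[\hat]-\rhok*[\hat]\rhok*)$ admits the same telescoping decomposition modulo the $\tfrac{2}{1+\alpha}$ scaling of the $\rhok[\hat]$-terms.

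The principal technical hurdle lies precisely in the telescoping bookkeeping of (ii). Unlike the constant-stepsize regime, where the per-step factor $1+\rhok[\hat]-\rhok*\rhok*[\hat]$ equals $1$ and the sum bound is immediate, the adaptive setting permits values below $1$ when the stepsizes grow, so no termwise inequality of the form $\gamk(1+\rhok[\hat]-\rhok*\rhok*[\hat])\geq\gamk$ is available. Everything must hinge on the exact cancellation between the adjacent $\gamk\rhok[\hat]$ and $\gamk*\rhok*[\hat]$ quantities and on a careful handling of the boundary contributions, in order to recover the target $\sum_k\gamk$ denominator in the rate bound.
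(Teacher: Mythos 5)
Your part (i) is fine and coincides with the paper's argument: nonnegativity of each summand of $\U_k(x)$ under the hypothesis $\varphi(x)\leq\inf_k\varphi(x^k)$, plus the descent inequality \eqref{eq:UkBY:leq} with nonnegative decrement coefficient, gives a nonincreasing, bounded-below sequence.

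Part (ii), however, has a genuine gap. Your plan rests on the claim that, after writing $\gamma_k(1+\hat\rho_k-\rho_{k+1}\hat\rho_{k+1})=\gamma_k+\gamma_k\hat\rho_k-\gamma_{k+1}\hat\rho_{k+1}$, the residual $\sum_{k=1}^{K+1}(\gamma_k\hat\rho_k-\gamma_{k+1}\hat\rho_{k+1})P_{k-1}(x)$ ``reduces to boundary terms'' and is nonnegative, which would yield the clean bound $\sum_{k=1}^{K+1}\gamma_k P_{k-1}(x)\leq\U_0(x)$. It does not: since the weights $\gamma_k\hat\rho_k$ vary, Abel summation leaves the interior sum $\sum_{k=2}^{K+1}\gamma_k\hat\rho_k\,[\varphi(x^{k-1})-\varphi(x^{k-2})]$ together with a \emph{negative} boundary term of the form $-\gamma_{K+2}\hat\rho_{K+2}P_K(x)$, and the interior differences $\varphi(x^{k-1})-\varphi(x^{k-2})$ have no sign because the iterations are not monotone in cost. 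So the residual cannot be discarded, and the clean weighted bound you aim for is a strictly stronger (ergodic-type) statement than the lemma needs, with no apparent proof along these lines. The paper's proof avoids the issue by reversing the order of operations: because every coefficient $\gamma_k(1+\hat\rho_k-\rho_{k+1}\hat\rho_{k+1})$ is nonnegative (the overbrace in \eqref{eq:UkBY:leq}), one may first lower-bound each $P_{k-1}(x)$ by $P_K^{\rm min}(x)$, so that the subsequent telescoping involves only scalar coefficients, $\sum_{k=1}^K(\gamma_k+\gamma_k\hat\rho_k-\gamma_{k+1}\hat\rho_{k+1})=\sum_{k=1}^K\gamma_k+\gamma_1\hat\rho_1-\gamma_{K+1}\hat\rho_{K+1}$; the leftover negative term $-\gamma_{K+1}\hat\rho_{K+1}$ is then cancelled by \emph{not} dropping $\U_{K+1}(x)\geq0$ outright but retaining its summand $\gamma_{K+1}(1+\hat\rho_{K+1})P_K(x)\geq\gamma_{K+1}(1+\hat\rho_{K+1})P_K^{\rm min}(x)$, which at the same time supplies the $\gamma_{K+1}$ needed to reach the denominator $\sum_{k=1}^{K+1}\gamma_k$ (and the same bookkeeping, with the factor $q$ multiplying the $\hat\rho$-terms, covers the \refFNEa{} case). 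If you restructure your telescoping this way, the rest of your argument goes through.
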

		\begin{proof}
			See \cref{proof:thm:descent}.
		\end{proof}

		\subsection{Proof outline}\label{sec:outline}%
			The existence of \(x\in\dom\kernel\) such that \(\varphi(x)\leq\varphi(x^k)\) for all \(k\) is fundamental for the validity of \cref{thm:descent}.
			When \(\dom\kernel=\R^n\), thus in the Euclidean case in particular, this is granted.
			More generally, the possibility of solutions existing merely on the boundary of \(\dom\kernel\) renders this statement inapplicable.
			Nevertheless, this result will prove fundamental in demonstrating \cref{thm:main} in its full generality, and particularly the general claim in \eqref{eq:main:inf}.

			Our proof strategy closely follows the ideas of the seminal work \cite{bauschke2017descent}, where it was shown that
			\[
				\inf_k \varphi(x^k)\le \varphi(u)
			\qquad
				\forall u\in\dom\kernel,
			\]
			and hence in particular that \(\inf_k\varphi(x^k)=\inf_C\varphi\).
			A key departure, however, is that the proof in \cite{bauschke2017descent} applies uniformly regardless of whether or not minimizers exist in \(C\).
			In contrast, our approach necessarily separates the analysis into two distinct regimes.
			The first is based on the existence of an interior point whose objective value lower bounds all iterates; this condition is subsequently shown to characterize the presence of minimizers in the interior.
			The second regime is simply its complement, which precisely corresponds to the case where \(\inf_k \varphi(x^k)=\inf_C\varphi\), with infimum on the right-hand side not attained.
			A conceptual roadmap of our proof strategy can be summarized as follows:
			\begin{itemize}[label={}, leftmargin=0pt]
			\item
				\emph{Special case:} an \(x\in C\) exists such that \(\varphi(x)\leq\inf_{k\in\N}\varphi(x^k)\) as in \cref{thm:descent}.
				\begin{itemize}[widestL]
				\item
					Then \(P_k(x)\ge0\), and hence
					\(
						\D_\kernel(x,x^k)\le\U_k(x)
					\);
					by \cref{thm:SD}, this implies boundedness of \((\D_\kernel(x,x^k))_{k\in\N}\)
					(in fact, all this even if \(x\notin C\)).
				\item
					Since \(x\in C\), \cref{fact:D} implies that \((x^k)_{k\in\N}\) is bounded and bounded away from the boundary of \(C\).
				\item
					The following \cref{thm:Lamkto1} is then applicable and yields \(\gamma_k\not\to0\), from which \cref{thm:Pkmin} implies
					\(
						\inf_k\varphi(x^k)\leq\varphi(x)
					\).
				\item
					By its arbitrariness (and since \(\varphi\) is lsc), any such \(x\) must be optimal.
				\end{itemize}

			\item
				\emph{Complementary case:} no such \(x\) exists.
				\begin{itemize}[widestL]
				\item
					This means that
					\(
						\inf_k\varphi(x^k)=\inf_C\varphi
					\),
					which by convexity further coincides with
					\(
						\inf_{\overline C}\varphi
					\).
				\end{itemize}
			\end{itemize}
			In both cases, standard arguments are then used to establish additional convergence properties.
			Note that the special case turns out to be equivalent to the existence of minimizers in \(C\), and in turn the complementary one corresponds to the case in which solutions exist merely on the boundary.

		\subsection{Auxiliary lemmas}%
			A key to address the ``special case'' is the following lemma, which applies when the iterates \(x^k\) stay bounded away from the boundary of \(\dom\kernel\).
			Roughly speaking, under this assumption it ensures that whenever stepsizes drop below a certain threshold, the updates in both \eqref{eq:BY:rhok*} and \eqref{eq:FNEa:rhok*} reduce to \(\gamma_{k+1}=\hat\rho_{k+1}\gamma_k\), see \cref{rem:1/0}, and thus increase.
			This behavior is at the heart of the careful choice of parameters \(\vartheta_k=\hat\rho_k\) for \refBY{} and \(\vartheta_k=\frac{1+\alpha}{2\alpha}\hat\rho_k\) for \refFNEa{}.

			\begin{lemma}\label{thm:Lamkto1}%
				Suppose that \cref{ass:basic} holds, and consider a sequence \((x^k)_{k\in\N}\) generated by Bregman proximal gradient iterations \eqref{eq:BPG} with stepsizes \(\gamma_k>0\).
				Suppose that \((x^k)_{k\in\N}\) is contained in a compact set \(\K\subset C\), and consider the ratio
				\[
					\Lambda_{k,\delta_{k+1}}
				=
					\frac{
						2\D_{\kernel^*}\bigl(\nabla\kernel(x^k)+\delta_{k+1}\bigl[H_k(x^k)-H_k(x^{k-1})\bigr],\nabla\kernel(x^k)\bigr)
					}{
						\delta_{k+1}^2
						\DD_{\kernel}(x^k,x^{k-1})
					}
				\]
				as in \eqref{eq:Lamk}, where \((\delta_k)_{k\in\N}\subset\R_{++}\) is a bounded sequence.
				If \(\gamma_k\to0\), then \(\Lambda_{k,\delta_{k+1}}\to1\).
			\end{lemma}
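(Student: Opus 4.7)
The plan is to express both the numerator and denominator of $\Lamk_{\delk*}$ via integral representations of $\nabla^2\kernel^*$, and to exploit compactness together with $\gamk\to 0$ to show that the resulting matrix averages collapse to a common Hessian, so that the ratio reduces to comparing two quadratic forms in vectors that differ only by $O(\gamk)$.

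First, I would collect the uniform bounds implied by the hypotheses: compactness of $\K\subset C$ together with $\nabla^2\kernel\succ 0$ on $C$ yields $\sigma I\preceq\nabla^2\kernel(x)\preceq MI$ on $\K$ for some $0<\sigma\leq M$, hence bi-Lipschitz continuity of $\nabla\kernel$ on $\K$ and two-sided boundedness of $\nabla^2\kernel^*$ on the compact region enclosing all relevant evaluation points (itself bounded because $\delk$, $v_k\coloneqq \Hk(\xk)-\Hk(\xk')$, and $u_k\coloneqq \nabla\kernel(\xk)-\nabla\kernel(\xk')$ are bounded on $\K$), while local relative smoothness of $f$ delivers $L$-Lipschitz continuity of $\nabla f$ on $\K$. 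Setting $w_k\coloneqq \nabla f(\xk)-\nabla f(\xk')$, one has $v_k=u_k-\gamk w_k$ with $|w_k|\leq L|\xk-\xk'|\leq (L/\sigma)|u_k|$.

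The crux is to show $|\xk-\xk'|\to 0$ as $\gamk\to 0$. The subgradient characterization \eqref{eq:subgradvarphi} of the BPG update gives $v_k=-\gamk\,\nabla*\varphi(\xk)$ with $\nabla*\varphi(\xk)\in\partial\varphi(\xk)$; local boundedness of the subdifferential of the convex function $\varphi$ at interior iterates forces $\nabla*\varphi(\xk)$ to remain bounded, so $v_k=O(\gamk)\to 0$. Combining with $u_k=v_k+\gamk w_k$ and the Lipschitz bound above yields $(1-\gamk L/\sigma)|u_k|\leq |v_k|$, hence $u_k\to 0$ and therefore $|\xk-\xk'|\to 0$.

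Finally, using the standard integral representation of a Bregman distance applied to $\kernel^*$ together with the identity $\DD(\xk,\xk')=\DD*(\nabla\kernel(\xk),\nabla\kernel(\xk'))$ from \cref{fact:D}, the numerator and denominator of $\Lamk_{\delk*}$ become quadratic forms in $v_k$ and $u_k$ against two matrix averages of $\nabla^2\kernel^*$ along short segments containing the shifts $\delk* v_k$ and $u_k$, respectively. Continuity of $\nabla^2\kernel^*$ together with $u_k,\delk* v_k\to 0$ ensures both averages converge uniformly to $\nabla^2\kernel^*(\nabla\kernel(\xk))$; coupled with $|v_k-u_k|/|u_k|\leq\gamk L/\sigma\to 0$ and the uniform positive lower bound on $\nabla^2\kernel^*$, this yields $\Lamk_{\delk*}\to 1$. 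The main obstacle will be the bound on $\nabla*\varphi(\xk)$ in the second step: it requires the iterates to remain in the interior of $\dom\varphi$ (not only of $\dom\kernel$), which is not directly granted by $\K\subset C$ alone and may call for an additional argument, for instance invoking that $\K\subset\interior\dom g$ or that the BPG descent inequality provides control of the form $\D(\xk,\xk')\leq O(\gamk)$.
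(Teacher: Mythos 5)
Your overall strategy --- writing both Bregman distances as quadratic forms in \(\nabla^2\kernel*\) via mean-value/integral representations, bounding \(\norm{\nabla f(\xk)-\nabla f(\xk')}\) by a multiple of \(\norm{\nabla\kernel(\xk)-\nabla\kernel(\xk')}\) through local relative smoothness and compactness, and concluding once the evaluation points collapse to a common Hessian --- is exactly the paper's expansion. The genuine gap is in the step you yourself flag as the crux: you derive \(\xk-\xk'\to0\) from boundedness of the algorithmic subgradient \(\nabla*\varphi(\xk)=-\tfrac{1}{\gamk}\bigl[\Hk(\xk)-\Hk(\xk')\bigr]\), invoking local boundedness of \(\partial\varphi\) at the iterates. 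Under the lemma's hypotheses this is not available: the assumption is only \(\xk\in\K\subset C=\interior\dom\kernel\), and nothing places the iterates in the interior of \(\dom g\). In the paper's own simplex experiment \(g=\indicator_{\Delta_n}\), so \(\dom g\) has empty interior and \(\partial\varphi(\xk)\) contains an entire normal cone; there is then no a priori bound on the particular subgradient selected by the algorithm, and in fact boundedness of \(\tfrac{1}{\gamk}\bigl[\Hk(\xk)-\Hk(\xk')\bigr]\) is essentially equivalent to what you are trying to prove. So the argument as written would fail precisely in the constrained/Bregman situations the lemma is designed for; your acknowledgment of the issue does not repair it.

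The paper closes this step differently, and along the lines of the fallback you mention but do not execute: since \(\gamk\to0\), eventually \(\gamk<1/L_{f,\K}^{\kernel}\), and then the standard sufficient-decrease theory for Bregman proximal gradient under relative smoothness on \(\K\) applies, yielding convergence of \(\seq{\xk}\) to some \(x\in\K\) (in particular \(\D(\xk*,\xk)\to0\), hence \(\xk-\xk'\to0\)) without any interiority requirement on \(\dom g\). With that in hand, the two mean-value points \(\xi^k,\eta^k\) (using also boundedness of \(\seq{\delk}\)) converge to \(\nabla\kernel(x)\), continuity of \(\nabla^2\kernel*\) kills the Hessian-difference term, and the remaining cross and quadratic terms vanish because they carry explicit factors \(\gamk\) and \(\gamk^2\) with coefficients bounded via \(L_{\kernel,\K}\) and \(L_{f,\K}^{\kernel}\) --- the same bounds you set up. Replacing your subgradient-boundedness step with this descent-based argument makes your proof essentially coincide with the paper's.
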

			\begin{proof}
				For notational conciseness, let us denote \(u^k\coloneqq\nabla\kernel(x^k)-\nabla\kernel(x^{k-1})\) and \(v^k\coloneqq\nabla f(x^k)-\nabla f(x^{k-1})\), so that \(H_k(x^k)-H_k(x^{k-1})=u^k-\gamma_k v^k\).
				For any \(k\in\N\) there exists
				\(\xi^k\) on the line segment between \(\nabla\kernel(x^k)\) and \(\nabla\kernel(x^k)+\delta_{k+1}(H_k(x^k)-H_k(x^{k-1}))\)
				and
				\(\eta^k\) on the line segment between \(\nabla\kernel(x^k)\) and \(\nabla\kernel(x^{k-1})\)
				such that
				\begin{align}
				\label{eq:Lamknum}
					\Lambda_{k,\delta_{k+1}}
				={} &
					\frac{
						2
						\D_{\kernel^*}\bigl(\nabla\kernel(x^k)+\delta_{k+1}\bigl[H_k(x^k)-H_k(x^{k-1})\bigr],\nabla\kernel(x^k)\bigr)
					}{
						\delta_{k+1}^2
						\DD_{\kernel}(x^k,x^{k-1})
					}
				\\
				\nonumber
				={} &
					\frac{
						\innprod{
							\nabla^2\kernel^*(\xi_k)
							\bigl[H_k(x^k)-H_k(x^{k-1})\bigr]
						}{
							H_k(x^k)-H_k(x^{k-1})
						}
					}{
						\innprod{
							\nabla^2\kernel^*(\eta_k)
							u^k
						}{
							u^k
						}
					}
				\\
				\nonumber
				={} &
					\frac{
						\innprod{
							\nabla^2\kernel^*(\xi_k)
							u^k
						}{
							u^k
						}
					}{
						\innprod{
							\nabla^2\kernel^*(\eta_k)
							u^k
						}{
							u^k
						}
					}
					-
					2\gamma_k
					\frac{
						\innprod{
							\nabla^2\kernel^*(\xi_k)
							u^k
						}{
							v^k
						}
					}{
						\innprod{
							\nabla^2\kernel^*(\eta_k)
							u^k
						}{
							u^k
						}
					}
					+
					\gamma_k^2
					\frac{
						\innprod{
							\nabla^2\kernel^*(\xi_k)
							v^k
						}{
							v^k
						}
					}{
						\innprod{
							\nabla^2\kernel^*(\eta_k)
							u^k
						}{
							u^k
						}
					}.
				\end{align}
				Since \(\kernel\) is twice continuously differentiable with \(\nabla^2\kernel\succ0\) on \(C\), and \(\K\subset C\) is compact and convex, one has that
				\[
				\textstyle
					L_{\kernel,\K}
				\coloneqq
					\sup_{\K}\norm{\nabla^2\kernel}
				=
					\left(\inf_{\nabla\kernel(\K)}\lambda_{\rm min}(\nabla^2\kernel^*)\right)^{-1}
				\]
				is finite (\(L_{\kernel,\K}\) being the Lipschitz modulus of \(\nabla\kernel\) on \(\K\)).
				As such, one has that
				\(
					\innprod{\nabla^2\kernel^*(\eta_k)u^k}{u^k}
				\geq
					L_{\kernel,\K}^{-1}\norm{u^k}^2
				\)
				for all \(k\).
				Moreover, letting \(L_{f,\K}^{\kernel}\) denote a smoothness modulus of \(f\) relative to \(\kernel\) on \(\K\), ensured to exist by \cref{ass:f}, we infer from \cite[Prop. 2.5(ii)]{ahookhosh2021bregman} that
				\(
					\norm{v^k}\leq L_{f,\K}^{\kernel} L_{\kernel,\K}\norm{u^k}
				\)
				holds for all \(k\).
				Therefore,
				\begin{align*}
					\abs{\Lambda_{k,\delta_{k+1}}-1}
				\leq{} &
					\norm{\nabla^2\kernel^*(\xi_k)-\nabla^2\kernel^*(\eta_k)}
					L_{\kernel,\K}
					+
					2\gamma_k
					L_{f,\K}^{\kernel} L_{\kernel,\K}^2
					\norm{\nabla^2\kernel^*(\xi_k)}
				\\
				&
					+
					\gamma_k^2
					L_{\kernel,\K}^3
					(L_{f,\K}^{\kernel})^2
					\norm{\nabla^2\kernel^*(\xi_k)}.
				\end{align*}
				If \(\gamma_k\to0\), then eventually \(\gamma_k<\frac{1}{L_{f,\K}^{\kernel}}\) and standard results ensure that \((x^k)_{k\in\N}\) converges to some point \(x\in\K\).
				In this case, \((\xi^k)_{k\in\N}\) and \((\eta^k)_{k\in\N}\) converge to \(\nabla\kernel(x)\) (the former sequence because \((\delta_k)_{k\in\N}\) is bounded), and by continuity of \(\nabla^2\kernel^*\) on \(\K\) the right-hand side in the above inequality vanishes.
			\end{proof}

			The following is another auxiliary result that considers iterates \eqref{eq:BPG} that remain bounded away from the boundary of \(\dom\kernel\).
			It essentially states that whenever a subsequence \((x^k)_{k\in K}\) converges to a solution, then also the shifted subsequence \((x^{k+1})_{k\in K}\) does provided that the corresponding stepsizes do not diverge.

			\begin{lemma}\label{lem:prox:property:optimal}%
				Additionally to \cref{ass:basic} suppose that \(\argmin_C\varphi\neq\emptyset\).
				Let a sequence \((y^k)_{k\in\N}\) contained in \(C\) and converging to a point \(y^\star\in\argmin_C\varphi\) be fixed, and for every \(k\in\N\) let
				\begin{equation}\label{eq:baryk}
					\bar y^k
				\coloneqq
					\argmin_w\left\{
						g(w)
						+
						\innprod{\nabla f(y^k)}{w-y^k}
						+
						\tfrac{1}{\gamma_{k+1}}\D_{\kernel}(w,y^k)
					\right\}
				\end{equation}
				where \((\gamma_k)_{k\in\N}\) is a bounded sequence of strictly positive stepsizes.
				Then, \(\bar y^k\to y^\star\) and \(\gamma_{k+1}(\varphi(\bar y^k) - \min\varphi)\to0\).
			\end{lemma}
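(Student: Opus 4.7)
The plan is to ground the entire argument in a single master inequality derived from the first-order optimality of $\bar y^k$ in \eqref{eq:baryk}. Applying the standard three-point inequality for the Bregman prox (a direct consequence of \cref{thm:3p} combined with the subgradient inclusion characterizing $\bar y^k$) at the test point $w=y^\star$ gives
\[
    \D(y^\star,\bar y^k)+\D(\bar y^k,y^k)+\gamk*\bigl[g(\bar y^k)-g(y^\star)\bigr]\leq\D(y^\star,y^k)+\gamk*\innprod{\nabla f(y^k)}{y^\star-\bar y^k}.
\]
Since $y^\star\in C\cap\argmin\varphi$ is an \emph{unconstrained} minimizer, it satisfies $-\nabla f(y^\star)\in\partial g(y^\star)$; convexity of $g$ then yields $g(y^\star)-g(\bar y^k)\leq\innprod{\nabla f(y^\star)}{\bar y^k-y^\star}$, and substituting produces the master inequality
\[
    \D(y^\star,\bar y^k)+\D(\bar y^k,y^k)\leq\D(y^\star,y^k)+\gamk*\innprod{\nabla f(y^k)-\nabla f(y^\star)}{y^\star-\bar y^k}.
\]

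For the first claim $\bar y^k\to y^\star$, I would apply Cauchy--Schwarz to the inner product and combine it with boundedness of $\seq{\gamk}$, continuity of $\nabla f$ on $C$ (so $\nabla f(y^k)\to\nabla f(y^\star)$), and the hypothesis $y^k\to y^\star$ (so $\D(y^\star,y^k)\to0$). The right-hand side then grows at most linearly in $\|\bar y^k\|$, while the left-hand side is $1$-coercive in $\bar y^k$ by \cref{fact:D:coercive}; this forces $\seq{\bar y^k}$ to be bounded and, in turn, $\D(y^\star,\bar y^k)\to0$. To upgrade to norm convergence, I would argue by subsequences: any cluster point $\bar y^\infty$ of $\seq{\bar y^k}$ must lie in $C$ (otherwise the boundary-blowup property in \cref{fact:D} would yield $\D(y^\star,\bar y^{k_j})\to\infty$, a contradiction), and then continuity of $\D(y^\star,\cdot)$ on $C$ combined with essential strict convexity of the Legendre kernel $\kernel$ force $\bar y^\infty=y^\star$.

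For the cost-gap claim $\gamk*(\varphi(\bar y^k)-\min\varphi)\to0$, I would combine the three-point inequality above (prior to the subgradient substitution) with convexity of $f$ applied at $y^k$, which yields $f(\bar y^k)-f(y^\star)\leq\innprod{\nabla f(y^k)}{\bar y^k-y^\star}+\D_f(\bar y^k,y^k)$. The two inner-product terms in $\nabla f(y^k)$ cancel exactly, leaving
\[
    \gamk*\bigl(\varphi(\bar y^k)-\varphi(y^\star)\bigr)\leq\gamk*\D_f(\bar y^k,y^k)+\D(y^\star,y^k)-\D(y^\star,\bar y^k)-\D(\bar y^k,y^k).
\]
Since $\bar y^k\to y^\star$, $y^k\to y^\star$, $y^\star\in C$, and $\seq{\gamk}$ is bounded, twice differentiability of $\kernel$ and differentiability of $f$ on $C$ make all four terms on the right vanish; nonnegativity of the left-hand side then delivers the conclusion. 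The main obstacle is the subsequence-style upgrade from $\D(y^\star,\bar y^k)\to0$ to norm convergence of $\bar y^k$: ruling out boundary cluster points is precisely where the Legendre structure of $\kernel$ (essential smoothness coupled with essential strict convexity) plays its decisive role.
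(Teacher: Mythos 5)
Your proof is correct and takes a genuinely different route from the one in the paper. The paper's proof hinges on the \(\nabla\kernel\)-firm nonexpansiveness of the mapping \(T\coloneqq\prox_{\gamk* g}\circ\nabla\kernel*\) (a result imported from \cite{wang2022bregman}), applied to the fixed-point characterizations \(\bar y^k = T(\Hk*(y^k))\) and \(y^\star = T(\Hk*(y^\star))\), which bounds \(\DD(\bar y^k,y^\star)\) directly. You instead derive the master inequality from first principles by combining the three-point identity of \cref{thm:3p} with the subgradient inclusion characterizing \(\bar y^k\) and the interior optimality condition \(-\nabla f(y^\star)\in\partial g(y^\star)\). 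Both arguments then proceed in parallel: Cauchy--Schwarz and boundedness of \(\seq{\gamk}\) make the right-hand side grow at most linearly, \(1\)-coercivity of \(\D(y^\star,{}\cdot{})\) (\cref{fact:D:coercive}) forces boundedness of \(\seq{\bar y^k}\), and essential smoothness (equivalently, the boundary-blowup property in \cref{fact:D}) excludes cluster points outside \(C\), after which strict convexity of \(\kernel\) pins the limit down to \(y^\star\). What your route buys is self-containment: the three-point identity already appears in the paper, whereas the firm-nonexpansiveness lemma for the Bregman prox is external. The modest extra cost is that you must explicitly invoke \(0\in\partial\varphi(y^\star)\), whereas the paper only uses the equivalent fixed-point property of \(y^\star\). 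For the cost-gap claim, your cancellation of the two \(\innprod{\nabla f(y^k)}{{}\cdot{}}\) terms is cleaner than the paper's direct subgradient estimate, which leaves the term \(\innprod{\nabla\kernel(y^k)-\gamk*\nabla f(y^k)-\nabla\kernel(\bar y^k)}{y^\star-\bar y^k}\) in play and argues separately that it vanishes.
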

			\begin{proof}
				Consider the ``left'' Bregman proximal operator \(\prox_{g}:\interior\dom\kernel\to\dom\kernel\) defined as
				\[
					\prox_{g}(y)
				\coloneqq
					\argmin_{w\in\R^n}\{g(w)+\D_{\kernel}(w,y)\}
				\]
				(note that \(\prox_{g}(y)\in\interior\dom\kernel\subseteq\dom\kernel\) for any \(y\in\interior\dom\kernel\), owing to \cref{ass:basic}).
				The mapping \(\prox_{\gamma_{k+1}g}\circ\nabla\kernel^*\) is \(\nabla\kernel\)-firmly-nonexpansive, in the sense that
				\begin{equation}\label{eq:BPG:FNE}
					\DD_{\kernel}(\bar y_1,\bar y_2)
				\leq
					\innprod{\eta_1-\eta_2}{\bar y_1-\bar y_2}
				\end{equation}
				holds for any \(\eta_i\in\R^2\) and \(\bar y_i = \prox_{\gamma_{k+1} g}\circ\nabla\kernel^*(\eta_i)\), \(i=1,2\);
				a proof of this fact can be found in \cite[Lem. 4.2]{wang2022bregman} (see also \cite[Thm. 4.9]{wang2025bregman} for its \emph{equivalence} to convexity of \(g\)).
				Note that \eqref{eq:baryk} can equivalently be written as
				\[
					\bar y^k
				=
					\prox_{\gamma_{k+1} g}\circ\nabla\kernel^*(H_{k+1}(y^k)),
				\]
				where we remind that \(H_{k+1}=\nabla\kernel-\gamma_{k+1}\nabla f\).
				Using \(\nabla\kernel\)-firm nonexpansiveness and recalling that \(y^\star = \prox_{\gamma_{k+1} g}\circ\nabla\kernel^*(H_{k+1}(y^\star))\) we have
				\begin{align*}
					\DD_{\kernel}(\bar y^k,y^\star)
				\stackrel{\text{\clap{\eqref{eq:BPG:FNE}}}}{\leq}{} &
					\innprod{H_{k+1}(y^k) - H_{k+1}(y^\star)}{\bar y^k - y^\star}
				\\
				\leq{} &
					\Bigl(
						\norm{\nabla \kernel(y^k) - \nabla \kernel(y^\star)}
						+
						\gamma_{k+1}\norm{\nabla f(y^k) - \nabla f(y^\star)}
					\Bigr)
					\norm{\bar y^k - y^\star},
				\end{align*}
				which vanishes as \(k\to\infty\).
				Then, by essential strict convexity of \(\kernel\), it follows that \(\bar y^k\to y^\star\).
				Moreover, by the subgradient inequality, for every \(k \in \N\) it holds that
				\begin{align*}
					0
				\leq
					\gamma_{k+1}(\varphi(\bar y^k) - \min\varphi)
				={} &
					\gamma_{k+1}(f(\bar y^k) + g(\bar y^k) - \min\varphi)\\
				\leq{} &
					\gamma_{k+1}(f(\bar y^k) - f(y^\star))
				\\
				&
					-\innprod{\nabla \kernel(y^k) - \gamma_{k+1}\nabla f(y^k) - \nabla \kernel(\bar y^k)}{y^\star - \bar y^k}.
				\end{align*}
				The proof then follows from continuity of \(f\) and the fact that the inner product vanishes, since both \(y^k\) and \(\bar y^k\) converge to \(y^\star\).
			\end{proof}

			In the remainder of this section, we delve into the proof of \cref{thm:main}.
			We begin by establishing \cref{thm:main:C}, which addresses the unconstrained-like setting where all three lemmas introduced above are directly applicable.
			As also apparent from the proof outline of \cref{sec:outline}, despite the simplifying assumptions, this part of the proof is the most technically involved.
			Once established, the general result in \eqref{eq:main:inf}, and ultimately \cref{thm:main:zone}, will follow as comparatively simpler corollaries.

		\subsection{Proof of \texorpdfstring{\cref{thm:main:C}}{Theorem \ref*{thm:main:C}}}\label{proofsec:thm:main:C}%
			This entire subsection is dedicated to proving the following result, which covers \cref{thm:main:C} under more general conditions;
			this higher degree of generality will serve as a fundamental intermediate step in the treatment of the more general setting.

			\begin{theorem}\label{thm:xinC}%
				Additionally to \cref{ass:basic}, suppose that
				\begin{equation}\label{eq:xinC}
					\text{there exists \(x\in C\) such that \(\varphi(x)\leq\inf_{k\in\N}\varphi(x^k)\)}
				\end{equation}
				holds for the iterates \(x^k\) generated by \refBY.
				Then, the sequence \((x^k)_{k\in\N}\) converges to a solution \(x^\star\in\argmin_C\varphi\neq\emptyset\) and \(\varphi(x^k)\to\inf_{\overline C}\varphi\).

				When \(\kernel\) has symmetry coefficient \(\alpha>0\), the same is true for \refFNEa.
			\end{theorem}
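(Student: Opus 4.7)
The plan is to exploit the merit function decrease \eqref{eq:UkBY:leq} (and \eqref{eq:UkFNEa:leq} for \refFNEa, which proceeds by the same argument after substituting $\UkBY$ by $\UkFNEa$) in four stages. \emph{Stage 1 (compactness).} By \cref{thm:SD} applied with the $x$ from the hypothesis, $\seq{\UkBY(x)}$ is non-increasing, so every summand in \eqref{eq:UkBY} is bounded. In particular $\D(x,\xk)$ is bounded; combining 1-coercivity of $\D(x,\cdot)$ (\cref{fact:D:coercive}) with the blow-up property at the boundary (\cref{fact:D}) confines $\seq{\xk}$ to a compact convex set $\K\subset C$, on which $L_{f,\K}^{\kernel}$ is finite, $\ak$ lies in a closed subset of $(0,\infty)$ by essential strict convexity of $\kernel$, and $\lk\leq L_{f,\K}^{\kernel}$.

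\emph{Stage 2 (stepsizes bounded below).} I claim $\gamma_{\min}\coloneqq\inf_k\gamk>0$ and argue by contradiction. The update rule $\rhok*\leq\rhok*[\hat]=\sqrt{1+\rhok}$ keeps $\seq{\rhok}$ uniformly bounded by the golden ratio, so consecutive stepsizes differ by at most a fixed multiplicative factor; hence $\liminf_k\gamk=0$ would force $\gamk$ to be uniformly small over arbitrarily long contiguous stretches of indices. On such stretches \cref{thm:Lamkto1} applies and yields $\Lamk\to 1$; combined with $\gamk\lk\to 0$, this gives $[\Lamk-(1-\gamk\lk)]_+\to 0$, so the second argument of the minimum in \eqref{eq:BY:rhok*} eventually exceeds $\rhok*[\hat]\geq 1$. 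Thus $\rhok*=\rhok*[\hat]\geq 1$, forcing $\gamk*\geq\gamk$ and contradicting $\gamk\to 0$. This is the main obstacle: \cref{thm:Lamkto1} as stated requires the full-sequence hypothesis $\gamk\to 0$, whereas only $\liminf\gamk=0$ is available, and the bounded multiplicative variation enforced by the adaptive rule is precisely what upgrades the liminf statement to uniform smallness on long stretches.

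\emph{Stage 3 (subsequential optimality).} With $\sum_k\gamk=\infty$, \cref{thm:Pkmin} produces a subsequence $x^{k_j}\to x^\star\in\K\subset C$ with $\varphi(x^{k_j})\to\varphi(x)$; lower semicontinuity yields $\varphi(x^\star)\leq\varphi(x)$, and combined with the hypothesis $\varphi(x)\leq\inf_k\varphi(\xk)\leq\varphi(x^\star)$ this forces $\varphi(x^\star)=\varphi(x)=\inf_k\varphi(\xk)$. To promote $x^\star$ to a minimizer, I extract further so $\gamma_{k_j+1}\to\gamma^\star>0$ and $x^{k_j+1}\to y^\star\in\K$, then pass to the limit in \eqref{eq:subgrad}: continuity of $\nabla\kernel$ and $\nabla f$ on $C$ together with outer semicontinuity of $\partial g$ yield $\nabla\kernel(x^\star)-\nabla\kernel(y^\star)=\gamma^\star(\nabla f(x^\star)+v^\star)$ for some $v^\star\in\partial g(y^\star)$, so $y^\star$ is the BPG step from $x^\star$ with stepsize $\gamma^\star$. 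The identification $y^\star=x^\star$ follows from $\varphi(y^\star)\geq\varphi(x^\star)$ (as $y^\star$ is a limit of iterates, all bounded below by $\varphi(x)=\varphi(x^\star)$) together with the descent-from-optimum mechanism underlying \cref{lem:prox:property:optimal}; hence $0\in\partial\varphi(x^\star)$, i.e., $x^\star\in\argmin_C\varphi$, and consequently $\varphi(\xk)\to\inf_{\overline{C}}\varphi$.

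\emph{Stage 4 (full sequence convergence).} With $x^\star\in C\cap\argmin\varphi$, reapplying \cref{thm:SD} now with $x^\star$ in place of $x$ gives convergence of $\UkBY(x^\star)$ to some limit. Along $x^{k_j}\to x^\star$, continuity of $\D(x^\star,\cdot)$ on $C$ gives $\D(x^\star,x^{k_j})\to 0$, and an application of \cref{lem:prox:property:optimal} to the shifted sequence $y^k\coloneqq x^{k-1}$ gives both $\D(x^{k_j},x^{k_j-1})\to 0$ and $\gamma_{k_j}P_{k_j-1}(x^\star)\to 0$. Hence all summands of $\UkBY(x^\star)$ vanish along the subsequence, so $\lim_k\UkBY(x^\star)=0$; since $\UkBY(x^\star)\geq\D(x^\star,\xk)$, this forces $\xk\to x^\star$ for the full sequence, completing the proof.
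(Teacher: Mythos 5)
Your architecture mirrors the paper's (merit-function decrease, compactness of the iterates, non-vanishing stepsizes, telescoping, then driving the merit function at the limit point to zero), and Stage 1 and the first half of Stage 3 are fine, but there are genuine gaps. In Stage 2 you claim the strong bound \(\inf_k\gamma_k>0\), and your "long stretches" upgrade does not deliver it: \cref{thm:Lamkto1} needs more than smallness of \(\gamma\) at a window of indices — its proof uses that once \emph{all} stepsizes are eventually below \(1/L^{\kernel}_{f,\K}\) the whole iterate sequence converges, so that the two Hessian evaluation points \(\xi_k,\eta_k\) coalesce; smallness of \(\gamma\) over a finite stretch does not by itself make consecutive iterates (hence \(\xi_k,\eta_k\)) close. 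Moreover, even granting an adapted lemma on stretches, \(\liminf_k\gamma_k=0\) is not contradicted: the update can shrink the stepsize by an \emph{unbounded} factor in a single step (the second term of the minimum in \eqref{eq:BY:rhok*} can be tiny when \(\gamma_k\) has grown large), so stepsizes may oscillate between very large and very small values without ever "staying small" on a stretch. What your argument actually excludes is \(\gamma_k\to0\), which is exactly the weaker claim the paper proves, and which already yields \(\sum_k\gamma_k=\infty\); the paper explicitly declines to prove a uniform lower bound inside this theorem (see the remark following \cref{thm:main}).

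The unproven lower bound then becomes load-bearing in Stages 3--4, where a second, independent gap appears: you extract \(\gamma_{k_j+1}\to\gamma^\star>0\) and apply \cref{lem:prox:property:optimal}, but both steps presuppose that the stepsizes along the relevant subsequence are \emph{bounded above}, which is never justified — along subsequences converging to \(x^\star\) the stepsizes may diverge, and handling precisely this case is the technical heart of the paper's proof (the record-index construction \eqref{eq:ki+}, the bound \(\rho_{k_i-1}\geq\rho_{\rm max}^{-1}\), boundedness of \(\Lambda_{k_i-2,2\hat\rho_{k_i-1}}\), and the Bregman--Young estimate of \(\gamma_kP_{k-1}(x^\star)\)). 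In addition, your identification \(y^\star=x^\star\) in Stage 3 rests on \(\varphi(y^\star)\geq\varphi(x^\star)\), argued from the wrong direction of lower semicontinuity (a limit of points with \(\varphi\geq c\) need not satisfy \(\varphi\geq c\); lsc only gives \(\varphi(y^\star)\leq\liminf_j\varphi(x^{k_j+1})\)), and on the "mechanism" of \cref{lem:prox:property:optimal}, whose hypothesis already requires the limit to be a minimizer — so this use is circular. Note that the optimality step can be had much more cheaply, as in the paper: once the telescoping gives \(\inf_k\varphi(x^k)=\varphi(x)\), the arbitrariness of \(x\) in \eqref{eq:xinC} forces \(x\in\argmin_C\varphi\), with no passage to the limit in the optimality conditions and no stepsize bounds. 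As written, the divergent-stepsize case is entirely missing, so the proof of full-sequence convergence is incomplete.
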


			The fact that this result subsumes \cref{thm:main:C} is obvious by observing that the validity of \cref{thm:xinC} implies that any \(x\) complying with \eqref{eq:xinC} must necessarily belong to \(\argmin_{\overline C}\varphi\cap C\).
			The proof of \cref{thm:xinC} will be carried out via intermediate claims.
			In what follows, we consider \(\U_k\) as in \eqref{eq:UkBY} in case of \refBY{}; under the needed symmetry assumption, the same proof applies to \refFNEa{} as well by simply replacing \(\U_k\gets\U^{\alpha}_k\).

			\begin{claim}\label{thm:xinC:boundedseq}
				There exists a compact set \(\K\subset C\) containing all the iterates \(x^k\).
			\end{claim}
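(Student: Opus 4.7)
}

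My approach is to sandwich $\D(x,\xk)$ between $0$ and the decreasing merit value $\UkBY(x)$ (resp.\ $\UkFNEa(x)$), and then to invoke the coercivity and boundary-blowup properties of the Bregman distance collected in \cref{fact:D}. Under the hypothesis \eqref{eq:xinC} we have $P_{k-1}(x) = \varphi(\xk') - \varphi(x) \geq 0$ for every $k\geq 1$, so \cref{thm:descent}\ref{thm:SD} applies and yields $\UkBY(x) \leq \U_0(x)$ throughout.

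The next step would be to verify that each of the three summands in the definition \eqref{eq:UkBY} of $\UkBY(x)$ is nonnegative, so that $\D(x,\xk)$ can indeed be extracted from below. The first, $\D(x,\xk)$, and the second, $\gamk(1+\rhok[\hat])P_{k-1}(x)$, are manifestly so. The third carries the coefficient $1 - \frac{\rhok}{2\rhok[\hat]}$: since the \refBY{} update enforces $\rhok*\leq \rhok*[\hat]$ at every iteration, shifting indices gives $\rhok \leq \rhok[\hat] = \sqrt{1+\rho_{k-1}}$ and hence this coefficient is in fact $\geq \tfrac{1}{2}$. I would therefore conclude that $\D(x,\xk) \leq \UkBY(x) \leq \U_0(x)$ uniformly in $k$. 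The \refFNEa{} case is cleaner, as all three summands in \eqref{eq:UkFNEa} are nonnegative without any further verification, yielding the analogous uniform bound $\D(x,\xk) \leq \UkFNEa(x) \leq \U_0(x)$.

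With $x\in C$, 1-coercivity of $\D(x,\cdot)$ (\cref{fact:D}\ref{fact:D:coercive}) immediately delivers boundedness of $\seq{\xk}$. To upgrade this to containment in a compact subset of $C$, I would rule out cluster points on $\overline{C}\setminus C$: if some subsequence of $\seq{\xk}$ approached a boundary point, the second item of \cref{fact:D} would force $\D(x,\xk)\to\infty$ along it, contradicting the uniform bound just established. Since every iterate already lies in $C$ and no cluster point lies on $\overline{C}\setminus C$, the closure $\K\coloneqq \overline{\set{\xk}[k\in\N]}$ is closed, bounded, and contained in $C$, hence a compact subset of $C$ containing every iterate. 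The only step I expect to require real care is the nonnegativity of the third summand of \eqref{eq:UkBY}, which is not transparent from the definition alone and relies on the structural bound $\rhok \leq \rhok[\hat]$ built into the stepsize rule.
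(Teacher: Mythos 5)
Your proposal is correct and follows essentially the same route as the paper: bound $\D(x,\xk)\leq\UkBY(x)\leq\UkBY_0(x)$ via the monotone decrease in \cref{thm:SD} and conclude with the coercivity of $\D(x,{}\cdot{})$ from \cref{fact:D:coercive}. Your additional checks (nonnegativity of the coefficient $1-\tfrac{\rhok}{2\rhok[\hat]}$ via $\rhok\leq\rhok[\hat]$, and the boundary-blowup property excluding cluster points in $\overline{C}\setminus C$) merely make explicit what the paper's terse proof leaves implicit.
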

			\begin{proof}
				In view of \eqref{eq:xinC}, \cref{thm:SD} implies that
				\[
					\D_{\kernel}(x,x^k)
				\leq
					\U_k(x)
				\leq
					\U_0(x)
				\]
				holds for any \(k\in\N\).
				The assertion then follows from \cref{fact:D:coercive}.
			\end{proof}

			\begin{claim}
				\(\gamma_k\not\to0\).
			\end{claim}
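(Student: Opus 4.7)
My plan is to argue by contradiction: assuming $\gamk\to 0$, I will show that the adaptive update rule eventually forces $\rhok*=\rhok*[\hat]$ and that this in turn drives the ratios $\rhok$ strictly above one, producing the desired contradiction.

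The first step is to establish boundedness of the auxiliary sequences. The update formulas \eqref{eq:BY:rhok*} and \eqref{eq:FNEa:rhok*} yield $\rhok*\leq\rhok*[\hat]$, whence $\rhok*^2\leq 1+\rhok$ for \refBY{} (respectively, $\rhok*^2\leq\tfrac{1+\alpha}{2}+\rhok$ for \refFNEa{}), and a simple induction ensures boundedness of $\seq{\rhok}$, $\seq{\rhok[\hat]}$, and hence of $\seq{\delk}$. By \cref{thm:xinC:boundedseq} the iterates lie in a compact $\K\subset C$, so local relative smoothness (\cref{ass:f}) gives $\lk\leq L_{f,\K}^{\kernel}$. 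Finally, since $\kernel$ is twice continuously differentiable with $\nabla^2\kernel\succ 0$ on $\K$, elementary quadratic bounds provide constants $0<m\leq M$ with $m\norm{x-y}^2\leq\D(x,y)\leq M\norm{x-y}^2$ throughout $\K$, so that $\ak\in[m/M,\,M/m]$ and $\tfrac{\ak}{1+\ak}$ is bounded below by a positive constant.

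With these ingredients, $\gamk\lk\to 0$, while \cref{thm:Lamkto1} (applicable because $\seq{\delk}$ is bounded and $\seq{\xk}\subset\K$) gives $\Lamk_{\delk*}\to 1$. Therefore $\bigl[\Lamk_{\delk*}-(1-\gamk\lk)\bigr]_+\to 0$, and combined with the lower bound on $\tfrac{\ak}{1+\ak}$ (for \refBY{}) or with the fixed positive constant $\alpha$ (for \refFNEa{}), the second element in the minimum defining $\rhok*$ diverges to $+\infty$. Consequently, for all sufficiently large $k$ the update reduces to $\rhok*=\rhok*[\hat]$.

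The final step is to analyze the resulting scalar recursion $\rhok*^2=1+\rhok$ (respectively $\rhok*^2=\tfrac{1+\alpha}{2}+\rhok$). The map $x\mapsto\sqrt{c+x}$ with $c>0$ is an increasing contraction on $\R_{++}$ admitting the unique fixed point $\rho_\infty=\tfrac{1+\sqrt{1+4c}}{2}>1$, which applies to both variants with $c=1$ and $c=\tfrac{1+\alpha}{2}$ respectively. Thus $\rhok\to\rho_\infty>1$, so there exist $\epsilon>0$ and $k_0$ with $\rhok\geq 1+\epsilon$ for all $k\geq k_0$, and consequently $\gamma_{k_0+n}\geq(1+\epsilon)^n\gamma_{k_0}$, contradicting $\gamk\to 0$. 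The main delicate point I anticipate is showing that $\tfrac{\ak}{1+\ak}$ remains uniformly bounded away from zero in the \refBY{} case; this rests on the non-degeneracy of $\nabla^2\kernel$ posited in \cref{ass:h} and the compactness of $\K\subset C$ afforded by \cref{thm:xinC:boundedseq}, which together permit uniform quadratic comparisons between $\D$ and the Euclidean norm. The remaining ingredients combine transparently.
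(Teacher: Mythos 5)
Your proposal is correct and follows essentially the same route as the paper: assume \(\gamk\to0\), use the boundedness of \(\seq{\rhok[\hat]}\) (hence of \(\seq{\delk}\)), the compactness from \cref{thm:xinC:boundedseq}, the uniform lower bound on \(\ak\), and \cref{thm:Lamkto1} to force \(\bigl[\Lamk_{\delk*}-(1-\gamk\lk)\bigr]_+\to0\), so that eventually \(\rhok*=\rhok*[\hat]\) and the stepsizes grow, a contradiction. Your added details (the bound \(\lk\leq L_{f,\K}^{\kernel}\), the quadratic comparison giving \(\inf_k\ak>0\), and the fixed-point analysis of \(\rho\mapsto\sqrt{c+\rho}\)) merely flesh out steps the paper states tersely.
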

			\begin{proof}
				To arrive to a contradiction, suppose that \(\gamma_k\to0\).
				Then, \cref{thm:Lamkto1} implies that eventually the quantity
				\(
					[\Lambda_{k,\delta}-(1-\gamma_k\ell_k)]_+
				\)
				appearing in both \eqref{eq:BY:rhok*} and \eqref{eq:FNEa:rhok*} vanishes.
				Since \((x^k)_{k\in\N}\) is bounded and bounded away from the boundary of \(C\), regardless of whether or not \(\kernel\) has a symmetry coefficient \(\alpha>0\), it holds that \(\inf_{k\in\N}\alpha_k>0\);
				furthermore, for both \refBY{} and \refFNEa{} note that \(\hat\rho_{k+1}\leq\sqrt{1+\rho_k}\leq\sqrt{1+\hat\rho_k}\), implying that \(\hat\rho_k\leq\frac{1+\sqrt{5}}{2}\) holds for any \(k\).
				As such, in both \eqref{eq:BY:rhok*} and \eqref{eq:FNEa:rhok*} eventually the second element in the minimum is infinite, implying that the update reduces to \(\gamma_{k+1}=\hat\rho_{k+1}\gamma_k\) and is thus divergent, a contradiction.
			\end{proof}

			\begin{claim}
				\(\inf_{k\in\N}\varphi(x^k)=\inf_{\overline C}\varphi\) (in particular, necessarily \(x\in\argmin_C\varphi\) for any \(x\) satisfying \eqref{eq:xinC}).
			\end{claim}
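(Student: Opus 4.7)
The plan is to combine the preceding two claims with the telescoping bound from \cref{thm:Pkmin} and a short convexity argument. Since the previous claim rules out $\gamk\to 0$, the strictly positive stepsizes satisfy $\limsup_{k\to\infty}\gamk>0$, from which the divergence $\sum_{k\in\N}\gamk=\infty$ follows at once (infinitely many terms bounded below by a fixed positive constant). The hypothesis \eqref{eq:xinC} then supplies $x\in C\subseteq\dom\kernel$ with $P_k(x)=\varphi(\xk)-\varphi(x)\geq 0$ for every $k$, and \cref{thm:Pkmin} yields $P_K^{\rm min}(x)\leq\U_0(x)/\sum_{k=1}^{K+1}\gamk\to 0$ as $K\to\infty$; together these force $\inf_{k\in\N}\varphi(\xk)=\varphi(x)$.

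The remaining and genuinely non-trivial step is to upgrade this identity to $\varphi(x)=\inf_{\overline{C}}\varphi$, which I would handle by contradiction. If some $y\in\overline{C}$ had $\varphi(y)<\varphi(x)$, then the standard convex-analysis fact that the segment from an interior point to a closure point stays interior --- applied with $x\in\interior\dom\kernel$, $y\in\overline{\dom\kernel}=\overline{C}$, and $\dom\kernel$ convex --- would place the convex combination $y_t\coloneqq(1-t)x+ty$ in $C$ for every $t\in[0,1)$; convexity of $\varphi$ would moreover yield $\varphi(y_t)<\varphi(x)=\inf_{k\in\N}\varphi(\xk)$ for any $t\in(0,1)$. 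But then $y_t\in C\subseteq\dom\kernel$ itself satisfies the hypothesis \eqref{eq:xinC}, so applying the previous step with $y_t$ in place of $x$ would produce the strict inequality $\inf_{k\in\N}\varphi(\xk)=\varphi(y_t)<\inf_{k\in\N}\varphi(\xk)$, a contradiction. The parenthetical assertion $x\in\argmin_C\varphi$ then follows immediately from $x\in C$ and $\varphi(x)=\inf_{\overline{C}}\varphi$.

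I do not anticipate any serious technical obstacle: both analytic ingredients (the telescoping bound of \cref{thm:Pkmin} and the nondegeneracy $\gamk\not\to 0$) are already in hand, and what remains is an elementary convexity manipulation enabled by openness of $C$. The very same argument applies verbatim to \refFNEa{} upon replacing $\UkBY$ by $\UkFNEa$ throughout.
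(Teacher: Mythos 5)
Your proposal is correct and follows essentially the same route as the paper: divergence of \(\sum_{k}\gamk\) from \(\gamk\not\to0\), then \cref{thm:Pkmin} to get \(\inf_{k\in\N}\varphi(\xk)=\varphi(x)\), then a self-application of this argument to any strictly better admissible point to reach a contradiction. The only difference is cosmetic: where the paper passes from \(\argmin_C\varphi\) to \(\argmin_{\overline C}\varphi\) by citing \cite[Prop.~11.1(iv)]{bauschke2017convex}, you reprove that passage by hand via the line-segment principle (\(y_t=(1-t)x+ty\in C\) for \(t\in[0,1)\)) together with convexity of \(\varphi\), which is a valid, self-contained substitute.
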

			\begin{proof}
				Having shown that \(\gamma_k\not\to0\), one has that \(\sum_{k\in\N}\gamma_k=\infty\).
				\Cref{thm:Pkmin} then implies that \(\inf_{k\in\N}\varphi(x^k)=\varphi(x)\).
				From the arbitrariness of the point \(x\) as in \eqref{eq:xinC} we conclude that necessarily \(x\in\argmin_C\varphi\subseteq\argmin_{\overline C}\varphi\), with inclusion holding by virtue of \cite[Prop. 11.1(iv)]{bauschke2017convex}.
				Hence \(\inf_{k\in\N}\varphi(x^k)=\inf_{\overline C}\varphi\).
			\end{proof}

			\begin{claim}\label{claim:unique}%
				There exists exactly one optimal limit point \(x^\star\in\argmin_C\varphi\).
			\end{claim}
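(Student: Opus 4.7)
The plan is to run a Bregman-Opial-type argument. By \cref{thm:xinC:boundedseq}, the sequence $\seq{\xk}$ lies in a compact set $\K\subset C$ and therefore admits limit points, all of which belong to $\K\subset C$. Combining the lower semicontinuity of $\varphi$ with the identity $\inf_{k\in\N}\varphi(\xk)=\inf_{\overline{C}}\varphi$ established in the preceding claim, every such limit point lies in $\argmin_C\varphi$. It thus only remains to prove uniqueness.

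I would fix any limit point $x^\star$ and a subsequence $\xk\to x^\star$ along $K\subseteq\N$. Applying \cref{thm:SD} with $x=x^\star$, admissible since $x^\star\in C\subseteq\dom\kernel$ and $\varphi(x^\star)=\inf_{\overline{C}}\varphi\leq\inf_{k\in\N}\varphi(\xk)$, the nonnegative sequence $\seq{\UkBY(x^\star)}$ is decreasing and convergent. I would then show that its limit is zero by exhibiting one subsequence along which this holds, monotonicity propagating the property to the entire sequence. To this end, I would invoke \cref{lem:prox:property:optimal} with $y^k=\xk$ along $K$ to obtain both $\xk*\to x^\star$ and $\gamk*(\varphi(\xk*)-\inf_{\overline{C}}\varphi)\to 0$. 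Decomposing $\UkBY*(x^\star)$ according to \eqref{eq:UkBY} into the three summands $\D(x^\star,\xk*)$, $\gamk*(1+\rhok*[\hat])P_k(x^\star)$, and $(1-\rhok*/(2\rhok*[\hat]))\D(\xk*,\xk)$, each vanishes along $K$: the first and third by joint continuity of $\D$ on $C\times C$ at $(x^\star,x^\star)$ (both $\xk$ and $\xk*$ converging to $x^\star\in C$ along $K$), and the second by \cref{lem:prox:property:optimal} combined with the uniform bound on $\rhok*[\hat]$ stemming from the recursion $\rhok*[\hat]=\sqrt{1+\rhok}$. Consequently $\UkBY(x^\star)\to 0$, and in particular $\D(x^\star,\xk)\to 0$ on all of $\N$.

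If $\tilde x^\star$ were a second limit point, the above reasoning would again give $\tilde x^\star\in\argmin_C\varphi\subset C$; taking limits along a subsequence $\xk\to\tilde x^\star$ and using the continuity of $\D(x^\star,\cdot)$ at $\tilde x^\star\in C$ would force $\D(x^\star,\tilde x^\star)=0$, whence $\tilde x^\star=x^\star$ by the essential strict convexity of $\kernel$. Identical reasoning applies verbatim to \refFNEa{} upon replacing $\UkBY$ with $\UkFNEa$ of \eqref{eq:UkFNEa}. The main technical obstacle I anticipate is verifying the hypothesis of \cref{lem:prox:property:optimal} that the stepsize sequence be bounded; this can be handled separately by combining the uniform bound on the ratios $\rhok$ with the merit-function estimate $\gamk(1+\rhok[\hat])(\varphi(\xk')-\inf_{\overline{C}}\varphi)\leq\UkBY_0(x^\star)$, the latter forcing $\varphi(\xk)\to\inf_{\overline{C}}\varphi$ along any hypothetical subsequence on which $\gamk$ diverges and thereby reducing that case to a direct continuity argument.
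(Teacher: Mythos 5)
Your route has a genuine gap. The uniqueness argument you propose hinges on proving \(\UkBY(x^\star)\to0\) (so that \(\D(x^\star,\xk)\to0\)), and to get a subsequence along which the merit function vanishes you invoke \cref{lem:prox:property:optimal}, whose hypothesis is that the corresponding stepsizes stay bounded. Nothing guarantees that \emph{some} subsequence converging to \(x^\star\) has bounded stepsizes: it may be that every subsequence with \(\xk\to x^\star\) has \(\gamk\to\infty\). Your proposed fallback does not close this case: from \(\gamk(1+\rhok[\hat])P_{k-1}(x^\star)\leq\UkBY_0(x^\star)\) you only learn that \(P_{k-1}(x^\star)\to0\) along such a subsequence, which says nothing about the limit of the product \(\gamk(1+\rhok[\hat])P_{k-1}(x^\star)\) itself --- a divergent factor times a vanishing one --- and no ``direct continuity argument'' determines it. Handling exactly this divergent-stepsize case is the most delicate part of the paper's proof of \cref{thm:xinC}, carried out \emph{after} the present claim (and using it): the subsequence construction \eqref{eq:ki+}, the lower bound \(\rhok_{k_i-1}\geq\rho_{\rm max}^{-1}\), the boundedness of \(\Lambda_{k_i-2,2\hat{\rho}_{k_i-1}}\), and a Bregman--Young estimate on \(\gamk P_{k-1}(x^\star)\). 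In effect your plan front-loads that hardest step into this claim without supplying it.

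The claim itself needs far less, and the paper's proof avoids the issue entirely: for two optimal limit points \(\bar x_1,\bar x_2\), the cost terms and \(\D(\xk,\xk')\) terms cancel in the difference, giving \(\UkBY(\bar x_1)-\UkBY(\bar x_2)=\D(\bar x_1,\xk)-\D(\bar x_2,\xk)\); since both merit sequences converge by \cref{thm:SD}, this difference has a limit \(U\), and evaluating it along the two subsequences yields \(-\D(\bar x_2,\bar x_1)=U=\D(\bar x_1,\bar x_2)\), hence \(\DD(\bar x_1,\bar x_2)=0\) and \(\bar x_1=\bar x_2\) by essential strict convexity --- no vanishing of the merit function and no stepsize control required. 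A minor additional slip in your write-up: lower semicontinuity together with \(\inf_{k\in\N}\varphi(\xk)=\inf_{\overline C}\varphi\) does not make \emph{every} limit point optimal (the values \(\varphi(\xk)\) need not converge); it only gives existence of \emph{an} optimal limit point, which is all the claim asserts and all your uniqueness step actually needs.
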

			\begin{proof}
				The existence of an optimal limit point is guaranteed by the previous claim, since \((x^k)_{k\in\N}\) is bounded (and bounded away from the boundary of \(C\)) and \(\varphi\) is lsc.
				Consider two optimal limit points \(\bar x_1\) and \(\bar x_2\), and two corresponding subsequences \((x^k)_{k\in K_1}\to\bar x_1\) and \((x^k)_{k\in K_2}\to\bar x_2\).
				Notice that
				\[
					\U_k(\bar x_1)-\U_k(\bar x_2)
				=
					\D_{\kernel}(\bar x_1,x^k)-\D_{\kernel}(\bar x_2,x^k)
				\]
				converges to some finite value \(V\), because both \(\U_k(\bar x_1)\) and \(\U_k(\bar x_2)\) are convergent.
				Considering the limits along \(k\in K_1\) and \(k\in K_2\) yields that
				\[
					V
				=
					\D_{\kernel}(\bar x_1,\bar x_1)-\D_{\kernel}(\bar x_2,\bar x_1)
				=
					\D_{\kernel}(\bar x_1,\bar x_2)-\D_{\kernel}(\bar x_2,\bar x_2),
				\]
				hence that \(-\D_{\kernel}(\bar x_2,\bar x_1)=\D_{\kernel}(\bar x_1,\bar x_2)\).
				Thus \(\DD_{\kernel}(\bar x_1,\bar x_2)=0\), implying that \(\bar x_1=\bar x_2\).
			\end{proof}

			In light of the previous claim, the proof of \cref{thm:xinC} is completed once we show that \(x^k\to x^\star\).
			To this end, owing to the fact that \(\D_{\kernel}(x^\star,x^k)\leq\U_k(x^\star)\) it will suffice to show that \(U\coloneqq\lim_{k\to\infty}\U_k(x^\star)\) is zero, (existence and finiteness of \(U\) is ensured by \cref{thm:SD}).
			\begin{itemize}
			\item
				Let us consider a subsequence \((x^k)_{k\in K}\) converging to \(x^\star\).
				If the corresponding subsequence \((\gamma_{k+1})_{k\in K}\) is bounded, then it follows from \cref{lem:prox:property:optimal} that  \(x^{k+1}\to x^\star\) and \(\gamma_{k+1} P_k(x^\star) \to 0\) as well, and from the expression of \(\U_k\) together with the fact that \(\hat\rho_{k+1}\) is bounded and \(\inf_k \alpha_k>0\), it is clear that
				\(\U_{k+1}(x^\star)\to0\) as \(K\ni k\to\infty\).

			\item
				In what follows, let us instead consider the complementary case in which for any subsequence \(x^k\) converging to \(x^\star\) the corresponding subsequence of stepsizes \((\gamma_{k+1})_{k\in K}\) is divergent.
				In this case, note that since \(\gamma_{k+1}P_k(x^\star)\leq\U_{k+1}(x^\star)\leq\U_0(x^\star)\), we have
				\begin{align}
				\nonumber
					(x^k)_{k\in K}\to x^\star
				\Leftrightarrow{} &
					(\gamma_{k+1})_{k\in K}\to\infty
				\Leftrightarrow
					\bigl(P_k(x^\star)\bigr)_{k\in K}\to0
				\\
				\nonumber
				\Rightarrow{} &
					(\gamma_k)_{k\in K}\to\infty
				\\
				\Leftrightarrow{} &
					(x^{k-1})_{k\in K}\to x^\star
				\Leftrightarrow
					\bigl(P_{k-1}(x^\star)\bigr)_{k\in K}\to0,
				\label{eq:gamk:infty"gamk-1}
				\end{align}
				where the right implication follows from the fact that \(\gamma_{k+1}\leq\hat\rho_{k+1}\gamma_k\leq\rho_{\rm max}\gamma_k\), for some \(\rho_{\rm max}\leq\frac{1+\sqrt{5}}{2}\).
				Since an optimal limit exists, it follows that \(\sup_{k\in\N}\gamma_k=\infty\).

				In what follows, we construct a specific subsequence \(K\coloneqq\{k_0,k_1,\dots\}\) along which \((\gamma_k)_{k\in K}\) diverges.
				In doing so, we expand upon the arguments in the proof of \cite[Thm. 2.3]{latafat2024adaptive} to account for the complications of the non-Euclidean setting investigated here.
				For \(i\geq0\) let
				\begin{equation}\label{eq:ki+}
					k_{i+1}=\min\{k\geq k_i \mid \gamma_k \geq \rho_{\rm max}\gamma_{k_i}\}.
				\end{equation}
				Then, \(\bigl(\gamma_{k_i}\bigr)_{i\in\N}\to\infty\), which by \eqref{eq:gamk:infty"gamk-1} implies that
				\begin{equation}\label{eq:xkitox*}
					\lim_{i\to\infty}x^{k_i-s}
				=
					x^\star
				\quad
					\forall s=1,2,3
				\end{equation}
				(in fact, for any \(s\in\N\)).
				In light of \cref{thm:SD} we have that
				\begin{equation}\label{eq:U:gamkPk-1}
					U= \lim_{k \to \infty} \U_k= \lim_{i \to \infty} \U_{k_i-1}(x^\star) = \lim_{i \to \infty} \gamma_{k_i-1}(1+ \vartheta_{k_i-1})P_{k_i-2}(x^\star).
				\end{equation}

				We proceed to show that \(\gamma_{k-1} P_{k-2}(x^\star)\) converges to zero along the same subsequence.
				\begin{subequations}
					For every \(i\in\N\), note that
					\begin{gather}
						\rho_{k_i}>1
						~\text{ and }~
						k_{i-1}\leq k_i-1
					\shortintertext{by minimality of \(k_i\) and the fact that \(\rho_{\rm max}>1\), hence that}
					\label{eq:k_i-1}
						\text{either }~
						\rho_{k_i-1}>1
						~\text{ or }~
						\bigl(
							k_i-1\notin K
							\text{ and thus }
							k_{i-1}\leq k_i-2
						\bigr).
					\shortintertext{These combined imply also that}
					\label{eq:k-1}
						\rho_{k_i-1}\geq\rho_{\rm max}^{-1}.
					\end{gather}
					Indeed, if not, then \(\rho_{k_i-1}<\rho_{\rm max}^{-1}<1\), implying by \eqref{eq:k_i-1} that \(k_{i-1}\leq k_i-2\);
					this would lead to the contradiction
					\[
						\rho_{\rm max}\gamma_{k_{i-1}}
					\stackrel{\text{\clap{\eqref{eq:ki+}}}}{\leq}
						\gamma_{k_i}
					\leq
						\rho_{\rm max}\gamma_{k_i-1}
					=
						\rho_{\rm max}\rho_{k_i-1}\gamma_{k_i-2}
					\stackrel{\mathclap{\bot}}{<}
						\gamma_{k_i-2}
					\leq
						\rho_{\rm max}\gamma_{k_{i-1}},
					\]
					where ``\(\bot\)'' marks where the contradictory inequality is used, and the last inequality follows both in case \(k_i-2=k_{i-1}\) (since \(\rho_{\rm max}>1\)) or when \(k_{i-1}<k_i-2<k_i\) (from minimality in the definition of \(k_i\)).
				\end{subequations}
				Therefore, \eqref{eq:k-1} holds true, implying in particular that the sequence
				\begin{equation}\label{eq:Lambda:k-2:bounded}
					\bigl(\Lambda_{k_i-2, 2\hat\rho_{k_i-1}}\bigr)_{i\in\N}
				\quad
					\text{is bounded,}
				\end{equation}
				for otherwise the second term in either \eqref{eq:BY:rhok*} or \eqref{eq:FNEa:rhok*} would vanish, owing to the fact that \(\hat\rho_k\) is lower bounded by \(\sqrt{1/2}\) and similarly \(\alpha_k\) is bounded away from zero.

				Let \(v^k\coloneqq H_k(x^{k-1})- H_k(x^k)\in\gamma_k\partial\varphi(x^k)\), and observe that
				\begin{align*}
				&
					\gamma_k P_{k-1}(x^\star)
				\\
				\leq{} &
					\rho_k\innprod{x^\star - x^{k-1}}{-v^{k-1}}
				\\
				\leq{} &
					\tfrac{\rho_k}{2\hat\rho_k}\D_{\kernel}(x^\star, x^{k-1})
					+
					\tfrac{\rho_k}{2\hat\rho_k}
					\D_{\kernel^*}\bigl(\nabla\kernel(x^{k-1})-2\hat\rho_kv^{k-1}\bigr), \nabla \kernel(x^{k-1})\bigr)
				\\
				={} &
					\tfrac{\rho_k}{2\hat\rho_k}\D_{\kernel}(x^\star, x^{k-1})
					+
					2\rho_k \hat\rho_k^2\Lambda_{k-1, 2\hat\rho_k} \DD_{\kernel}(x^{k-1},x^{k-2})
				\\
				\leq{} &
					\tfrac{1}{2}\D_{\kernel}(x^\star, x^{k-1})
					+
					2\rho_{\rm max}^3 \Lambda_{k-1, 2\hat\rho_k} \DD_{\kernel}(x^{k-1},x^{k-2})
				\end{align*}
				holds for any \(k\), where we used the Bregman--Young inequality \eqref{eq:BY} with parameter \(\delta= \hat\rho_k\) in the second inequality, and \(\rho_k\leq \hat\rho_k\leq \rho_{\rm max}\) in the last one.
				Recall that \(\bigl(x^{k_i-3}\bigr)_{i\in\N} \to x^\star\) by \eqref{eq:xkitox*}.
				Thus,
				\[
					\gamma_{k_i-1} P_{k_i-2}(x^\star)
				\leq
					\tfrac{1}{2}
					\underbrace{
						\D_{\kernel}(x^\star, x^{k_i-2})
						\vphantom{\Lambda_{k_i-2, 2\rho_{k_i-1}}}
					}_{\to 0 \text{ by \eqref{eq:xkitox*}}}
					+
					2\rho_{\rm max}^3
					\underbrace{
						\Lambda_{k_i-2, 2\rho_{k_i-1}}
					}_{\text{\rm bounded}}
					\,
					\underbrace{
						\DD_{\kernel}(x^{k_i-2},x^{k_i-3})
						\vphantom{\Lambda_{k_i-2, 2\rho_{k_i-1}}}
					}_{\to 0 \text{ by \eqref{eq:xkitox*}}}.
				\]
				Using this in \eqref{eq:U:gamkPk-1} and noting that \(\theta_{k_i-1}\) being a multiple of \(\hat{\rho}_{k_i-1}\) in both \refBY{} and \refFNEa{} is bounded, completes the proof.
			\end{itemize}

		\subsection{Proof of \texorpdfstring{\cref{eq:main:inf}}{Equation \ref*{eq:main:inf}}}
			In this subsection we prove that the iterates generated by either \refBY{} or \refFNEa{} in the generality of \cref{ass:basic} (in addition to \(\kernel\) having a symmetry coefficient \(\alpha>0\) in the latter case) are such that \(\inf_{k\in\N}\varphi(x^k)=\inf_{\overline C}\varphi\).

			To see this, contrary to the claim suppose that \(\inf_{k\in\N}\varphi(x^k)>\inf_{\overline C}\varphi\).
			Then, since \(\inf_{\overline C}\varphi=\inf_C\varphi\) by \cite[Prop. 11.1(iv)]{bauschke2017convex}, there exists \(x\in C\) such that \(\varphi(x^k)\geq\varphi(x)\) holds for all \(k\in\N\).
			Invoking \cref{thm:xinC} yields a contradiction.

		\subsection{Proof of \texorpdfstring{\cref{thm:main:zone}}{Theorem \ref*{thm:main:zone}}}
			In this subsection we prove that whenever \cref{ass:basic,ass:zone} are satisfied, the iterates generated by \refBY{} are bounded and admit exactly one optimal limit point.\footnote{%
				\label{foot:noFNEa}%
				As commented after \cref{thm:main}, there is no loss of generality in considering only \refBY{} and disregard the claim for \refFNEa.
			}
			We can without loss of generality assume that \(\argmin_{C}\varphi=\emptyset\), for otherwise a stronger result is already covered by \cref{thm:main:C}.

			Under \cref{ass:zone}, \(\dom\kernel=\overline C\), and therefore in \cref{thm:SD} one can take any \(x\in\argmin_{\overline C}\varphi\).
			In particular, \(\D_{\kernel}(x,x^k)\leq\U_k(x)\leq\U_0(x)\) holds for any \(k\).
			Since \(\D_{\kernel}(x,{}\cdot{})\) is level bounded by \cref{ass:xkbounded}, boundedness of \((x^k)_{k\in\N}\) follows.
			Moreover, we know from \eqref{eq:main:inf} that \(\varphi(x^k)\to\inf_{\overline C}\varphi\) along a subsequence, and therefore an optimal limit point \(x^\star\) exists.
			To assess its uniqueness, we can argue similarly to the proof of \cref{claim:unique}, with the minor catch that now such limit points are on the boundary of \(C\).
			Considering two optimal limit points \(\bar x_1\) and \(\bar x_2\), and two corresponding subsequences \((x^k)_{k\in K_1}\to\bar x_1\) and \((x^k)_{k\in K_2}\to\bar x_2\), we still have that
			\[
				\U_k(\bar x_1)-\U_k(\bar x_2)
			=
				\D_{\kernel}(\bar x_1,x^k)-\D_{\kernel}(\bar x_2,x^k)
			\]
			converges to some finite value \(V\).
			Again by considering the limit along the two subsequences \(k\in K_1\) and \(k\in K_2\), \cref{ass:Dhto0} yields that
			\[
				-\lim_{k\in K_2}\D_{\kernel}(\bar x_1,x^k)
			=
				\lim_{k\in K_1}\D_{\kernel}(\bar x_2,x^k),
			\]
			hence that
			\[
				\lim_{k\in K_2}\D_{\kernel}(\bar x_1,x^k)
			=
				0
			=
				\lim_{k\in K_1}\D_{\kernel}(\bar x_2,x^k),
			\]
			which by \cref{fact:D:solodov} implies that \(\bar x_1=\bar x_2\).

		\subsection{Bregman proximal gradient with backtracking}\label{sec:BPGls}%
			The proof outline of \cref{sec:outline} extends, in a simplified form, to establish convergence guarantees akin to those of \cref{thm:main} for BPG iterates with stepsizes selected according to a backtracking strategy such as the following:
			fix \(\beta,c\in(0,1)\), and at each iteration
			\par\noindent\phantomsection \label{alg:BPGls}%
			\parbox[t]{0.15\linewidth}{%
				{\bf\BPGls}

				\footnotesize
			}
			\hfill
			\fbox{\parbox[t]{0.82\linewidth}{%
				choose \(\gamma_{k+1}\geq\gamma_k\), and reduce \(\gamma_{k+1}\gets\beta\gamma_k\) until
				\begin{equation}\label{eq:BPGls}
					\D_f(x^{k+1},x^k)
				\leq
					\tfrac{c}{\gamma_{k+1}}\D_\kernel(x^{k+1},x^k)
				\end{equation}
				holds, where \(x^{k+1}\) is as in \eqref{eq:BPG}.
			}}%

			\medskip
			This linesearch variant will be used as a benchmark in the numerical experiments of the following section.
			For this reason, we provide a brief convergence analysis showing that it is well defined and applicable in all settings where the proposed \refBY{} method can be employed.
			Alternative linesearch strategies are available in the literature, such as the general framework of \cite{rebegoldi2018bregman} which enforces descent directly on the objective, or \cite{takahashi2025approximate,fujiki2026approximate} which apply to a different algorithmic update.
			We retain the present scheme due to its conceptual simplicity and its clear emphasis on the role of local relative smoothness of \(f\), independently of the nonsmooth component of the objective.
			Also in this case, the slight departure from the proof strategy of \cite{bauschke2017descent} outlined in \cref{sec:outline} allows us to work under our mild notion of local relative smoothness (see \cref{rem:local}), without requiring a priori boundedness or separation from the boundary.
			In particular, our analysis \emph{does not} necessarily rely on the existence of a relative smoothness modulus on some set containing the iterates.

			\begin{theorem}[convergence of \refBPGls]\label{thm:BPGls}%
				Let \cref{ass:basic} hold (in fact, without \(\nabla^2\kernel\) necessarily existing or being positive definite), and consider the iterates generated by \refBPGls.
				Then, one always has that \((\varphi(x^k))_{k\in\N}\) is decreasing, with
				\begin{equation}\label{eq:BPGls:lim}
					\smash{
						\lim_{k\to\infty}\varphi(x^k)=\inf_{\overline{C}}\varphi.
					}
				\end{equation}
				Moreover,
				\begin{enumerate}
				\item \label{thm:BPGls:descent}%
					For any \(x\in\dom\varphi\cap\dom\kernel\) and \(k\in\N\) it holds that
					\[
						\D_\kernel(x,x^{k+1})
					\leq
						\D_\kernel(x,x^k)
						-
						(1-c)\D_\kernel(x^{k+1},x^k)
						-
						\gamma_{k+1}\bigl[\varphi(x^{k+1})-\varphi(x)\bigr].
					\]
				\item \label{thm:BPGls:zone}%
					If \cref{ass:zone} holds, then \((x^k)_{k\in\N}\) converges to a solution.
				\item \label{thm:BPGls:C}%
					If \(C\cap\argmin_{\overline{C}}\varphi\neq \emptyset\) (equivalently, if \(C\cap\argmin\varphi\neq\emptyset\)), then \((x^k)_{k\in\N}\) converges to a solution in \(C\), and \(\inf_{k\in\N}\gamma_k>0\).
				\end{enumerate}
			\end{theorem}%

			In place of \cref{thm:eq}, the proof relies on the following simpler identity for consecutive iterates, independently of the stepsize selection criterion.
			It is essentially the identity underlying the inequality in \cite[Lem. 5]{bauschke2017descent}, prior to the introduction of global relative smoothness bounds.

			\begin{lemma}\label{thm:BPGls:eq}%
				Suppose that \cref{ass:basic} holds (in fact, without \(\nabla^2\kernel\) necessarily existing or being positive definite), and starting from \(x^0\in C\) consider Bregman proximal gradient iterations \eqref{eq:BPG} with stepsizes \(\gamma_k>0\).
				Then, for any \(x\in\dom\varphi\cap\dom\kernel\) and \(k\in\N\) it holds that
				\begin{align*}
				&
					\D_{\kernel}(x,x^{k+1})
					+
					\gamma_{k+1}P_{k+1}(x)
					+
					\D_{\kernel}(x^{k+1},x^k)
				\\
				={} &
					\D_{\kernel}(x,x^k)
					+
					\gamma_{k+1}\D_f(x^{k+1},x^k)
					-
					\gamma_{k+1}
					\Bigl\{
						\D_f(x,x^k)
						+
						\tD_g(x,x^{k+1})
					\Bigr\},
				\end{align*}
				where \(P_{k+1}(x)\) is as in \eqref{eq:Pk}.
			\end{lemma}
			\begin{proof}
				From the three-point identity of \cref{thm:3p} we have
				\begin{align*}
				&
					\tfrac{1}{\gamma_{k+1}}
					\bigl[
						\D_{\kernel}(x,x^{k+1})
						+
						\D_{\kernel}(x^{k+1},x^k)
						-
						\D_{\kernel}(x,x^k)
					\bigr]
				\\
				={} &
					\tfrac{1}{\gamma_{k+1}}
					\innprod{x-x^{k+1}}{\nabla\kernel(x^k)-\nabla\kernel(x^{k+1})}
				\\
					\text{\small from \eqref{eq:subgrad} }
				={} &
					\innprod{x-x^{k+1}}{\tilde{\nabla} g(x^{k+1})+\nabla f(x^k)}
				\\
				={} &
					g(x)-g(x^{k+1})-\tD_g(x,x^{k+1})
					+
					f(x)-f(x^k)-\D_f(x,x^k)
				\\
				&
					+
					\innprod{x^k-x^{k+1}}{\nabla f(x^k)}.
				\end{align*}
				Observing that
				\(
					\innprod{x^k-x^{k+1}}{\nabla f(x^k)}
				=
					f(x^k)-f(x^{k+1})+\D_f(x^{k+1},x^k)
				\),
				the claim readily follows.
			\end{proof}

			\begin{proof}[Proof of \cref{thm:BPGls}]
				Since any iterate \(x^k\) lies in \(C\) by \cref{ass:g}, local smoothness of \(f\) relative to \(\kernel\) as in \cref{ass:f} yields that \eqref{eq:BPGls} is satisfied in finitely many backtrackings.
				In particular, \(\gamma_{k+1}\) is well defined and strictly positive for any \(k\in\N\).

				The linesearch inequality \eqref{eq:BPGls} combined with \cref{thm:BPGls:eq} yields assertion \ref{thm:BPGls:descent}.
				Additionally, noting that \eqref{eq:BPG} can equivalently be cast as
				\[
					x^{k+1}
				=
					\argmin_{w\in\R^n}\left\{
						\varphi(w)-\D_f(w,x^k)+\tfrac{1}{\gamma_{k+1}}\D_{\kernel}(w,x^k)
					\right\},
				\]
				it also implies that
				\[
					\varphi(x^{k+1})
				\leq
					\varphi(x^k)
					-
					\tfrac{1-c}{\gamma_{k+1}}
					\D_{\kernel}(x^{k+1},x^k)
				\quad
					\forall k\in\N,
				\]
				whence the claimed monotonic decrease of the cost follows.

				As in the proof outline of \cref{sec:outline}, we now consider two separate cases.
				\begin{itemize}[label={\small\textbullet}]%
				\item
					We consider first the case in which an \(x\in C\) exists such that \(\varphi(x^k)\geq\varphi(x)\) for any \(k\in\N\); this in particular covers the assumption of assertion \ref{thm:BPGls:C}.
					Then, the inequality in assertion \ref{thm:BPGls:descent} can be telescoped, implying that both
					\(
						\gamma_{k+1}P_{k+1}(x)
					\)
					and
					\(
						\D_{\kernel}(x^{k+1},x^k)
					\)
					vanish as \(k\to\infty\), and that \((\D_{\kernel}(x,x^k))_{k\in\N}\) is decreasing, and in particular bounded.
					By \cref{fact:D:coercive,fact:D:boundary} there exists a compact set \(\K\subset C\) that contains all the iterates, hence by \cref{ass:f} a constant \(L_{f,\K}^\kernel>0\) such that \(L_{f,\K}^\kernel\kernel-f\) is convex on \(\K\).
					In particular, \(\D_f\leq L_{f,\K}^\kernel\D_\kernel\) on \(\K\), and consequently any \(\gamma_{k+1}\leq\frac{c}{L_{f,\K}^\kernel}\) satisfies the linesearch condition \eqref{eq:BPGls}.
					As a result, \(\gamma_k\geq\gamma_{\rm min}\coloneqq\min\bigl\{\gamma_0,\beta c/L_{f,\K}^\kernel\bigr\}>0\) holds for any \(k\in\N\).
					It follows that, additionally to \(\gamma_{k+1}P_{k+1}(x)\), \(P_{k+1}(x)\) too vanishes, hence that, by lower semicontinuity, all limit points \(x^\star\) satisfy \(\varphi(x^\star)\leq\varphi(x)\).
					From the arbitrariness of \(x\) we conclude that \(\varphi(x^\star)=\min_C\varphi=\min_{\overline C}\varphi\), with last identity owing to \cite[Prop. 11.1(iv)]{bauschke2017convex}, hence that all limit points are (in \(C\) and) optimal.
					Boundedness of the sequence implies that one limit point exists, be it \(x^\star\).
					Since \(\D_\kernel(x^\star,x^k)\) is decreasing, and \(x^\star\in C\), the entire sequence \((x^k)_{k\in\N}\) converges to \(x^\star\).

				\item
					Consider now the complementary case in which no \(x\in C\) exists such that \(\varphi(x^k)\geq\varphi(x)\) holds for all \(k\in\N\).
					This means that \(\inf_{k\in\N}\varphi(x^k)=\inf_C\varphi=\inf_{\overline C}\varphi\), where again we used \cite[Prop. 11.1(iv)]{bauschke2017convex} for the last identity.
					Since \(\varphi(x^k)\) is decreasing, \eqref{eq:BPGls:lim} follows.
				\end{itemize}
				To conclude, it remains to prove assertion \ref{thm:BPGls:zone} under \cref{ass:zone}.
				In this case, any solution \(x\) belongs to \(\dom\kernel\) and thus qualifies for the inequality in assertion \ref{thm:BPGls:descent}.
				\Cref{ass:xkbounded} yields boundedness of the sequence, which combined with \eqref{eq:BPGls:lim} entails the existence of an optimal limit point, be it \(x^\star\).
				Further let \(K\subseteq\N\) be a subset of indices such that \((x^k)_{k\in K}\) converges to \(x^\star\).
				Since \((\D_\kernel(x^\star,x^k))_{k\in\N}\) is decreasing and vanishes as \(K\ni k\to\infty\) by virtue of \cref{ass:Dhto0}, the entire sequence vanishes and thus \((x^k)_{k\in\N}\to x^\star\) owing to \cref{fact:D:solodov}.
			\end{proof}

\clearpage
	\section{Numerical experiments}\label{sec:simulations}%
		In this section, we evaluate the performance of the proposed algorithms on a series of standard simulation problems.
		Except for the Euclidean simulations of \cref{sec:simulations:Euclidean} that exploit available Julia code,\footnote{%
			\url{https://github.com/pylat/adaptive-proximal-algorithms}
			\cite[\S4]{latafat2024adaptive}
		}
		all experiments were conducted using MATLAB R2025b.\footnote{%
			Code and experiments are made available on github at \url{https://github.com/OuHongjia/B-adaPG-simulations}.
		}

		In each test problem, we compare only those algorithms that are compatible with the problem's structure and domain (see \cref{table:algorithms}).
		In the convergence plots we report the cost against the number of calls to the (Bregman) proximal gradient oracle; except for \ABPGg{}, in all compared algorithms this coincides with the iteration count.
		For better visualization and comparison across different methods, the cost profiles are normalized as \(\frac{\varphi(x^k) - \min_{\overline C}\varphi}{\varphi(x^0) - \min_{\overline C}\varphi}\).
		The value of \(\min_{\overline C}\varphi\) is retrieved numerically by running Bregman proximal gradient with linesearch \refBPGls{} starting at the best iterate attained by all the tested algorithms.\footnote{%
			\refBPGls{} was selected because it is the only Bregman method among those considered that guarantees a decrease in the cost function at every iteration.
		}

		We also plot the stepsizes in a window of consecutive iterations for all adaptive Bregman methods.
		In test problems where a global relative smoothness constant \(L_f^{\kernel}\) is available, the stepsize plots are normalized by \(1/L_f^{\kernel}\) and the baseline \(\gamma_kL_f^\kernel=1\) is emphasized with a light gray line.
		Unless differently specified, algorithms are terminated when either \(\D_\kernel(x^k,x^{k-1})<10^{-12}\) or \(\norm{\tilde{\nabla}\varphi(x^k)}\leq10^{-9}\), where \(\tilde{\nabla}\varphi(x^k)\in\partial\varphi(x^k)\) is as in \eqref{eq:subgradvarphi} (this latter criterion is only triggered when approaching unconstrained minimizers).

		\subsection{Compared algorithms}
			Our stepsize selection \refBY{} is compared against other Bregman methods and, when applicable, the variant \refFNEa{} and Euclidean strategies.
			A list of all the algorithms is synopsized in \cref{table:algorithms}, together with a schematic summary of the main requirements for each.
			More detailed descriptions are provided in the following subsections.

			\begin{table}[t]
				\centering
				\noindent
				\rowcolors{1}{white}{gray!8}%
				\begin{tabular}{l@{~~} l<{~~} @{}c@{} c@{} c@{} c@{}}
					&
					& \multicolumn{1}{p{1.3cm}}{\small\centering\(L_f^{\kernel}\)-smad}
					& \multicolumn{1}{p{1.3cm}}{\small\centering\(\kernel\) str cvx}
					& \multicolumn{1}{p{1.3cm}}{\small\centering\(\alpha(\kernel)>0\)}
					& \multicolumn{1}{p{1.3cm}}{\small\centering\(C=\R^n\)}
				\\
					\refBY &&&&&
				\\
					\refFNEa &&&& \xmark &
				\\
					\refBPGls &&&&&
				\\
					\BaGRA{} & \cite{tam2023bregman} & & \xmark & &
				\\
					\ABPGg{} & \cite{hanzely2021accelerated} & \xmark & & &
				\\
					\PGls & & & & & \xmark
				\\
					\adaPG{} & \cite{latafat2024adaptive} & & & & \xmark
				\\
					\adaPGq{} & \cite{latafat2024convergence} & & & & \xmark
				\end{tabular}
				\caption[List of algorithms used in the numerical experiments of this section and their standing requirements]{%
					List of algorithms used in the numerical experiments of this section and their standing requirements.
					The term \(L_f^{\kernel}\)-smad, short for \emph{\(L_f^{\kernel}\)-smooth adaptable}, is borrowed from \cite{bolte2018first} to denote global smoothness of \(f\) relative to \(\kernel\) with (known) constant \(L_f^{\kernel}\).
				}%
				\label{table:algorithms}%
			\end{table}

			\subsubsection{Proposed adaptive methods (\refBY{} and \refFNEa{})}\label{sec:simulation:BadaPG}%
				The adaptive stepsize selection \refBY{} is tested on all problems, as the generality of \cref{ass:basic} suffices for its applicability.
				The variant \refFNEa{} is only tested on those instances in which \(\kernel\) has a symmetry coefficient \(\alpha(\kernel)>0\).

				The initial stepsizes are chosen following the strategy proposed in \cite[\S2.1.1]{latafat2024adaptive} for the Euclidean setting, noting that the same advantages extend naturally to the more general Bregman framework considered here. We first generate a trial point \(\tilde{x}\) by performing a single Bregman proximal gradient step from the initial point \(x^0\), using a stepsize \(\gamma_{\mathrm{init}}\) (chosen as \(\gamma_{\rm init} = 1/L_f^{\kernel}\) whenever a global smoothness modulus \(L_f^{\kernel}\) exists).
				Then, using \(x^0\) and \(\tilde{x}\), we compute a local relative smoothness constant \(\ell_0\) via \eqref{eq:lk}, and use its reciprocal \(\gamma_0 = \frac{1}{\ell_0}\) as a refined initial stepsize.
				If \(\gamma_0\) is significantly smaller than \(\gamma_{\rm init}\) (say, \(\gamma_0<0.1\gamma_{\rm init}\)), we reset \(\gamma_0\) to \(\gamma_{\rm init}\) and repeat the initialization procedure until a reasonable stepsize \(\gamma_0\) is obtained. We then proceed to select \(\gamma_{-1}\) small enough such that \(\gamma_0\hat\rho_0 \geq 1/2\ell_0\).

				The overhead of gradient evaluations caused by this selection is fairly accounted for in the plots.
				The same initialization is also chosen for the linesearch methods described next.

			\subsubsection{Linesearch methods (\refBPGls{} and \PGls{})}
				We consider the Bregman proximal gradient method equipped with a linesearch procedure \refBPGls{} as outlined in \cref{sec:BPGls}.
				It is applicable to any problem satisfying \cref{ass:basic} (in fact, even when \(\kernel\) is not twice differentiable); see \cref{thm:BPGls}.
				\PGls{} denotes its Euclidean counterpart, which applies in the unconstrained setting \(C=\R^n\).
				We set the backtracking parameters as \(\beta=\frac{5}{6}\) and \(c=0.95\) in our experiments.

				At each iteration, both methods perform a tentative update and then evaluate whether \(\frac{c}{\gamma_{k+1}}\) is a viable local relative smoothness modulus for \(f\) between consecutive iterates:
				if the condition in \eqref{eq:BPGls} is met, the update is accepted and the next iteration proceeds; otherwise, the stepsize is reduced by a factor \(\beta\) and a new trial is initiated.
				To reduce the number of failed attempts, the trial stepsize is initialized close to the last accepted one.
				In our experiments, we warm-start the stepsize as \(\beta^{-1}=1.2\) times the previously accepted value.
				This modest increase helps avoid overly conservative behavior and significantly improves performance by enabling the stepsize to recover from previously small values.

				Note that each iteration of linesearch-based methods involves additional function evaluations to determine an acceptable stepsize.
				These overheads are \emph{not} reflected in our metrics which only count the number of gradient evaluations; remarkably, even without factoring in the additional function evaluations incurred by linesearch, our method consistently achieves superior performance.

			\subsubsection{Bregman adaptive Golden ratio algorithm (\BaGRA)}
				The adaptive method \BaGRA{} proposed in \cite[Alg. 3]{tam2023bregman} is the Bregman extension of aGraal \cite{malitsky2020golden} that applies to the more general setting of variational inequalities.
				When specialized to the problem \eqref{eq:P} investigated here, it addresses the unconstrained minimization of \(f+g\), provided that a minimizer exists in \(\dom\kernel\), that \(\nabla f\) is Lipschitz on the set \(\dom g\cap\dom\kernel\), assumed bounded \cite[Thm. 4.1]{tam2023bregman}, and that \(\kernel\) is strongly convex.
				At each iteration, the algorithm computes an adaptive stepsize based on a local Lipschitz estimate (in our notation):
				\[
					\gamma_{k+1}
				=
					\min\left\{
						\rho\gamma_k
					,\;
						\frac{\sigma_{\kernel} \nu \rho_k}{4 \gamma_k} \cdot \frac{\norm{x^k - x^{k-1}}^2}{\norm{\nabla f(x^k) - \nabla f(x^{k-1})}^2}
					,\;
						\gamma_{\rm max}
					\right\},
				\]
				where \(\rho_{k+1} = \frac{\gamma_{k+1} \nu}{\gamma_k} \), \(\rho \in [1, \tfrac1{\nu} + \tfrac1{\nu^2}]\) and \( \sigma_{\kernel} \) is the strong convexity parameter of the Bregman kernel \(\kernel\).
				We refer to \cite[Alg. 3]{tam2023bregman} for details of the iterates.
				Following the choices made in \cite{tam2023bregman}, we used \( \nu = 1.5 \), \( \rho = \tfrac1{\nu} + \tfrac1{\nu^2} \), and maximum stepsize \(\gamma_{\rm max} = 10^6\).
				As suggested in \cite{tam2023bregman}, the initial stepsize \(\gamma_0\) is determined by introducing a small random perturbation to the starting point \(x^0\) to obtain a nearby point \(\bar{x}^0\), and then computing the local Lipschitz estimate \(L_0 = \frac{\norm{\nabla f(x^0) - \nabla f(\bar x^0)}}{\norm{x^0 - \bar x^0}}\); the initial stepsize is then set as \(\gamma_0 = \tfrac{1}{L_0}\).

				In all simulations, this method performs consistently worse compared to the other adaptive Bregman algorithms.
				On the one hand, this can be attributed to its broader applicability beyond minimization problems; on the other, we believe the culprit lies in its reliance on a \emph{Lipschitz}-based stepsize update, computed as a ratio of \emph{Euclidean norms}, while the actual updates are carried out in the Bregman geometry.
				In contrast, the proposed \refBY{} and \refFNEa{} schemes leverage \emph{purely Bregman-based estimates}, providing a more faithful description of the problem landscape and iteration updates.

			\subsubsection{Accelerated BPG with gain adaptation (\ABPGg)}
				The \ABPGg{} algorithm is an adaptive variant of the \ABPG{} method, both proposed in \cite{hanzely2021accelerated}.
				These methods exploit the so-called \emph{triangle scaling property} under Bregman geometry to achieve a convergence rate of \(\mathcal{O}(k^{-\gamma})\) via an extrapolation step, where \(\gamma\in(0, 2]\) is known as the \emph{triangle scaling exponent} (TSE).
				This class of methods is applicable when the global relative smoothness constant exists and is known.

				Compared to the \ABPG{} method, \ABPGg{} achieves faster convergence by enforcing an optimal exponent \(\gamma_{\rm in}=2\), enabled by dynamically adjusting a certain ``gain'' coefficient.
				For this reason, we only compare against the latter.
				The dynamic adjustment is validated via a linesearch process.
				Differently from \refBPGls{} and \PGls{} which only involve additional cost evaluations, every failed attempt of the backtracking in \ABPGg{} incurs an extra full Bregman proximal gradient computation, which is accounted for in the cost plots.
				We refer to \cite[Alg. 3]{hanzely2021accelerated} for further details of the iteration process, where the parameters are here set as \(\gamma = 2\), \(\rho=1.1\), and 	\(G_{\rm min}=10^{-3}\).

		\subsection{Unconstrained minimization with Hessian norm growing as a polynomial}\label{sec:Hessian}%
			As a benchmark to test all algorithms in \cref{table:algorithms}, we consider the problem proposed in \cite[\S2]{lu2018relatively} of minimizing a smooth function whose Hessian grows polynomially in norm.
			The problem is formulated as
			\begin{equation}\label{eq:Ppoly}
			    \minimize_{x\in\R^n}
			    \tfrac{1}{4}\norm{Ax-b}^4_4
			    +
			    \tfrac{1}{2}\norm{Cx-d}^2_2,
			\end{equation}
			where \(A, C\in \R^{m\times n}\) are nonzero matrices, and \(b, d\in \R^m\).
			The cost function is not smooth relative to the Euclidean kernel \(\j\) (i.e., its gradient is not globally Lipschitz differentiable);
			instead, it is smooth relative to
			\begin{equation}\label{eq:kernelPoly}
			    \kernel(x) = \tfrac{1}{4}\norm{x}_2^4 + \tfrac{1}{2}\norm{x}_2^2,
			\end{equation}
			with modulus
			\[
				L_f^{\kernel} = 3\norm{A}^4 + 6\norm{A}^3\norm{b}_2 + 3\norm{A}^2\norm{b}_2^2 + \norm{C}^2,
			\]
			see \cite[p. 339]{lu2018relatively}.
			This kernel \(\kernel\) has a symmetry coefficient \(\alpha(\kernel)=2-\sqrt{3}\) \cite[Tab. 1 and Thm. 5.2]{nilsson2025symmetry}.
			On the one hand, since the minimization is carried over the whole space \(\R^n\) (as opposed to a proper convex subset \(\overline C\)), the problem can be addressed with standard (proximal) gradient iterations with suitably chosen stepsizes.
			On the other hand, the smoothness relative to \(\kernel\) and the absence thereof relative to \(\j\) indicate that employing Bregman algorithms exploiting this tailored kernel should prove beneficial.
			Our simulations confirm this intuition, demonstrating the utility of Bregman algorithms even in the unconstrained setting.

			The matrices \(A, C\) are generated with independent identically distributed entries drawn from the uniform distribution on \([0,1]\), and the corresponding response vectors \(b, d\) are constructed by adding scaled uniform noise to the exact linear outputs.
			Comparisons were conducted across problems of varying sizes using synthetic data.

			\begin{figure}[htb]
			    \centering
			    {%
		\tikzexternalenable
		\pgfkeys{/pgf/images/include external/.code={\includegraphics[width=\linewidth]{#width=\linewidth}}}%
		\tikzsetnextfilename{HessianGrow}
		\input{./TeX/Tikz/HessianGrow.tex}%
	}
			    {%
		\tikzexternalenable
		\pgfkeys{/pgf/images/include external/.code={\includegraphics[width=\linewidth]{#width=\linewidth}}}%
		\tikzsetnextfilename{HessianGrow_step}
		\input{./TeX/Tikz/HessianGrow_step.tex}%
	}%
			    \caption[Hessian growing as a polynomial in \(\ell_2\) norm]{%
					Problem \eqref{eq:Ppoly} in \S\ref{sec:Hessian}: Hessian growing as a polynomial in \(\ell_2\) norm.
					Top:
						performance comparisons among all algorithms listed in \cref{table:algorithms} in terms of cost.
					Bottom:
						stepsize variation (normalized by \(L_f^{\kernel}\)) for Bregman methods with adaptive stepsizes in a window of the first 200 iterations.
				}%
			    \label{fig:Hessian}
			\end{figure}

			As evident from \cref{fig:Hessian} (top row), \refBY{} and \refFNEa{} emerge as clear winners in being able to adjust the stepsizes more effectively than with the trial-and-error process of the linesearch.
			The slow convergence of the accelerated algorithm \ABPGg{} owes to the high inner iteration cost for adjusting the parameters, which involves calls to the Bregman proximal gradient oracle (the method is the fastest when measured purely in terms of iteration count).

			The bottom row in \cref{fig:Hessian} illustrates the stepsize behavior of the adaptive Bregman methods.
			The stepsizes produced by \refBY{}, \refFNEa{}, and \refBPGls{} oscillate around comparable averages, namely 6 to 7 orders of magnitude larger than the conservative baseline \(1/L_f^{\kernel}\).
			Among these, \refBY{} and \refFNEa{} exhibit notably higher variability, echoing similar empirical observations for their Euclidean counterparts \cite{latafat2024adaptive,latafat2024convergence,oikonomidis2024adaptive}.
			Such oscillatory stepsize patterns, once regarded as a side effect, have recently received theoretical justification for their efficiency in the unconstrained Euclidean case \cite{grimmer2025composing}.

 		\subsection{Relative-entropy nonnegative regression}\label{sec:PPoisson}%
 			We next test the efficacy of the algorithms when a kernel function \(\kernel\) without full domain is employed.
 			The corresponding problems will thus be constrained on (the closure of) \(\dom\kernel\).
 			This simulation is adapted from \cite[\S5.3]{bauschke2017descent} and concerns a nonnegative Poisson linear inverse problem.
 			The problem is formulated as
 			\begin{equation}\label{eq:PPoisson}
 				\minimize_{x\in \R^n_+}
 				\KL(Ax\mid b)
 				+
 				\lambda\norm{x}_1,
 			\end{equation}
 			where $A\in \R^{m\times n}_+$ and $b\in \R^m_{++}$.
 			The KL-divergence is defined as
 			\[
 				\KL(x\mid y)
 				\coloneqq
 				\sum_{i=1}^n\left(x_i\ln\tfrac{x_i}{y_i} - x_i + y_i\right),
 			\]
 			and is precisely the Bregman distance \(\D_{\kernel}(x,y)\) with \(\kernel\) being the \emph{Boltzmann--Shannon entropy}
 			\begin{equation}\label{eq:BSe}
 				\kernel(x)
 			=
 				\sum_{i=1}^{n}
 				\bigl(x_i\ln x_i-x_i\bigr).
 			\end{equation}
 			As such, \(f(x)\coloneqq \KL(Ax\mid b)=\D_{\kernel}(Ax,b)\) is \(L_f^{\kernel}\)-smooth relative to \(\kernel\) with
 			\[
 				L_f^{\kernel}
 			=
 				\max_{1\leq j\leq n}\norm{A_{:j}}_1,
 			\]
 			where \(A_{:j}\) denotes the \(j\)-th column of \(A\).
 			Note that the Boltzmann--Shannon entropy complies with \cref{ass:zone} \cite[Rem. 4]{bauschke2017descent}, and admits a uniform TSE of \(1\) \cite[\S2]{hanzely2021accelerated}.

 			\begin{figure}[t]
 				\centering
 				{%
		\tikzexternalenable
		\pgfkeys{/pgf/images/include external/.code={\includegraphics[width=\linewidth]{#width=\linewidth}}}%
		\tikzsetnextfilename{KL}
		\input{./TeX/Tikz/KL.tex}%
	}
 				{%
		\tikzexternalenable
		\pgfkeys{/pgf/images/include external/.code={\includegraphics[width=\linewidth]{#width=\linewidth}}}%
		\tikzsetnextfilename{KL_step}
		\input{./TeX/Tikz/KL_step.tex}%
	}%
 				\caption[KL-divergence nonnegative regression]{%
					Problem \eqref{eq:PPoisson} in \S\ref{sec:PPoisson}:
 					KL-divergence nonnegative regression.
 					Top:
 						convergence in terms of cost.
 					Bottom:
 						stepsize magnitudes in a window of the first 200 iterations (normalized by \(L_f^{\kernel}\)).
 				}%
 				\label{fig:KL}%
 			\end{figure}

 			For each experiment, the matrix \(A\) is sampled with i.i.d. entries from \([0,1]\) and normalized to sum to one.
 			The response vector \(b\) is formed by perturbing the exact output with scaled uniform noise and is strictly positive.
 			We set \(\lambda = 0.001\) and generate synthetic data with varying dimensions to compare the performance of different algorithms.
 			As evident from \cref{fig:KL} (top row), the accelerated algorithms \ABPG{} and \ABPGg{} perform remarkably well in this problem setup, while our adaptive stepsize selection strategies perform slightly better than the linesearch method.
 			The bottom row offers a snapshot of the stepsize magnitude of adaptive methods on the first 200 iterations, showcasing a fluctuating trend consistent with the observation in \cref{fig:Hessian}.
 			This time, the stepsizes oscillate around a value only slightly higher than the baseline \(\frac{1}{L_f^{\kernel}}\).

		\subsection{Relative-entropy barrier minimization on the simplex}\label{sec:KLS}%
			This problem involves the minimization of a generalized volumetric barrier function over the probability simplex:
			\begin{equation}\label{eq:KLS}
				\minimize_{x\in\Delta_n}f(x)\coloneqq\ln\det\bigl(H X^{-1} H^T \bigr),
			\end{equation}
			where \(H\in\R^{m\times n}\) with \(n\geq m+1\), \(X = \diag(x)\), and \(\Delta_n\) is the probability simplex
			\[
			\textstyle
				\Delta_n
			\coloneqq
				\{x\in\R_+^n \mid \sum_{i=1}^nx_i=1\}.
			\]
			\begin{figure}[t]
				\centering
				{%
		\tikzexternalenable
		\pgfkeys{/pgf/images/include external/.code={\includegraphics[width=\linewidth]{#width=\linewidth}}}%
		\tikzsetnextfilename{KLSymplex}
		\input{./TeX/Tikz/KLSymplex.tex}%
	}
				{%
		\tikzexternalenable
		\pgfkeys{/pgf/images/include external/.code={\includegraphics[width=\linewidth]{#width=\linewidth}}}%
		\tikzsetnextfilename{KLSymplex_step}
		\input{./TeX/Tikz/KLSymplex_step.tex}%
	}%
				\caption[Relative entropy barrier minimization on the simplex using LIBSVM datasets]{%
					Problem \eqref{eq:KLS} in \S\ref{sec:KLS}:
					Relative entropy barrier minimization on the simplex using LIBSVM datasets.
					Top: convergence in terms of cost.
					Bottom: stepsize magnitudes in a window of the first 200 iterations.
				}%
				\label{fig:KLSDataset}
			\end{figure}%
			As observed in \cite{lu2018relatively}, \(f\) is smooth relative to the \emph{Burg entropy} \(x\mapsto-\sum_{i=1}^n\ln x_i\).
			However, the corresponding Bregman projection onto \(\Delta_n\) is not available in closed form.
			For this reason, we instead adopt the Boltzmann--Shannon entropy \eqref{eq:BSe} as in the previous experiments, whose associated Bregman projection onto \(\Delta_n\) does admit a simple closed-form expression:
			for any \(v\in\R^n\) and \(y\in\R_{++}^n\), the minimizer of
			\[
				\argmin_{x\in \Delta_n}\{\innprod{v}{x} + \D_{\kernel}(x,y)\}
			\]
			is given by
			\begin{equation}
				x_i = \frac{y_i e^{-v_i}}{\sum_j y_j e^{-v_j}}, \quad i=1,\dots,n.
			\end{equation}
			Formally, the problem is cast in form \eqref{eq:P} as
			\[
				\minimize_{x\in\R_n^+}\ln\det\bigl(H X^{-1} H^T \bigr)+\indicator_{\Delta_n}(x),
			\]
			where \(\indicator_{\Delta_n}:\R^n\to\Rinf\) is the \emph{indicator function} of the set \(\Delta_n\), namely such that \(\indicator_{\Delta_n}(x)=0\) for \(x\in\Delta_n\) and \(\indicator_{\Delta_n}(x)=\infty\) otherwise.

			Only algorithms able to cope with lack of relative smoothness of \(f\) and strong convexity and real valuedness of \(\kernel\) from \cref{table:algorithms} are employed.
			We conducted experiments on regression datasets from the LIBSVM repository \cite{chang2011libsvm}, aiming to evaluate its ability to identify the most relevant data points for predicting associated labels;
			specifically, the bodyfat dataset (\(n=252\), \(m=14\)), the mpg dataset (\(n=392\), \(m=7\)), and the housing dataset (\(n=506\), \(m=13\)).
			Also in this case, the proposed adaptive method \refBY{} exhibits superior performance over the linesearch variant.

			We also remark that all solutions in the considered problems exhibit high sparsity (bodyfat: {83.73}\%, mpg: {96.43}\%, housing: {92.29}\%), and thus lie on the boundary of \(\dom\kernel\).
			The bottom plots in \cref{fig:KLSDataset} nonetheless demonstrate that the proposed \refBY{} generates stepsizes that stay bounded away from zero, although not theoretically guaranteed by \cref{thm:main:zone}.
			Whether a rigorous confirmation of this trend can be established is currently under investigation.

		\subsection{Entropic mirror descent for underdetermined linear systems}\label{sec:simulations:linsys}%
			We consider the underdetermined nonnegative linear system studied in \cite{malitsky2026entropic}, cast as the minimization problem
			\begin{equation}\label{eq:Plinsys}
				\minimize_{x\in\R^n_+}
				f(x)
				\coloneqq
				\tfrac{1}{2}\norm{Ax - b}^2,
			\end{equation}
			where \(A\in\R^{m\times n}\) with \(m<n\), and the feasibility set \(\{x\in\R^n_+\mid Ax=b\}\) is assumed nonempty.
			As in \cref{sec:PPoisson}, we employ the Boltzmann--Shannon entropy \eqref{eq:BSe} as the kernel \(\kernel\), with \(g=0\) (so the nonnegativity constraint is encoded entirely through \(\dom\kernel=\R^n_+\)).
			Beyond its practical role as a domain-encoding kernel, the Boltzmann--Shannon entropy induces an \emph{implicit bias} towards \(\ell_1\)-sparse solutions \cite{malitsky2026entropic}.
			Note that the methods \ABPGg{} and \BaGRA{} are inapplicable here, since \(f\) is not globally relative smooth and \(\kernel\) is not strongly convex.

			In this section, we compare against the entropic mirror descent method with Polyak's stepsize proposed in \cite[Eq.~(3.5)]{malitsky2026entropic}, hereafter denoted \MDPolyak{}.
			This method is tailored specifically for this problem, and reads
			\begin{equation}\label{eq:MDPolyak}
				x^{k+1}
			=
				x^k\circ\exp\bigl(-\alpha_k\nabla f(x^k)\bigr),
				\quad
				\alpha_k
			=
				\min\!\left\{
					\tfrac{f(x^k)}{\norm{\nabla f(x^k)}^2_{x^k}}
				,\,
					\tfrac{1.79}{\norm{\nabla f(x^k)}_\infty}
				\right\},
			\end{equation}
			where \(\norm{v}^2_x\coloneqq\innprod{x}{v\circ v}\) is a weighted squared norm and \(\circ\) denotes elementwise (Hadamard) multiplication.
			Notice that the BPG iterate \eqref{eq:BPG} with kernel \eqref{eq:BSe} and \(g=0\) produces exactly the same update; the two methods therefore differ solely in their stepsize rules:
			while \refBY{} derives \(\gamma_{k+1}\) from consecutive gradient evaluations and requires no additional information, \MDPolyak{} leverages knowledge of the optimal value \(f^*=0\) via the Polyak formula.

			Following the setup of \cite{malitsky2026entropic}, we generate a matrix \(A\in\R^{300\times 500}\) with i.i.d.\ entries drawn from the half-normal distribution, and a sparse ground truth \(z\in[0,1]^{500}\) with \(\norm{z}_0\in\{0,30,60\}\) nonzero entries, setting \(b=Az\).
			All algorithms are initialized at \(x^0=10^{-4}\mathbf{1}\), where \(\mathbf{1}\) is the vector of all ones.

			This toy simulation is intended to assess the performance of the proposed \refBY{} against a Bregman-based solver tailored for a specific problem.
			Not surprisingly, as evident from \cref{fig:LinSys} (top) \MDPolyak{} consistently outperforms our \refBY{}.
			Nevertheless, while operating under considerably weaker assumptions and applying to a substantially broader setting, \refBY{} showcases a robust performance and employs stepsizes of comparable magnitutes even in this specialized setting.

			\begin{figure}[htb]
				\centering
				{%
		\tikzexternalenable
		\pgfkeys{/pgf/images/include external/.code={\includegraphics[width=\linewidth]{#width=\linewidth}}}%
		\tikzsetnextfilename{Entropy}
		\input{./TeX/Tikz/Entropy.tex}%
	}
				{%
		\tikzexternalenable
		\pgfkeys{/pgf/images/include external/.code={\includegraphics[width=\linewidth]{#width=\linewidth}}}%
		\tikzsetnextfilename{Entropy_step}
		\input{./TeX/Tikz/Entropy_step.tex}%
	}%
				\caption[Entropic mirror descent for underdetermined linear systems]{%
					Problem \eqref{eq:Plinsys} in \S\ref{sec:simulations:linsys}: underdetermined nonnegative linear system with different sparsity levels.
					Top: convergence in terms of cost.
					Bottom: stepsize magnitudes in a window of the first 200 iterations.
				}%
				\label{fig:LinSys}
			\end{figure}

		\subsection{Problems without the Bregman zone assumption}\label{sec:simulations:zone}%
			To confirm the wide applicability of \refBY{} in the general setting of \cref{thm:main}, we consider a representative problem in which \cref{ass:zone} is not met and solutions exist only on the boundary of \(\dom\kernel\).
			Specifically, we consider
			\begin{equation}\label{eq:linsysball}
				\minimize_{x\in\overline{C}}
				\tfrac{1}{2}\norm{Ax-b}^2
			\end{equation}
			where \(\overline C=\{x\in\R^n:\norm{x}\leq1\}\) is the unit ball.
			We set \(n=1000\) and randomly generate \(A\in\R^{n\times n}\) together with a candidate solution \(\bar x\in\R^n\) of norm 2, and then set \(b=A\bar x\) so that the solution of \eqref{eq:linsysball} lies on the boundary.
			The ball constraint is enforced by the employment of three different dgfs, namely
			\begin{subequations}\label{eq:zone:dgfs}%
				\begin{align}
				\label{eq:phi:ball}
					\kernel(x)
				& =
					-\sqrt{1-\norm{x}^2},
				\\
				\label{eq:phi:ballfrac}
					\kernel(x)
				& =
					-\tfrac{1}{1-\norm{x}^2},
				\shortintertext{and}
				\label{eq:phi:balllog}
					\kernel(x)
				& =
					-\ln(1-\norm{x}^2)
				\end{align}
			\end{subequations}
			(all extended to \(\infty\) where the expressions are not well defined).
			Each of these functions is not separable and violates \cref{ass:zone}; for \eqref{eq:phi:ballfrac} and \eqref{eq:phi:balllog} this is clear since the domains are not closed, while the case of \eqref{eq:phi:ball} has been shown in \cref{ex:zone}.
			This test is provided as a simple proof of concept of the robustness of the proposed algorithmic framework when less ``benign'' dgfs are employed; for this reason, comparisons with the better suited Euclidean methods are omitted in this case.

			\begin{remark}\label{rem:zonedgfs}%
				All these functions are of the form \(\kernel(x)=\varrho(\norm{x}^2)\) for some convex \(\varrho:\R_+\to\Rinf\) which is twice differentiable and with strictly positive derivative on the interior of its domain.
				It can be easily checked that
				\[
					\nabla\kernel(x)
				=
					2\varrho'(\norm{x}^2)x
				\quad\text{and}\quad
					\nabla^2\kernel(x)
				=
					2\varrho'(\norm{x}^2)\I
					+
					4\varrho''(\norm{x}^2)xx^\top,
				\]
				where \(\I\) is the \(n\times n\) identity matrix, and in particular that each \(\kernel\) is \(\mu\)-strongly convex with
				\[
					\mu=2\inf \varrho'=2\varrho'(0),
				\]
				where the last identity owes to monotony of \(\varrho'\) (due to convexity of \(\varrho\)).
				Thus, evaluating \(\nabla\kernel^*\) reduces to solving a one-dimensional equation, namely
				\[
					\nabla\kernel^*(\xi)
				=
					\tfrac{1}{2\varrho'(t^2)}\xi
				\quad\text{where}\quad
					2t\varrho'(t^2)=\norm{\xi}.
				\]
				This computation is available in closed form for each of the functions \(\kernel\) here considered.
				\hfill\qedsymbol
			\end{remark}

			We then deduce that \(f(x)=\frac{1}{2}\norm{Ax-b}^2\) is globally smooth relative to each \(\kernel\) as in \eqref{eq:zone:dgfs} with modulus
			\(
				L_f^\kernel
			=
				\tfrac{\norm{A}^2}{2\varrho'(0)}
			\),
			where \(\varrho\) is the corresponding radial function as in \cref{rem:zonedgfs}.
			On the other hand, the \emph{local} modulus around any point \(x\in C\) is of the order \(\tfrac{\norm{A}^2}{2\varrho'(\norm{x}^2)}\), which vanishes as \(x\) approaches the boundary.
			This explains the rapid increase of the stepsizes across all instances in \cref{fig:zone}, confirming the ability of adaptive methods to exploit local landscape information without being penalized by global worst-case estimates.

			\begin{figure}[htb]
				\centering
				{%
		\tikzexternalenable
		\pgfkeys{/pgf/images/include external/.code={\includegraphics[width=\linewidth]{#width=\linewidth}}}%
		\tikzsetnextfilename{Zone}
		\input{./TeX/Tikz/Zone.tex}%
	}
				{%
		\tikzexternalenable
		\pgfkeys{/pgf/images/include external/.code={\includegraphics[width=\linewidth]{#width=\linewidth}}}%
		\tikzsetnextfilename{Zone_step}
		\input{./TeX/Tikz/Zone_step.tex}%
	}%
				\caption[Problems without the Bregman zone assumption]{%
					Problem \eqref{eq:linsysball} of \S\ref{sec:simulations:zone} using the three dgfs \(\kernel\) as in \eqref{eq:zone:dgfs}, all violating the Bregman zone \cref{ass:zone}.
					Top: convergence in terms of cost.
					Bottom: stepsize magnitudes in a window of the first 200 iterations (normalized by \(L_f^{\kernel}\)).
				}%
				\label{fig:zone}%
			\end{figure}

		\subsection{Euclidean problems}\label{sec:simulations:Euclidean}%
			We finally assess the proposed adaptive selection strategies in purely Euclidean settings, namely, in which the smooth function has a globally Lipschitz continuous gradient.
			We study two setups, each developed in a dedicated subsection.

			\subsubsection{Bregman vs Euclidean updates}\label{sec:BregmanVSEuclidean}%
				First, we consider a standard lasso problem
				\begin{equation}\label{eq:Euclidean}
					\minimize_{x\in\R^n}
					\tfrac{1}{2}\norm{Ax - b}^2
					+
					\lambda\norm{x}_1,
				\end{equation}
				where \(A\in\R^{m\times n}\), \(b\in\R^m\), and \(\lambda=0.01\) is the regularization parameter promoting sparsity.
				Clearly, \(f(x)=\tfrac{1}{2}\norm{Ax - b}^2\) has \(L_f\)-Lipschitz gradient with \(L_f=\norm{A}^2\).
				For Bregman methods, we select the ``aggressive'' kernel \(\kernel\) \eqref{eq:kernelPoly}, namely the Euclidean \(\j\) augmented by a quadratic function as in \cref{sec:Hessian}:
				\[
					\kernel(x) = \tfrac{1}{4}\norm{x}_2^4 + \tfrac{1}{2}\norm{x}_2^2.
				\]
				By doing so, \(f\) is smooth relative to \(\kernel\) as well, and with same constant \(L_f^{\kernel}=L_f\); however, the difference \(L_f\kernel-f\) is strictly convex, whereas \(L_f\j-f\) is not.
				This test is meant to compare the performance of Bregman vs Euclidean updates.
				Differently from other simulations, due to numerical issues close to solutions the tolerance of \(\D_\kernel(x^k,x^{k-1})\) in the termination criterion is set to \(10^{-9}\).

				\begin{figure}[h]
					\centering
					{%
		\tikzexternalenable
		\pgfkeys{/pgf/images/include external/.code={\includegraphics[width=\linewidth]{#width=\linewidth}}}%
		\tikzsetnextfilename{BregmanLasso}
		\input{./TeX/Tikz/BregmanLasso.tex}%
	}
					{%
		\tikzexternalenable
		\pgfkeys{/pgf/images/include external/.code={\includegraphics[width=\linewidth]{#width=\linewidth}}}%
		\tikzsetnextfilename{BregmanLasso_step}
		\input{./TeX/Tikz/BregmanLasso_step.tex}%
	}%
					\caption[Bregman vs Euclidean methods]{%
						Bregman vs Euclidean methods as in \S\ref{sec:BregmanVSEuclidean}:
						Random lasso using quartic kernel \(\kernel\) \eqref{eq:kernelPoly} for Bregman methods.
						Top: convergence in terms of cost.
						Bottom: stepsizes normalized by \(L_f^{\kernel}\) in a window of the first 200 iterations.%
					}%
					\label{fig:LassoDh}
				\end{figure}

				As shown in the results of \cref{fig:LassoDh}, Bregman updates seem to outperform purely Euclidean proximal gradient steps.
				While surprising, this phenomenon can be attributed to the ``higher curvature'' of \(L_f\kernel-f\) with respect to that of \(L_f\j-f\), a behavior that we find worthy of future investigations.

			\subsubsection{Conservatism when \texorpdfstring{\(\kernel=\j\)}{phi=j}}\label{sec:conservatism}%
				As discussed in \cref{sec:results}, the Bregman analysis investigated here introduces some conservatism; that is, when specialized to the Euclidean kernel \(\kernel=\j\), \refBY{} and \refFNEa{} reduce to \emph{dampened} variants of \cite[\adaPG]{latafat2024adaptive} and \cite[\adaPG$^{1,\frac{1}{2}}$]{latafat2024convergence}.
				This second test investigates how the choice \(\kernel=\j\) penalizes the performance with respect to \adaPG{} and \adaPGq.

				Our experiments are based on the Julia code provided in \cite{latafat2024adaptive}, and the test problems are sourced from the LIBSVM dataset \cite{chang2011libsvm}.
				We added our methods into the original test framework and conducted numerical experiments on the same problems.
				As \cref{fig:Euclidean} demonstrates, although the discussed differences do have some impact on the algorithms' performance, the outcomes remain acceptable.
				In fact, to some extent there doesn't appear to be a clear winner.
				This counterintuitive observation can be attributed to the fact that a small stepsize at an iteration \(k\) may trigger a larger stepsize at the next one.
				We believe that also this aspect is an interesting direction for future research.

				\begin{figure}[p]
					\centering
					\fbox{%
						\parbox{0.88\linewidth}{%
							{%
		\tikzexternalenable
		\pgfkeys{/pgf/images/include external/.code={\includegraphics[width=\linewidth]{#width=\linewidth}}}%
		\tikzsetnextfilename{Euc_sparse_logreg}
		\input{./TeX/Tikz/Euc_sparse_logreg.tex}%
	}
							{%
		\tikzexternalenable
		\pgfkeys{/pgf/images/include external/.code={\includegraphics[width=\linewidth]{#width=\linewidth}}}%
		\tikzsetnextfilename{Euc_sparse_logreg_step}
		\input{./TeX/Tikz/Euc_sparse_logreg_step.tex}%
	}%
						}%
					}%

					\smallskip

					\fbox{%
						\parbox{0.88\linewidth}{%
							{%
		\tikzexternalenable
		\pgfkeys{/pgf/images/include external/.code={\includegraphics[width=\linewidth]{#width=\linewidth}}}%
		\tikzsetnextfilename{Euc_cubic}
		\input{./TeX/Tikz/Euc_cubic.tex}%
	}
							{%
		\tikzexternalenable
		\pgfkeys{/pgf/images/include external/.code={\includegraphics[width=\linewidth]{#width=\linewidth}}}%
		\tikzsetnextfilename{Euc_cubic_step}
		\input{./TeX/Tikz/Euc_cubic_step.tex}%
	}%
						}%
					}%

					\smallskip

					\fbox{%
						\parbox{0.88\linewidth}{%
							{%
		\tikzexternalenable
		\pgfkeys{/pgf/images/include external/.code={\includegraphics[width=\linewidth]{#width=\linewidth}}}%
		\tikzsetnextfilename{Euc_lasso}
		\input{./TeX/Tikz/Euc_lasso.tex}%
	}
							{%
		\tikzexternalenable
		\pgfkeys{/pgf/images/include external/.code={\includegraphics[width=\linewidth]{#width=\linewidth}}}%
		\tikzsetnextfilename{Euc_lasso_step}
		\input{./TeX/Tikz/Euc_lasso_step.tex}%
	}%
						}%
					}%

					\caption[Conservatism when \(\kernel=\j\)]{%
						Conservatism when \(\kernel=\j\) as in \S\ref{sec:conservatism}:
						Comparisons with Euclidean adaptive methods on sparse logistic regression (top rows), cubic regularization (middle rows), and lasso problems (bottom rows).
					}
					\label{fig:Euclidean}%
				\end{figure}

	\section{Conclusions}
		This paper introduced new adaptive stepsize strategies for Bregman proximal gradient algorithms that eliminate the need for traditional backtracking procedures.
		The proposed methods determine stepsizes dynamically based solely on certain local curvature estimates derived from gradients at the current and previous iterations.
		This approach enables large stepsizes, often several orders of magnitude larger than their constant stepsize counterparts, leading to fast convergence while maintaining theoretical convergence guarantees.
		Notably, the theoretical analysis operates under minimal assumptions, requiring only local relative smoothness for the differentiable term, and local strong convexity for the Bregman kernel, rather than their global counterparts, and is thus also agnostic to any such global moduli.
		A key technical step in our analysis is the employment of a Bregman generalization of Young's inequality, which, despite its simplicity, proves essential to the analysis, and is interesting in its own right.

		When specialized to \(\kernel=\j\), the proposed algorithms recover the Euclidean counterparts in \cite{latafat2024adaptive,latafat2024convergence} up to some slight conservatism.
		Regardless, as shown in the simulations, the flexibility to accommodate arbitrary 1-coercive and Legendre kernels \(\kernel\) has remarkable practical advantages even when \(\dom\kernel=\R^n\).

		Some important theoretical questions remain open.
		Our numerical evidence highlights that the proposed adaptive methods generate stepsizes that stay bounded away from zero, even when approaching boundary points of the domain of the kernel function, and fluctuate around values attained by aggressive linesearch routines.
		This trend is consistent with observations documented in previous studies in the Euclidean setting, but theoretical confirmations in the more general Bregman setup currently do not have a definite answer.
		We believe that further advances in this direction will require new insights into the Bregman curvature estimate \(\Lambda_{k,\delta}\) of \eqref{eq:Lamk}, possibly enabling the derivation of uniform lower bounds on \(\gamma_k\) within \cref{thm:Lamkto1}.

	\appendix
	\section{Omitted proofs}
		\phantomsection
		\addcontentsline{toc}{subsection}{Proof of Remark \ref*{rem:eps}}%
		\label{proof:rem:eps}%
		\begin{proof}[\textbf{Proof of \cref{rem:eps}}]%
			As commented in \cref{foot:noFNEa}, it suffices to consider the iterates generated by \refBY.
			Observe that the validity of \cref{thm:main} is unaffected by the \(\epsilon\)-perturbation.
			Indeed, as long as \(\epsilon\) is chosen small (specifically, strictly smaller than \(\frac{2-\sqrt{2}}{2}\)), the sequence \(\rho_{k+1}=(1-\epsilon)\sqrt{1+\rho_k}\) converges to a value strictly larger than one, and all the proofs verbatim apply (most crucially, the second claim in the proof of \cref{thm:xinC}).
			Let \((x^k)_{k\in K}\) be a subsequence converging to a solution \(x^\star\), ensured to exist by \cref{thm:main:zone}.
			From the update modification, and since \(P_k(x^\star)\geq0\), it follows from \eqref{eq:UkBY:leq} and \eqref{eq:UkBYdt:leq} that
			\[
				\U_{k+1}(x^\star)
			\leq
				\U_k(x)
				-
				\epsilon\gamma_k
				P_{k-1}
				-
				\tfrac{\epsilon}{2}\D_\kernel(x^k,x^{k-1})
			\]
			holds for any \(k\), where \(P_{k-1}\coloneqq P_{k-1}(x^\star)=\varphi(x^{k-1})-\min_{\overline C}\varphi\geq0\).
			In particular, \(\gamma_kP_{k-1}\) and \(\D_\kernel(x^k,x^{k-1})\) both vanish.
			Therefore, so does \(\U_{k+1}(x^\star)\) as \(K\ni k\to\infty\), and since the whole sequence is convergent its limit must be zero.
			From the inequality \(\D_\kernel(x^\star,x^k)\leq\U_k(x^\star)\) we conclude that \(\D_\kernel(x^\star,x^k)\) too vanishes, hence that \(x^k\to x^\star\) by \cref{ass:zone}.
		\end{proof}

		\phantomsection
		\addcontentsline{toc}{subsection}{Proof of Lemma \ref*{thm:eq}}%
		\label{proof:thm:eq}%
		\begin{proof}[\textbf{Proof of \cref{thm:eq}}]%
			Based on the subgradient characterization \eqref{eq:subgradvarphi} and the definition of \(\ell_k\) in \eqref{eq:lk}, we have
			\begin{align}
			\nonumber
				0
			={} &
				\varphi(x^{k-1})
				-
				\varphi(x^k)
				-
				\tfrac{1}{\gamma_k}
				\innprod{H_k(x^{k-1})-H_k(x^k)}{x^{k-1}-x^k}
				-
				\tD_\varphi(x^{k-1},x^k)
			\\
			={} &
				P_{k-1}(x)
				-
				P_k(x)
				-
				\tfrac{1-\gamma_k\ell_k}{\gamma_k}
				\DD_{\kernel}(x^k,x^{k-1})
				-
				\tD_\varphi(x^{k-1},x^k).
			\label{eq:thetk}
			\end{align}
			Furthermore, by leveraging the subgradient characterization \eqref{eq:subgrad} and applying the three point identity of \cref{thm:3p},
			\begin{align}
			\nonumber
				0
			={} &
				g(x)-g(x^{k+1})-\innprod{\tilde{\nabla}g(x^{k+1})}{x-x^{k+1}}-\tD_g(x,x^{k+1})
			\\
			\nonumber
			={} &
				g(x)-g(x^{k+1})
				+
				\innprod{\nabla f(x^k)}{x- x^{k+1}}
			\\
			\nonumber
			&
				-
				\tfrac{1}{\gamma_{k+1}}\innprod{\nabla\kernel(x^k)-\nabla\kernel(x^{k+1})}{x-x^{k+1}}
				-
				\tD_g(x,x^{k+1})
			\\
			\nonumber
			={} &
				g(x)-g(x^{k+1})
				+
				\underbracket[0.5pt]{
					\innprod{\nabla f(x^k)}{x- x^{k+1}}
				}_{\text{(A)}}
				-
				\tD_g(x,x^{k+1})
			\\
			&
				+
				\tfrac{1}{\gamma_{k+1}}\D_{\kernel}(x,x^k)
				-
				\tfrac{1}{\gamma_{k+1}}\D_{\kernel}(x,x^{k+1})
				-
				\tfrac{1}{\gamma_{k+1}}\D_{\kernel}(x^{k+1},x^k).
			\label{eq:main}
			\end{align}
			We next proceed to expand the term (A) as
			\begin{align*}
				\text{(A)}
			={} &
				\innprod{\nabla f(x^k)}{x-x^k}
				-
				\innprod{\nabla f(x^k)}{x^{k+1}-x^k}
			\\
			={} &
				\innprod{\nabla f(x^k)}{x-x^k}
				+
				\tfrac{1}{\gamma_k}
				\innprod{H_k(x^{k-1})-\nabla\kernel(x^k)}{x^{k+1}-x^k}
			\\
			&
				+
				\tfrac{1}{\gamma_k}
				\innprod{H_k(x^{k-1})-H_k(x^k)}{x^k-x^{k+1}}
			\\
			={} &
				\Bigl[
					f(x)-f(x^k)-\D_f(x,x^k)
				\Bigr]
				+
				\Bigl[
					g(x^{k+1})-g(x^k)-\tD_g(x^{k+1},x^k)
				\Bigr]
			\\
			&
				+
				\tfrac{1}{\gamma_{k+1}}
				B_{k+1},
			\end{align*}
			which combined with \eqref{eq:main} gives
			\begin{align*}
				0
			={} &
				-P_k(x)
				+
				\tfrac{1}{\gamma_{k+1}}\D_{\kernel}(x,x^k)
				-
				\tfrac{1}{\gamma_{k+1}}\D_{\kernel}(x,x^{k+1})
				-
				\tfrac{1}{\gamma_{k+1}}\D_{\kernel}(x^{k+1},x^k)
			\\
			&
				-\D_f(x,x^k)
				-\tD_g(x^{k+1},x^k)
				+
				\tfrac{1}{\gamma_{k+1}}
				B_{k+1}
				-
				\tD_g(x,x^{k+1}).
			\end{align*}
			We may now add \eqref{eq:main} to \eqref{eq:thetk} scaled by \(\vartheta_{k+1}\) and multiply everything by \(\gamma_{k+1}\) to obtain
			\begin{align*}
				0
			={} &
				-\gamma_{k+1} P_k(x)
				+
				\D_{\kernel}(x,x^k)
				-
				\D_{\kernel}(x,x^{k+1})
				-
				\D_{\kernel}(x^{k+1},x^k)
			\\
			&
				-
				\gamma_{k+1}\D_f(x,x^k)
				-
				\gamma_{k+1}\tD_g(x^{k+1},x^k)
				+
				B_{k+1}
				-
				\gamma_{k+1}\tD_g(x,x^{k+1})
			\\
			&
				+
				\gamma_{k+1}\vartheta_{k+1}\left(
					P_{k-1}(x)
					-
					P_k(x)
					-
					\tfrac{1-\gamma_k\ell_k}{\gamma_k}
					\DD_{\kernel}(x^k,x^{k-1})
					-
					\tD_\varphi(x^{k-1},x^k)
				\right).
			\end{align*}
			After suitably rearranging, the claimed identity is obtained.

			The inequality follows by neglecting the terms between curly brackets, and further using the identity \(\D_{\kernel}(x^k,x^{k-1})=\frac{\alpha_k}{1+\alpha_k}\DD_{\kernel}(x^k,x^{k-1})\).
		\end{proof}

		\phantomsection
		\addcontentsline{toc}{subsection}{Proof of Lemma \ref*{thm:descent}}%
		\label{proof:thm:descent}%
		\begin{proof}[\textbf{Proof of \cref{thm:descent}}]%
			It suffices to prove the claim for \refBY; the case of \refFNEa{} under the needed assumptions follows by simply replacing \(\U_k\gets\U^{\alpha}_k\).
			The assumption on \(x\) ensures that \(P_k(x)\geq0\) holds for all \(k\), implying both that \(\U_k(x)\geq0\) and the claimed monotonic decrease of \(\bigl(\U_k(x)\bigr)_{k\in\N}\) with finite limit by virtue of \cref{thm:BY:leq,thm:FNEa:leq}.
			More precisely, one has that
			\[
				0
			\leq
				\U_{k+1}(x)
			\leq
				\U_k(x)
				-
				\gamma_k(1+q\hat\rho_k-q\hat\rho_{k+1}\rho_{k+1})P_{k-1}(x),
			\]
			where \(q=1\) for \refBY{} and \(q=\frac{1+\alpha}{2\alpha}\) for \refFNEa{}.
			A telescoping argument yields that
			\begin{align*}
				P_K^{\min}(x)
				\sum_{k=1}^K
				\gamma_k(1+q\hat\rho_k-q\hat\rho_{k+1}\rho_{k+1})
			\leq{} &
				\sum_{k=1}^K
				\gamma_k(1+q\hat\rho_k-q\hat\rho_{k+1}\rho_{k+1})P_{k-1}(x)\\
			\leq{} &
				\U_1(x) - \U_{K+1}(x)\\
			\leq{} &
				\U_1(x) - \gamma_{K+1}(1+q\hat\rho_{K+1})P_K^{\rm min}(x),
			\end{align*}
			hence that
			\begin{align*}
				P_K^{\min}(x)
			\leq{} &
				\frac{\U_1(x)
					}{
					\sum_{k=1}^K(\gamma_k + q\gamma_k\hat\rho_k- q\gamma_{k+1}\hat\rho_{k+1}) + \gamma_{K+1} + q\gamma_{K+1}\hat\rho_{K+1}
					}\\
			\leq{} &
				\frac{\U_1(x)
					}{
					\gamma_1\hat\rho_1
					+
					\sum_{k=1}^{K+1}\gamma_k
					}.
			\end{align*}
			Since \(\U_1(x)\leq\U_0(x)\), the claimed inequality follows.
		\end{proof}

	\small
	\phantomsection
	\addcontentsline{toc}{section}{References}
	\bibliographystyle{plain}
	\bibliography{references.bib}

\end{document}